\documentclass[psamsfonts]{amsart}

\usepackage{amssymb,amsfonts}
\usepackage[all,arc]{xy}
\usepackage{enumerate}
\usepackage{mathrsfs}

\usepackage{mathtools}
\usepackage[utf8]{inputenc}
\usepackage[T1]{fontenc}
\usepackage{babel}
\usepackage{amsmath}
\usepackage{amsfonts}
\usepackage{amssymb}
\usepackage{amsthm}
\usepackage{verbatim}
\usepackage{todonotes}
\usepackage[caption=false]{subfig}
\usepackage{faktor}
\usepackage{emptypage}

\usepackage{enumerate} 

\usepackage[colorlinks,linkcolor=blue, citecolor=blue,urlcolor=blue]{hyperref} 
\usepackage{color}
\usepackage{blindtext}
\usepackage{soul}

\usepackage{graphicx} 
\usepackage{caption}
\usepackage{multirow}

\usepackage{amssymb,tikz}
\usetikzlibrary{arrows.meta, positioning}

\usepackage{mathrsfs}

\usepackage{amsmath,amssymb,tikz-cd}
\usepackage{adjustbox}

\def\hbgi{H^{*}(BG_i; \mathbb{F}_p)}

\newcommand\exbegin{\begin{center}\begin{minipage}{10cm}\begin{example}\setlength{\parskip}{1.5ex}\small\rm}
\newcommand\exend{\end{example}\end{minipage}\end{center}}

\def\Hom{\operatorname{Hom}}
\def\leng{\operatorname{length}}
\newcommand{\Mod}{\operatorname{Mod}}
\newcommand{\End}{\operatorname{End}}
\newcommand{\Rep}{\operatorname{Rep}}

\newtheorem{Teo}{Theorem}[section]
\newtheorem{Prop}[Teo]{Proposition}
\newtheorem*{Coro.l}{Corollary}
\newtheorem{Coro}{Corollary}[Teo] 
\newtheorem{Lema}[Teo]{Lemma}

\theoremstyle{definition}
\newtheorem{Def}{Definition}[section]
\newtheorem*{Not}{Notation}
\newtheorem*{Obs}{Remark}
\newtheorem{Ex}{Example}[section]

\theoremstyle{remark}

\newcommand{\hocolim}{\operatorname{hocolim}}
\newcommand{\colim}{\operatorname{colim}}
\newcommand{\CP}{\mathcal{P}}
\newcommand{\CQ}{\mathcal{Q}}
\newcommand{\Cat}{\operatorname{Cat}}
\newcommand{\Tr}{\operatorname{Tr}}

\makeatletter
\let\c@equation\c@Teo
\makeatother
\numberwithin{equation}{section}

\title{Cohomological properties of simple complexes of groups}

\author{Roger Bergadà Batlles}

\date{}

\begin{document}

\begin{abstract}

A simple complex of groups is a functor from a poset to the category of groups satisfying certain properties. They were introduced by Bridson and Haefliger in their book \cite{BH} as an abstract way to encode isotropy subgroups of actions on posets. A simplex complex of groups which embeds in a group $K$ has an associated $K$-poset. We will explain how to obtain information about the classifying space of $K$ through the $K$-equivariant homotopy type of the construction of Bridson-Haefliger, and we will also obtain finiteness properties of cohomology.

\end{abstract}

\maketitle

\tableofcontents

\section*{Introduction}

Simple complex of groups give rise to the concept of complexes of groups, which is due to Haefliger and provides a natural generalization of the concept of a graph of groups, introduced by Bass and Serre.

We show that their study can be applied to interesting cases such as Coxeter groups, which form a fundamental class of groups appearing in many areas of combinatorics, geometry, topology, and group theory, as in the theory of Lie algebras or the theory of buildings.

Using the construction of the associated $K$-poset to a simple complex of groups, we obtain descriptions of the corresponding Borel constructions as homotopy co\-limits, allowing us to apply homotopical and spectral sequence techniques to their study. Our motivation, then, is to understand the algebraic topology properties of the group $K$ associated to a simple complex of groups based on these same properties of the Borel construction of the $K$-poset associated to the simple complex of groups.

The main objective of the thesis is to establish finiteness properties for the cohomology of these constructions. More specifically, we prove finite generation results for mod $p$ cohomology and extend these methods to cohomology with other coefficients. The strategy combines several ingredients from algebraic topology and representation theory, including the Bousfield--Kan spectral sequence, Quillen's $F$-isomorphism theorem, and the theory of unstable algebras over the Steenrod algebra. A key technical ingredient is the study of colimits of Noetherian $\mathrm{End}(V)$-sets, which reflect finiteness conditions for unstable algebras, and we do this using some theorems from Henn, Lannes and Schwartz.

\textbf{Outline of this paper:} This paper is organized as follows. Section \ref{Preliminaries}, collects the background material required for the sequel. We review some results of ho\-mo\-to\-py theory and cohomology of groups through Section \ref{prelimhomotopy}. In Section \ref{modulesoversteen}, we recall the necessary facts about the Steenrod algebra and unstable modules and algebras, where the theory of $\mathrm{End}(V)$-sets ($V$ an elementary abelian $p$-group) by Henn-Lannes-Schwartz is introduced, and we prove some results concerning Noetherian $\mathrm{End}(V)$-sets, that will be used repeatedly later. We also introduce the notion and some properties of the simple complexes of groups in Section \ref{simplecomplex}. Then, in Section \ref{Homotopy}, we develop the homotopy theory of simple complexes of groups and establish homotopies for the realization of the associated $K$-poset and for its Borel construction using homotopy colimits. We also describe, in Section \ref{sec:join}, the join construction for simple complexes of groups. We then prove, in Section \ref{C:ModpCohomology}, the main theorem of this paper, which establishes finite generation results for mod $p$ cohomology of the Borel construction via $F$-isomorphisms, using spectral sequence techniques, $\textrm{End}(V)$-sets theory and Steenrod algebra. Firstly, we prove it for simple complexes of groups using a finite poset as the indexing category. We then consider the more general case of an infinite indexing poset, and finally give a brief study of cohomology with $p$-adic coefficients.

\textbf{Acknowledgments:} The author thanks Natàlia Castellana for her guidance and support throughout this work. The author was supported by MICINN grants PID2024-158573NB-I00 and PID2020-116481GB-I00. Part of this article is based on the author's Ph.D. thesis, supported by the FI-SDUR grant 2022-FISDU-00399.

\section{Preliminaries}\label{Preliminaries}

The following sections recall some results of homotopy theory and cohomology of groups, introduce concepts about Steenrod algebra and operations with Noetherian \textrm{End}(V)\text{-sets}. Finally, we introduce relevant definitions about simple complexes of groups.

\subsection{Homotopy theory and cohomology}\label{prelimhomotopy}
The Borel construction plays a cen\-tral role in this paper. 

\begin{Def}\label{clasicborelcons}
Let $X$ be a $G$-space. The \emph{Borel construction} of $X$ is 
\[
X_{hG}:=EG\times_G X,
\]
the orbit space of the diagonal $G$-action.
\end{Def}

Moreover, the projection onto the first factor of the Borel construction with the universal principal $G$-bundle $EG\to BG$ induces the following fibration
\[
X
\longrightarrow
X_{hG}
\longrightarrow
BG,
\]
known as the \emph{Borel fibration}.

Alternatively, every group $G$ can be regarded as a category with a single object, which we denote by $\ast$. The morphisms of the category, $\ast \to \ast$, are precisely the elements of $G$, with composition given by the group operation. Then we can introduce the Borel construction as an homotopy colimit.
\begin{Obs}
Let $X$ be a $G$-space. Consider the functor $F\colon G\longrightarrow \mathbf{Top},$ where $G$ is regarded as a category, $F(\ast)=X$, and each element $g\in G$ is sent to the homeomorphism
\[
F(g):\begin{array}[t]{rcl}
X & \longrightarrow & X\\
x & \longmapsto & gx.
\end{array}
\]
The \emph{Borel construction} of $X$ can also be defined as the homotopy colimit of the functor
$F$ (see \cite[Section 2]{WZ}):
\[
X_{hG}=\operatorname{hocolim}_G F.
\]
\end{Obs}

Let $I$ be an indexing category, and let $\Cat$ be the category of small categories. Let \(F \colon I \to \Cat\) be a functor, then, another important construction will be the \emph{Grothendieck construction} $\Tr(F)$ (see \cite{Thomason}). In particular, we will use the following theorem.

\begin{Teo}[Homotopy colimit theorem {(\cite[Theorem 1.2]{Thomason})}]\label{simeq:hocolimTr}
Let $F\colon I\to \Cat$ be a functor. There is a natural homotopy equivalence
\[
\eta :
\operatorname{hocolim}_I NF
\longrightarrow
N(Tr(F))
\]
of the homotopy colimit of \(NF\) and the nerve of the Grothendieck
construction.
\end{Teo}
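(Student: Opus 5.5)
The plan is to construct $\eta$ explicitly at the level of simplicial sets, and then to show that it is a weak equivalence by comparing both sides to the diagonal of one bisimplicial set. Recall that $\operatorname{hocolim}_I NF$ is the diagonal of the bisimplicial set whose $(p,q)$-simplices are pairs consisting of
\[
i_0\to i_1\to\cdots\to i_p \ \text{in } I \quad\text{and}\quad x_0\to\cdots\to x_q \ \text{in } F(i_0).
\]
A $q$-simplex of the diagonal is such a pair with $p=q$. In $\Tr(F)$ the objects are pairs $(i,x)$ with $x\in F(i)$, and a morphism $(i,x)\to(j,y)$ is a pair $(a\colon i\to j,\ f\colon F(a)x\to y)$.

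The first step is to define $\eta$ on a $q$-simplex $(i_0\to\cdots\to i_q;\ x_0\to\cdots\to x_q)$. We send it to the chain
\[
(i_0,x_0)\to (i_1,F(i_0\to i_1)x_1)\to\cdots\to (i_q,F(i_0\to i_q)x_q)
\]
in $\Tr(F)$, whose morphisms are the images of $x_{k}\to x_{k+1}$ under the relevant functors. We then check that this is compatible with faces and degeneracies. This is routine except for $d_0$, where functoriality $F(b)F(a)=F(ba)$ is exactly what is needed. Naturality in $F$ is then immediate.

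To prove that $\eta$ is a weak equivalence, I would use Quillen's Theorem A in its bisimplicial form. Concretely, we interpret $N\Tr(F)$ as the diagonal of a second bisimplicial set: the two-sided bar construction that realizes the homotopy left Kan extension along the projection $\pi\colon \Tr(F)\to I$. By Quillen, $N\Tr(F)\simeq \operatorname{hocolim}_I N(\pi/i)$, since the nerve of a category is the hocolim over that category of the point.

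The key computation is that each comma category $\pi/i$ deformation retracts onto the fiber $F(i)$. The inclusion $F(i)\to \pi/i$, $x\mapsto ((i,x),\mathrm{id})$, has a left adjoint $((j,y),a\colon j\to i)\mapsto F(a)y$. An adjunction gives a homotopy equivalence of nerves, and these equivalences are natural in $i$ up to coherent natural transformations. Since a pointwise equivalence of diagrams induces an equivalence of homotopy colimits, we get $\operatorname{hocolim}_I NF\simeq \operatorname{hocolim}_I N(\pi/i)\simeq N\Tr(F)$.

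The main obstacle is the last compatibility check: showing that the composite equivalence agrees with (or is homotopic to) the explicit map $\eta$. The adjunctions above are natural in $i$ only strictly in one direction, namely the left adjoints, which commute with $F(i\to i')$. So I would build the comparison using the left adjoints, which give a strict natural transformation $N(\pi/-)\to NF$. Then I would verify that $\eta$, composed with the induced map on homotopy colimits, equals the standard equivalence $\operatorname{hocolim}N(\pi/-)\to N\Tr(F)$ from Theorem A. By two-out-of-three this yields that $\eta$ is a weak equivalence, and hence a homotopy equivalence after realization, since both sides are CW complexes.
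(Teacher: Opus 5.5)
The paper does not prove this statement; it quotes it from Thomason and uses it as a black box, so your proposal has to stand on its own. Most of it does:
\begin{itemize}
\item Your $\eta$ is the right map, and your simplicial check, including the $d_0$ case, is correct.
\item The Theorem A map $\mathrm{pr}\colon\hocolim_I N(\pi/-)\to N\Tr(F)$ is a weak equivalence. It sends a diagonal simplex $(i_0\to\cdots\to i_n;\ (j_0,y_0)\to\cdots\to(j_n,y_n);\ a\colon j_n\to i_0)$ to $(j_0,y_0)\to\cdots\to(j_n,y_n)$, forgetting the $I$-chain. Fixing the $\Tr(F)$-degree $q$ in the underlying bisimplicial set leaves $\coprod_{c_0\to\cdots\to c_q}N(\pi(c_q)/I)$, and each $\pi(c_q)/I$ has an initial object.
\item The functors $L_i((j,y),a)=F(a)y$ are left adjoint to $x\mapsto((i,x),\mathrm{id})$ and form a strict natural transformation $N(\pi/-)\to NF$. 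Hence $L_*$ is a weak equivalence.
\end{itemize}

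The gap is the step you yourself flagged as the main obstacle. You propose to verify that $\eta\circ L_*$ \emph{equals} $\mathrm{pr}$, but it does not. Already on a vertex $v=(i;\,(j,y),a\colon j\to i)$ one has $\mathrm{pr}(v)=(j,y)$ but $\eta L_*(v)=(i,F(a)y)$.

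What is true is that the two maps are homotopic, and the units of your adjunctions supply the homotopy. Take the morphisms $\theta_v=(a,\mathrm{id})\colon(j,y)\to(i,F(a)y)$ of $\Tr(F)$. Consider an edge $(c\colon i_0\to i_1;\ (b,f)\colon(j_0,y_0)\to(j_1,y_1);\ a\colon j_1\to i_0)$, with endpoints $v_0=(i_0;\,(j_0,y_0),ab)$ and $v_1=(i_1;\,(j_1,y_1),ca)$. Both composites $\theta_{v_1}\circ(b,f)$ and $(c,F(ca)f)\circ\theta_{v_0}$ equal $(cab,F(ca)f)$, so the $\theta_v$ are natural along edges.

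Simplicial maps $X\to N\mathcal C$ are the same as functors $\tau_1X\to\mathcal C$, and $\tau_1(X\times\Delta^1)\cong\tau_1X\times[1]$. So this natural transformation is a simplicial homotopy $\mathrm{pr}\simeq\eta\circ L_*$. Hence $\eta\circ L_*$ is a weak equivalence, and two-out-of-three with $L_*$ shows $\eta$ is one. With ``equals'' replaced by this homotopy, your argument is complete.
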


Also, in the cohomological study, we should recall the following theorem that establishes the finite generation of the cohomology ring of a finite group.
\begin{Teo}[Evens--Venkov {(\cite[Corollary 7.4.6]{Evens_1991})}]\label{fgRalg}
Let $G$ be a finite group and let $R$ be a Noetherian ring on which $G$ acts trivially. Then the cohomology ring $H^*(G;R)$ is a finitely generated $R$-algebra.
\end{Teo}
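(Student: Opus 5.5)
The plan follows the classical Evens--Venkov argument: embed $G$ in a unitary group $U(n)$, use that $H^*(BU(n);\mathbb{Z})$ is a polynomial ring, and transfer finiteness down to $H^*(G;R)$ through a fibration and a spectral sequence.

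First, reduce to $R=\mathbb{Z}$. The bar resolution, or any resolution of $\mathbb{Z}$ by finitely generated free $\mathbb{Z}G$-modules (which exists since $G$ is finite), gives a universal coefficient spectral sequence, or more directly a cochain-level comparison, relating $H^*(G;R)$ to $H^*(G;\mathbb{Z})\otimes R$ and $\operatorname{Tor}^{\mathbb{Z}}(H^{*+1}(G;\mathbb{Z}),R)$. A cleaner route is to prove the more general statement: $H^*(G;M)$ is a finitely generated $H^*(G;\mathbb{Z})$-module for every $G$-module $M$ that is finitely generated over a Noetherian ring $R$ with trivial $G$-action. Then $H^*(G;R)$ is a finitely generated module over the image of the finitely generated algebra $R\otimes H^*(G;\mathbb{Z})$, hence a finitely generated $R$-algebra.

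For the main step, embed $G$ in $U(n)$ via the regular representation. Consider the fibration
\[
U(n)/G\longrightarrow BG\longrightarrow BU(n).
\]
The fibre $U(n)/G$ is a compact manifold, so its cohomology with coefficients in the local system induced by $M$ is finitely generated over $R$ and vanishes above $\dim U(n)$. The base has $H^*(BU(n);\mathbb{Z})=\mathbb{Z}[c_1,\dots,c_n]$, which is Noetherian, and $BU(n)$ is simply connected, so the local coefficients on the base are simple. In the Serre spectral sequence, $E_2=H^*(BU(n);H^*(U(n)/G;M))$ is a finitely generated module over $R[c_1,\dots,c_n]$, because $H^*(BU(n))$ is free of finite type. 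Each $E_r$ is a subquotient of $E_2$ as a module over this ring, since the differentials are linear with respect to the $H^*(BU(n))$-action by multiplicativity. By Noetherianity every $E_r$ is finitely generated. The fibre cohomology has bounded degree, so only finitely many differentials are nonzero and $E_\infty=E_r$ for $r$ large. A finite filtration of $H^*(G;M)$ with finitely generated graded pieces then shows that $H^*(G;M)$ is finitely generated over $R[c_1,\dots,c_n]$, which acts through $H^*(BU(n))\to H^*(G;\mathbb{Z})$, that is, through Chern classes of the representation.

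Taking $M=R$, $H^*(G;R)$ is a finitely generated module over the finitely generated $R$-algebra $R[c_1,\dots,c_n]$, and is therefore itself a finitely generated $R$-algebra. The main obstacle is the careful setup of the Serre spectral sequence with local coefficients: one has to verify module-linearity of the differentials over $H^*(BU(n))$ and finite generation of the fibre cohomology with twisted coefficients. I would handle the latter by a finite CW structure on the compact manifold $U(n)/G$, which gives a cellular cochain complex of finitely generated $R$-modules.
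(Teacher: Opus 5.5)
Your argument is correct: it is Venkov's proof. The paper gives no argument of its own here; it only quotes Evens. The classical algebraic proof due to Evens goes differently. It argues by induction on $|G|$ through the Lyndon--Hochschild--Serre spectral sequence of a group extension, and uses the Evens norm (multiplicative transfer) to produce permanent cycles over which the $E_2$-term is a finite module.

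Your route replaces that group-theoretic induction with one topological input, a faithful unitary representation. This buys several things:
\begin{itemize}
\item a sharper and explicit conclusion, namely that $H^*(G;M)$ is finite over the polynomial ring $R[c_1,\dots,c_n]$ of Chern classes;
\item an argument that works verbatim for any compact Lie group;
\item the same shape as the argument the paper later uses in the proof of Theorem \ref{main}, where a Noetherian algebra acts on a spectral sequence with a uniform vanishing line, so that finiteness of $E_2$ passes to the abutment.
\end{itemize}
Evens's approach, in exchange, is purely algebraic, needs no manifold or CW input, and produces the norm map as a tool of independent use.

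Two presentational points.
\begin{enumerate}
\item Your opening reduction paragraph is unnecessary. As written, it also presupposes that $H^*(G;\mathbb{Z})$ is already known to be a finitely generated algebra. Run the main argument with coefficients in $R$ from the start instead. You can do this because $H^*(BU(n);R)\cong R[c_1,\dots,c_n]$, which follows from universal coefficients since $H_*(BU(n);\mathbb{Z})$ is free of finite type.
\item Make the ``finite filtration'' explicit as the filtration by fibre degree: $\Phi_j=\bigoplus_m F^{m-j}H^m$ for $-1\le j\le \dim U(n)$. These are $H^*(BU(n))$-submodules, with $\Phi_{-1}=0$, $\Phi_{\dim U(n)}=H^*$, and $\Phi_j/\Phi_{j-1}\cong E_\infty^{*,j}$. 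This is what makes the extension argument uniform in the total degree, since the usual filtration of $H^m$ has length growing with $m$.
\end{enumerate}
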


\subsection{Modules and algebras over the Steenrod algebra $\mathcal A_p$}\label{modulesoversteen}

In this subsection, we will recall the construction and some important properties of the Steenrod algebra, which will be very useful later in this paper. The material in this subsection is standard and is based primarily on \cite{Schwartz_1994}, see \cite[Section 1]{Schwartz_1994} for an intro\-duc\-tion to the Steenrod algebra $\mathcal{A}_p$. We will mainly use it to provide extra structure on the cohomology of spaces, as is specified by the following theorem, due to Steenrod and Adem. The mod $p$ cohomology $H^*(X; \mathbb{F}_p)$ of a space $X$ will be denoted by $H^*X$.

\begin{Teo}\cite[Theorem 1.1.1]{Schwartz_1994}
For any space $X$, $H^*X$ is in a natural way a graded unstable algebra over $\mathcal{A}_p$.
\end{Teo}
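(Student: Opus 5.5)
The plan is to build the $\mathcal{A}_p$-action from cohomology operations, check the unstable axioms on a universal example, and then deduce the algebra statements from the Cartan formula. Everything should reduce to properties of the Eilenberg--MacLane spaces $K(\mathbb{F}_p,n)$.

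\textbf{Constructing the operations.} I would define $Sq^i$ (for $p=2$) and $P^i$, $\beta$ (for $p$ odd) as natural transformations $H^n(-)\to H^{n+i}(-)$, respectively $H^n(-)\to H^{n+2i(p-1)}(-)$. By representability, $H^n X=[X,K(\mathbb{F}_p,n)]$. By Yoneda, a natural operation of this type is the same as a class in $H^{*}K(\mathbb{F}_p,n)$.

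The concrete construction is Steenrod's. Take a class $u\in H^nX$ and form the external power $u^{\otimes p}$ on $X^p$. It extends equivariantly to the extended power $E\Sigma_p\times_{\Sigma_p}X^p$; one can restrict to $C_p$, or to $C_2$ when $p=2$. Pull this back along the diagonal $BC_p\times X\to E C_p\times_{C_p}X^p$. Using the Künneth formula and $H^*BC_p$, write the result as
\[
\sum_j w_j\otimes D_j(u).
\]
The operations $Sq^i$, $P^i$ and $\beta$ are then read off from the coefficients $D_j(u)$, with the standard normalising constants. Naturality holds automatically, since every step is functorial in $X$.

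\textbf{Verifying the axioms.} Several properties follow from this construction:
\begin{itemize}
\item Stability under suspension.
\item The Cartan formula, because the construction is multiplicative: $(uv)^{\otimes p}$ corresponds to $u^{\otimes p}\,v^{\otimes p}$ after reshuffling, up to a sign.
\item The instability conditions: $Sq^i u=0$ for $i>|u|$ and $Sq^{|u|}u=u^2$; for $p$ odd, $P^iu=0$ for $2i>|u|$ and $P^{|u|/2}u=u^p$. These come from degree reasons on the $w_j$, together with identifying the top term with the $p$-th power through the diagonal.
\end{itemize}
Together these give an unstable $\mathcal{A}_p$-module structure, as soon as the Adem relations are established. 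The Cartan formula plus $Sq^{|u|}u=u^2$ (resp.\ $P^{|u|/2}u=u^p$) is exactly the definition of an unstable $\mathcal{A}_p$-algebra. Naturality in $X$ then gives the "natural way" part of the statement.

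\textbf{Adem relations (main obstacle).} To show that the action factors through the Steenrod algebra $\mathcal{A}_p$, and not just the free algebra on the generators, I need the Adem relations. I would prove them by iterating the construction with $\Sigma_{p^2}\supset C_p\wr C_p$. The double application is invariant under the normaliser action on $BC_p\times BC_p$, which is symmetric under swapping the two factors. Comparing coefficients in Dickson-type invariants then yields the relations. Because the relations are natural, it suffices to check them on a product of one-dimensional classes, using a Cartan-formula reduction to $BC_p^{\,n}$ via the detection of $H^*K(\mathbb{F}_p,n)$ in low degrees.

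This bookkeeping of coefficients is the delicate part. Following the paper's practice, I would cite \cite{Schwartz_1994} for it rather than redo the computation.
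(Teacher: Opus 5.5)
The paper gives no argument for this statement. It imports the theorem from Schwartz's book, where it records the classical theory of Steenrod, Cartan and Adem. Your outline is the standard proof behind that reference: Steenrod's extended-power construction restricted along $BC_p\times X\to EC_p\times_{C_p}X^p$, the Cartan formula and instability read off from it, and the Adem relations from the swap symmetry of the iterated construction for $C_p\wr C_p\subset\Sigma_{p^2}$, with the coefficient bookkeeping cited. So in substance it agrees with the paper, which treats the whole thing as a black box.

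\textbf{A step that fails at odd primes.} You claim it ``suffices to check the relations on a product of one-dimensional classes in $H^*(BC_p^{\,n})$''.
\begin{itemize}
\item For $p=2$ this is fine, since $x_1\cdots x_n\in H^n(BC_2^{\,n})$ detects $H^*K(\mathbb{F}_2,n)$ in the stable range.
\item For odd $p$ it fails. If $|x|=1$, instability gives $P^ix=0$ for all $i\geq 1$. By the Cartan formula, $P^i(x_1\cdots x_n)=0$ for every $i\geq 1$. A relation such as $P^1P^1=2P^2$ is then ``verified'' vacuously, because both sides are zero there.
\end{itemize}
At odd primes you have two ways out:
\begin{itemize}
\item detect with monomials that mix the classes $x_j$ and $y_j=\beta x_j$, with enough factors of each kind; or
\item drop detection and extract the Adem relations directly from the symmetric identity satisfied by the iterated construction. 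This only needs the action on $H^*(BC_p)$, namely $P^ix=0$ for $i\geq 1$, $P^1y=y^p$, and $P^iy=0$ for $i\geq 2$.
\end{itemize}

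\textbf{Two smaller points.} First, you still have to prove $Sq^0=\mathrm{id}$ (resp.\ $P^0=\mathrm{id}$). This is one of the defining relations of $\mathcal{A}_p$, and it does not come for free from the construction: it needs a computation on a low-dimensional test class together with stability. Second, the odd-primary instability condition is $\beta^eP^ix=0$ whenever $e+2i>|x|$. Its extra case $e=1$, $2i=|x|$ does follow from what you list, since $\beta(x^p)=p\,x^{p-1}\beta x=0$, but you should say so.
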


Let $\mathcal U$ (resp. $\mathcal K$) be the category of unstable modules (resp. algebras) over the Steenrod algebra. We recall some standard notions that will be used throughout this paper. 

There are several notions to measure the 'finiteness' of an algebra. The more geometric one is the Krull dimension: the supremum of the lengths of all chains of prime ideals. This notion can be read from the spectrum of homogeneous prime ideals. Another measure is the transcendence degree.

\begin{Def}\label{def:transcendence}
Let $K$ be a graded $\mathbb{F}_p$-algebra. The transcendence degree of $K$, $d(K)$, is the supremum of the cardinalities of finite subsets of algebraically independent homogeneous elements of $K$. The transcendence degree of an unstable algebra $K$ is the transcendence degree of its underlying graded algebra.
\end{Def}

\begin{Obs}
    If $K$ is Noetherian, then both notions agree, that is $d(K)$ is the Krull dimension. But in general, the Krull dimensions is less than or equal to the transcendence degree.
\end{Obs}

A fundamental result in the study of the mod $p$ cohomology of groups is Quillen's $F$-isomorphism theorem (see \cite[Theorem 6.2]{quillen}). This theorem shows that the cohomology of a finite group can be detected, up to nilpotent information, by restricting to its elementary abelian $p$-subgroups. 

\begin{Teo}[Quillen's $F$-isomorphism theorem]\label{quillen:fiso}
Let $G$ be a finite group and let $\mathcal A_p(G)$ denote the category of elementary abelian $p$-subgroups of $G$, and morphisms induced by conjugation. Then the homomorphism
\[
H^*(BG;\mathbb F_p)\longrightarrow
\varprojlim_{E\in \mathcal A_p(G)} H^*(BE;\mathbb F_p)
\]
is an $F$-isomorphism.
\end{Teo}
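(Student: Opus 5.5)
Quillen's $F$-isomorphism theorem is a classical result, and the plan is to follow Quillen's original strategy. An $F$-isomorphism means: the kernel consists of nilpotent elements, and for every $x$ in the target some $p$-power $x^{p^n}$ lies in the image. The two halves will be treated separately. Throughout we write $R_G$ for the map $H^*(BG)\to\varprojlim_{E\in\mathcal A_p(G)}H^*(BE)$.

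For the kernel, first embed $G$ into a unitary group $U(n)$ via a faithful representation, for instance the regular representation. Then $BG\simeq U(n)/G\times_{U(n)} EU(n)$, that is, $H^*(BG)\cong H^*_{U(n)}(U(n)/G)$. Consider the flag manifold $F=U(n)/T$. The map $H^*_{U(n)}(X)\to H^*_{U(n)}(X\times F)=H^*_T(X)$ is injective, because $H^*(BU(n))\to H^*(BT)$ makes $H^*_T(X)$ a free module over $H^*_{U(n)}(X)$ (Leray--Hirsch). We then apply equivariant cohomology of the finite $T$-CW complex $Y=U(n)/G$, restricted to the $p$-torus $T_p\subset T$ of elements of order $p$. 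The localization theorem shows that a class vanishing on all fixed-point sets $Y^A$, for $A\subset T_p$ elementary abelian, is nilpotent. This uses a Mayer--Vietoris / cell-filtration argument: on each orbit type $T/H$ with $H\cap$ (nontrivial elementary abelian) restricted, the class is nilpotent, and a finite number of cells bounds the nilpotence exponent. Finally, the isotropy subgroups of $T_p$ on $U(n)/G$ are conjugates of subgroups of $G$, so vanishing on all $E\subset G$ forces vanishing on those fixed points. Together these steps show that $\ker R_G$ is nilpotent. A simpler alternative for this step is Quillen--Venkov's lemma: if $u$ restricts to zero on all elementary abelian subgroups, then $u$ is nilpotent. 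This is proved by transfer to a Sylow subgroup and then, inductively via Serre's theorem (products of Bocksteins of classes vanish on non-elementary $p$-groups), on $p$-groups. I would actually follow this route for the kernel, because it only needs Serre's theorem and the Evens--Venkov finite generation (Theorem \ref{fgRalg}).

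For surjectivity up to $p$-th powers, take a compatible family $(x_E)\in\varprojlim H^*(BE)$ of homogeneous degree. The idea is to construct classes via the Evens norm, or equivalently via Chern classes of induced representations. For each $E$, and for suitable large $p^n$, the power $x_E^{p^n}$ is a polynomial in Chern classes of representations of $E$ (modulo nilpotents, $H^*(BE)$ is a polynomial algebra on degree-2 classes). Using the Evens norm map $N_{E}^{G}$ and the double coset formula, $\operatorname{res}_{E'}N_E^G(y)$ is expressed as a product over double cosets $E'gE$ of norms of conjugates of $y$ restricted to $E'\cap {}^gE$. Compatibility of the family, combined with taking $p^n$-th powers so that the nontrivial norms from proper subgroups become $p$-th powers of restrictions, lets us correct by an inductive argument on the poset of elementary abelian subgroups. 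In effect, we build $z\in H^*(BG)$ with $R_G(z)=(x_E^{p^N})$. The inductive correction ranges over the finitely many conjugacy classes of $E$, ordered by decreasing rank; at each stage we subtract norms of already-constructed classes.

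The main obstacle I expect is this second step: controlling the norm's double coset formula so that cross terms disappear after raising to $p$-th powers. For this I would use that in characteristic $p$ the norm from an index-$p^k$ subgroup of an elementary abelian group restricts, up to nilpotents, to a $p^k$-th power times products of Euler classes. A cleaner fallback is to invoke Quillen's stratification via $H^*_{U(n)}$ and the Chern classes of the permutation representation on $U(n)/T$, which makes $H^*(BG)$ finite over the image of $H^*(BU(n))$. Combined with finite generation (Theorem \ref{fgRalg}), this reduces $p$-power surjectivity to a statement about integral elements over a common Noetherian subring, where purely inseparable extension of the spectra gives the $p$-power conclusion.
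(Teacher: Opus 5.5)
\textbf{The paper does not prove this statement.} It quotes it from \cite[Theorem 6.2]{quillen}, so your proposal has to stand on its own.

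\textbf{Kernel half.} This is fine along the Quillen--Venkov route, but you should state the lemma that makes Serre's theorem usable. If $0\neq v\in H^1(P;\mathbb F_p)$, $H=\ker v$ and $\operatorname{res}_H u=0$, then $u^2\in\beta(v)H^*(P)$ for $p$ odd, and $u\in vH^*(P)$ for $p=2$. With this lemma, induction on $|P|$ kills a power of $u$ on each $\ker v_i$, where the $v_i$ are Serre's classes. A further power of $u$ then lies in $\prod_i\beta(v_i)H^*(P)=0$.

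\textbf{The genuine gap is $F$-surjectivity.} The double coset formula for the norm is multiplicative:
\[
\operatorname{res}_E N_E^G(x_E)=x_E^{[N_G(E):E]}\cdot\prod_{EgE,\ g\notin N_G(E)}N^E_{E\cap{}^gE}\bigl(\operatorname{res}^{{}^gE}_{E\cap{}^gE}\,{}^gx_E\bigr).
\]
So the contributions of the proper subgroups $E\cap{}^gE<E$ are extra \emph{factors}. Subtracting previously constructed classes cannot remove them, and a norm of a restriction is not a power of the original class. Already for $E=(\mathbb Z/p)^2$, $D=\ker\psi$, $y=c_1(\psi)$, $x=c_1(\chi)$, the class $N_D^E(\operatorname{res}_Dx)$ is, up to sign, the Euler class of $\operatorname{Ind}_D^E\operatorname{res}_D\chi=\bigoplus_\lambda\chi\psi^\lambda$. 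That is, it equals $\prod_{\lambda\in\mathbb F_p}(x+\lambda y)=x^p-xy^{p-1}$, which differs from $x^p$ by a non-nilpotent term. Replacing $x_E$ by $x_E^{p^n}$ just raises every factor to the $p^n$-th power. So your ``inductive correction'' is never actually defined.

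\textbf{Your fallback does not close the gap.} Finiteness over the image of $H^*(BU(n))$ plus $F$-injectivity only gives lying over, i.e.\ surjectivity on spectra. Given these, $F$-surjectivity is equivalent to the map of spectra being injective with purely inseparable residue field extensions. That is the geometric content of the theorem itself (Quillen's stratification). For example, $\mathbb F_p[t^2]\subset\mathbb F_p[t]$ with $p$ odd is finite and injective, yet no $t^{p^n}$ lies in the subring.

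\textbf{What is missing is Quillen's mechanism, and your first kernel argument is its skeleton.}
\begin{enumerate}
\item The basic principle is that an equalizer of $F$-isomorphisms is an $F$-isomorphism. If $\beta$ lifts $a^{p^n}$, its two images differ by a nilpotent, so some $\beta^{p^m}$ lies in the equalizer.
\item Put $Y=U(n)/G$ and $\mathcal F=U(n)/T$. The isotropy groups $H$ of $T$ on $Y$ and on $Y\times\mathcal F$ are abelian. For them $H^*(BH)\to H^*(BH[p])$, with $H[p]$ the $p$-torsion, is visibly an $F$-isomorphism.
\item Mayer--Vietoris maps $H^*_T(Y_1\cup Y_2)$ onto the equalizer of $H^*_T(Y_1)\times H^*_T(Y_2)\rightrightarrows H^*_T(Y_1\cap Y_2)$ with square-zero kernel. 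The limit over pairs $(A,c)$, with $c$ a component of the $A$-fixed points, turns the same decomposition into an equalizer. Hence induction over $T$-cells gives the $F$-isomorphism for these $T$-actions.
\item Finally, $H^*(BG)=H^*_{U(n)}(Y)\to H^*_T(Y)\rightrightarrows H^*_T(Y\times\mathcal F)$ is an equalizer because $H^*(BT)$ is free over $H^*(BU(n))$. Check that the limits descend in the same way, and identify the $U(n)$-level limit with $\lim_{\mathcal A_p(G)}H^*(BE)$. The principle in step 1 then finishes the proof.
\end{enumerate}
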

Since $H^*(BG;\mathbb F_p)$ is a Noetherian $\mathbb{F}_p$-algebra (Theorem \ref{fgRalg}), Quillen proved the following.

\begin{Coro}
   Let $G$ be a finite group. The Krull dimension of $H^*(BG;\mathbb F_p)$ is the transcendence degree and $d(H^*(BG;\mathbb F_p))$ is the maximum of the ranks of  elementary abelian $p$-subgroups of $G$.
\end{Coro}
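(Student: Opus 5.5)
The plan is to deduce the corollary from Theorem \ref{fgRalg} and Theorem \ref{quillen:fiso}, in two steps.

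\textbf{Step 1: Krull dimension equals transcendence degree.} By Theorem \ref{fgRalg} with $R=\mathbb F_p$, the ring $H^*(BG;\mathbb F_p)$ is a finitely generated graded $\mathbb F_p$-algebra, hence Noetherian. The Remark after Definition \ref{def:transcendence} then says its Krull dimension equals $d(H^*(BG;\mathbb F_p))$. It remains to compute this number.

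\textbf{Step 2: reduce to the limit and compare with one $H^*(BE)$.} Write $L=\varprojlim_{E\in\mathcal A_p(G)} H^*(BE;\mathbb F_p)$. I will use the standard fact that an $F$-isomorphism $\varphi\colon A\to B$ preserves transcendence degree. Indeed, $\ker\varphi$ is nilpotent, and for every $b\in B$ some power $b^{p^n}$ lies in the image of $\varphi$.
\begin{itemize}
\item Algebraically independent homogeneous elements of $A$ stay independent in $B$: a polynomial relation among their images gives a relation with nilpotent value in $A$, and a power of that polynomial is again a nonzero polynomial, giving a contradiction.
\item Conversely, given independent elements $b_i$ of $B$, suitable powers $b_i^{p^n}$ lie in the image, remain independent, and lift to independent elements of $A$.
\end{itemize}
By Theorem \ref{quillen:fiso}, the restriction map $H^*(BG;\mathbb F_p)\to L$ is an $F$-isomorphism, so $d(H^*(BG;\mathbb F_p))=d(L)$.

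\textbf{Step 3: compute $d(L)$ from the ranks.} For an elementary abelian group $E$ of rank $r$, $H^*(BE;\mathbb F_p)$ is a polynomial algebra on $r$ generators, tensored with an exterior algebra when $p$ is odd. Its transcendence degree is therefore $r$. Let $r$ be the maximal rank.

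\begin{itemize}
\item \emph{Upper bound, $d(L)\le r$.} The ring $L$ embeds in the finite product $\prod_E H^*(BE)$. Each factor has transcendence degree at most $r$. Independent elements $x_1,\dots,x_k$ with $k>r$ would satisfy a relation in each factor $H^*(BE)$, since $d(H^*(BE))\le r$. The product of these finitely many relation polynomials is a nonzero polynomial that vanishes on $(x_1,\dots,x_k)$, a contradiction.
\item \emph{Lower bound, $d(L)\ge r$.} Take $E$ of maximal rank $r$. The Dickson invariants of $H^*(BE)$, which form a polynomial algebra on $r$ generators invariant under $GL(E)$, should extend to compatible elements of the limit after raising to $p$-powers. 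A simpler route: restriction $L\to H^*(BE)$ is finite, because $H^*(BE)$ is integral over the image of $H^*(BG)$ by Evens–Venkov/Quillen–Venkov. A finite map cannot decrease transcendence degree below that of the target, so $d(L)\ge d(H^*(BE))=r$.
\end{itemize}

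The main obstacle is this lower bound. I will handle it through the integrality of $H^*(BE)$ over $H^*(BG)$, using that $H^*(BG)\to H^*(BE)$ makes the target a finitely generated module (Venkov). From this, $d(H^*(BG))\ge r$ follows directly, and one can even bypass $L$ for this inequality.
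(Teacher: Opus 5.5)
Your proof is correct and follows the route the paper indicates: Noetherianity from Theorem \ref{fgRalg} gives Krull dimension $=$ transcendence degree, and Quillen's $F$-isomorphism (Theorem \ref{quillen:fiso}) is then used to compute it, with the details the paper leaves to Quillen written out. The one point to flag is that your lower bound needs the module-finiteness form of Evens--Venkov ($H^*(BE;\mathbb F_p)$ is a finite $H^*(BG;\mathbb F_p)$-module via restriction, e.g.\ by Shapiro's lemma), which is stronger than Theorem \ref{fgRalg} as stated, so cite it as such; you can also drop the speculative Dickson-invariant aside.
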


As we can find in \cite{LANNES1989153}, we can define a nilpotent unstable $\mathcal A_p$-module $M$ using Steenrod operations in the absence of a product. The following definition and property will be useful to encode this notion of nilpotence.

\begin{Def}
    Let $M$ be an unstable module over $\mathcal{A}_p$. Let $m\in M$. If $p=2$, define $Sq_0(m)=Sq^{|m|}(m)$. If $p>2$ define $\mathcal{P}_0(m)=\mathcal{P}^{|m|/2}(m)$ if $|m|$ is a multiple of $p$.
\end{Def}

\begin{Prop}\label{sq0}\cite[Section 1.7]{Schwartz_1994}
    Let $M$ be an unstable module over $\mathcal{A}_p$. Let $m\in M$. Then, if $p=2$, $Sq^iSq_0(m)=Sq_0Sq^{|m|/2}(m)$ when $|m|$ is even and $0$ otherwise. If $p>2$, then $\mathcal{P}^i\mathcal P_0(m)=\mathcal{P}_0\mathcal P^{|m|/p}(m)$ when $i$ is a multiple of $p$. 
\end{Prop}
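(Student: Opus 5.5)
The plan is to reduce the identity to a universal example and check it there using the Cartan formula. The key facts are that $Sq_0$ (resp. $\mathcal P_0$) is given by a single Steenrod operation depending only on the degree of $m$, and that every $\mathcal A_p$-linear map commutes with it. So if the identity holds for the generator $\iota_n$ of the free unstable module $F(n)$ on one generator of degree $n=|m|$, it holds for every $m\in M$. To see this, apply the unique map $F(n)\to M$ of unstable modules sending $\iota_n\mapsto m$. (Throughout I read the condition in the statement as "$i$ even", with right-hand side $Sq_0Sq^{i/2}(m)$, which is what the formula must mean; similarly for $p>2$, with the term vanishing when $p\nmid i$.)

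Next, realize $F(n)$ inside the cohomology of a space. For $p=2$ the standard fact (\cite[Section 1.6]{Schwartz_1994}) is that $F(n)$ embeds in $H^*(B(\mathbb Z/2)^n)$ via $\iota_n\mapsto x_1\cdots x_n$. For $p$ odd one uses the analogous embedding into $H^*(B(\mathbb Z/p)^n)$, built from the degree-one classes $u_j$ and the degree-two classes $x_j=\beta u_j$. Since the embedding is injective and $\mathcal A_p$-linear, it suffices to prove the identity for $c=\iota_n$ viewed in $H^*X$, an unstable algebra. There, by instability, $Sq_0(c)=Sq^{|c|}c=c^2$ for $p=2$, and $\mathcal P_0(c)=\mathcal P^{|c|/2}c=c^p$ when $|c|$ is even for $p$ odd.

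Now compute with the Cartan formula. For $p=2$,
\[
Sq^i(c^2)=\sum_{a+b=i}Sq^a c\,Sq^b c=\begin{cases}(Sq^{i/2}c)^2 & i \text{ even},\\ 0 & i\text{ odd},\end{cases}
\]
because the terms with $a\neq b$ occur in pairs and cancel mod $2$. Since $(Sq^{i/2}c)^2=Sq_0Sq^{i/2}(c)$, this gives the claim. For $p$ odd, the iterated Cartan formula gives
\[
\mathcal P^i(c^p)=\sum_{a_1+\cdots+a_p=i}\mathcal P^{a_1}c\cdots\mathcal P^{a_p}c .
\]
The cyclic group $\mathbb Z/p$ acts on the index tuples by rotation, and it acts freely on the non-constant tuples. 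The even-degree classes $\mathcal P^{a}c$ commute, so each non-constant orbit contributes $p$ equal terms, hence $0$ mod $p$. Only the diagonal term survives: $(\mathcal P^{i/p}c)^p=\mathcal P_0\mathcal P^{i/p}(c)$ when $p\mid i$, and $0$ otherwise.

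The main obstacle is the odd-primary bookkeeping. First, one needs the precise convention for $\mathcal P_0$ on odd-degree elements. If it is defined there (via $\beta$ and a $\mathcal P$), I expect the same argument to go through using the universal class in odd degree, but the sign and commutation rules for odd-degree classes have to be checked. Second, one must justify the embedding of $F(n)$ into cohomology, or at least the detection of $\mathcal A_p$-relations on $\iota_n$ by some space. If that is problematic, the fallback is to work directly in $F(n)$. There $\iota_n$ generates freely modulo the excess relations, so the identity is equivalent to an identity in $\mathcal A_p$ modulo the excess ideal, which follows from the Adem relations applied to $Sq^iSq^n$ with $i<2n$ (resp. $\mathcal P^i\mathcal P^{n/2}$).
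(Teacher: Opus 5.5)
The paper gives no argument of its own here; it quotes the result from Schwartz, \S1.7, so your proposal has to stand on its own. Your corrected reading of the garbled statement is right: the parity condition is on $i$, the right-hand side is $Sq_0Sq^{i/2}(m)$, and for odd $p$ one takes $|m|$ even and gets $\mathcal P_0\mathcal P^{i/p}(m)$, or $0$ if $p\nmid i$. For $p=2$ your argument is complete. The embedding $F(n)\hookrightarrow H^*(B(\mathbb Z/2)^n)$, $\iota_n\mapsto x_1\cdots x_n$, is standard, $Sq_0$ becomes squaring there, and the Cartan cancellation is correct. Your odd-primary rotation argument is also correct, but only \emph{once} you are inside an unstable algebra that contains $F(n)$ faithfully.

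That faithful embedding is where the odd-primary route breaks. The natural analogue $\iota_n\mapsto u_1\cdots u_n\in H^*(B(\mathbb Z/p)^n)$ is not injective for $n$ even. Indeed $\mathcal P^{n/2}(u_1\cdots u_n)=(u_1\cdots u_n)^p=0$ since $u_j^2=0$. But $\mathcal P_0\iota_n=\mathcal P^{n/2}\iota_n$ is a basis element of $F(n)$, because $\mathcal P^{n/2}$ is admissible of excess $n$. In that target both sides of the identity are $0$, so nothing is detected.

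No monomial in the $u_j$ and $\beta u_j$ works either. A monomial containing some $u_j$ has vanishing $p$-th power. A monomial built only from the $\beta u_j$ is killed by $\beta$, while $\beta\iota_n\neq0$.

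The right universal example is $H^*(K(\mathbb Z/p,n))\cong U(F(n))$. It contains $F(n)$, and there $\mathcal P_0\iota_n=\iota_n^p$ for $n$ even. Alternatively, your Adem fallback works, and it works directly in any unstable module $M$. For $|m|=2k$ and $a<pk$,
\[
\mathcal P^a\mathcal P^k m=\sum_j(-1)^{a+j}\binom{(p-1)(k-j)-1}{a-pj}\mathcal P^{a+k-j}\mathcal P^j m .
\]
Here $\mathcal P^jm$ has degree $2k+2j(p-1)$, so instability kills every term with $a>pj$. The binomial coefficient kills every term with $a<pj$. The only survivor is $j=a/p$, with coefficient $(-1)^{a+a/p}=1$, and it equals $\mathcal P_0\mathcal P^{a/p}m$. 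For $a\ge pk$ the identity is immediate from the definition of $\mathcal P_0$ and instability. The case $p=2$ is identical, using $\binom{n-j-1}{i-2j}$.

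Finally, your expectation that odd-degree classes behave the same way is false. Take the convention $\mathcal P_0m=\beta\mathcal P^km$ for $|m|=2k+1$ and the class $u\in H^1(B\mathbb Z/p)$. Then $\mathcal P^1\mathcal P_0u=\mathcal P^1\beta u=(\beta u)^p\neq0$, although $p\nmid 1$. So the statement genuinely concerns even degrees only.
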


We also recall the characterization of Noetherian up to $F$-isomorphism by Henn-Lannes-Schwartz \cite{HLSmodulonilpotents}, where as before, $H^*(V)$ denotes the mod $p$ cohomology $H^*(V; \mathbb{F}_p)$ of the elementary abelian $p$-group $V$. 

\begin{Prop}\label{prop:FisoEndViso}
    A morphism $f\colon K\rightarrow L$ of unstable $\mathcal{A}_p$-algebras is an $F$-iso\-mor\-phism if and only if the induced function $$\mathrm{Hom}_\mathcal K(L,H^*(V))\cong \mathrm{Hom}_\mathcal K(K,H^*(V))$$ is a bijection for all elementary abelian $p$-group $V$. 
\end{Prop}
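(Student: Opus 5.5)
We must prove Proposition \ref{prop:FisoEndViso}: a morphism $f\colon K\to L$ in $\mathcal K$ is an $F$-isomorphism if and only if $f^*\colon \Hom_{\mathcal K}(L,H^*(V))\to\Hom_{\mathcal K}(K,H^*(V))$ is a bijection for every elementary abelian $p$-group $V$. Recall that $f$ is an $F$-isomorphism when every element of $\ker f$ is nilpotent and for every $y\in L$ some $y^{p^n}$ lies in the image of $f$. The plan is to factor $f$ as $K\twoheadrightarrow f(K)\hookrightarrow L$ and treat the two conditions separately. The main input is Lannes' theory: the functor $T_V$, left adjoint to $-\otimes H^*(V)$, is exact and commutes with tensor products, and $\Hom_{\mathcal K}(K,H^*(V))\cong\Hom_{\mathcal K}(\mathrm{Fix}\,T_VK... )$. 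More concretely, we use the identification $\Hom_{\mathcal K}(K,H^*(V))=\Hom_{\mathrm{alg}}((T_VK)^0,\mathbb F_p)$.

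\emph{Forward direction.} Assume $f$ is an $F$-isomorphism. Any map $\varphi\colon L\to H^*(V)$ is determined by $\varphi f$: if $\varphi f=\psi f$, then for $y\in L$ we have $\varphi(y)^{p^n}=\psi(y)^{p^n}$. Since $H^*(V)$ is reduced, or more precisely since Frobenius is injective on $H^*(V)$ for $p=2$ and on its even part for $p$ odd (with odd-degree classes handled via $\beta$ and the unstable structure), we get $\varphi(y)=\psi(y)$. This gives injectivity. For surjectivity, given $\chi\colon K\to H^*(V)$, note that $\chi$ kills nilpotents since $H^*(V)$ is reduced in the relevant sense, so $\chi$ factors through $f(K)$. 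It then remains to extend $\chi$ along the inclusion $f(K)\subset L$, where $L$ is purely inseparable of height $n$ over $f(K)$ via $y\mapsto y^{p^n}$. The cleanest route is exactness of $T_V$: $T_V$ preserves $F$-isomorphisms (Lannes), so $(T_Vf)^0$ is an $F$-isomorphism of $p$-Boolean algebras in degree $0$. A $p$-Boolean algebra, in which $x^p=x$, has no nilpotents and Frobenius is the identity on it, so an $F$-isomorphism between $p$-Boolean algebras is an isomorphism. Hence $f^*$ is a bijection on $\Hom_{\mathrm{alg}}(-,\mathbb F_p)$.

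\emph{Converse.} Suppose $f^*$ is bijective for all $V$. By the same identification, $(T_Vf)^0$ induces a bijection on $\mathbb F_p$-points of $p$-Boolean algebras, so it is an isomorphism. We must deduce that $f$ is an $F$-isomorphism, that is, that $\ker f$ and $\operatorname{coker} f$ are nilpotent unstable modules. By exactness of $T_V$, $(T_V\ker f)^0$ and $(T_V\operatorname{coker}f)^0$ vanish for every $V$, using also the structure of $T_V$ on algebras to pass from the algebra level to the module level. Then apply Lannes–Schwartz/Henn–Lannes–Schwartz: an unstable module $M$ is nilpotent if and only if $(T_VM)^0=0$ for all $V$; equivalently, $\Hom_{\mathcal U}(M,H^*(V))=0$ for all $V$, since $H^*(V)$, $V$ varying, cogenerate the reduced modules. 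A nilpotent ideal, with $Sq_0$ nilpotent on each element via Proposition \ref{sq0}, consists of nilpotent elements. A nilpotent cokernel gives $y^{p^n}=Sq_0^n$-type iterate in $\operatorname{im}f$.

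\textbf{Main obstacle.} The hard step is the converse: translating vanishing of $(T_V-)^0$ on kernel and cokernel into nilpotence. This relies on the nontrivial fact that the family $\{H^*(V)\}$ detects nilpotence, proved via injectivity of $H^*(V)$ in $\mathcal U$. I would quote this from \cite{HLSmodulonilpotents}/\cite{Schwartz_1994} rather than reprove it.
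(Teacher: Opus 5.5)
Your argument is correct. The paper gives no proof of this statement and simply quotes it from \cite{HLSmodulonilpotents}, and your route is essentially the standard proof from that source: identify $\Hom_{\mathcal K}(K,H^*(V))$ with the $\mathbb F_p$-points of the $p$-Boolean algebra $(T_VK)^0$, use exactness of $T_V$ (with Stone duality to turn a bijection on points into an isomorphism $(T_Vf)^0$), and then invoke the Henn--Lannes--Schwartz fact that an unstable module $M$ is nilpotent if and only if $(T_VM)^0=0$ for all $V$.

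You should discard the preliminary Frobenius/``reduced'' remarks in the forward direction, because they are false for $p$ odd. Frobenius is not injective even on the even part of $H^*(V)$, since a product of two degree-one classes has vanishing $p$-th power. Likewise, $\chi$ need not kill ring-theoretic nilpotents of $K$; what is true is that it kills $\ker f$, because that kernel is nilpotent as an unstable module. The $T_V$ argument you give afterwards makes these remarks unnecessary.
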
 

\begin{Def}
If $K$ is an unstable $\mathcal{A}_p$-algebra over the Steenrod algebra, then for any elementary abelian $p$-subgroup $V$ of rank $d$, define the $\textrm{End}(V)$-set $$s_d(K):=\mathrm{Hom}_\mathcal K(K,H^*(V)).$$ 
\end{Def}

\begin{Def}
    Let $S$ be an $\textrm{End}(V)$-set. Define  
    $$b_d(S)\colon =\Hom_{\textrm{End}(V)}(S,H^*(V)),$$ which is a graded unstable $\mathbb{F}_p$-algebra over $\mathcal{A}_p$, where the algebraic structure and the action of $\mathcal{A}_p$ come from the target.
\end{Def}

\begin{Def}\label{kerS}
    If $S$ is an $\textrm{End}(V)$-set and $s\in S$, the kernel $\ker (s)\leq V$ is the subgroup uniquely characterized by the following properties:
    \begin{enumerate}
        \item For any $t\in S$ and $\alpha \in \textrm{End}(V)$ such that $s=t\alpha$, $\ker \alpha \subseteq \ker s$.
        \item There are elements $t_0\in S$ and $\alpha_0 \in \textrm{End}(V)$ with $s=t_0\alpha_0$ and $\ker \alpha_0={\ker}(s)$.
        
    \end{enumerate}
\end{Def}

\begin{Def}
    An $\End(V)$-set $S$ is Noetherian if $S$ is finite and $\alpha^{-1}(\ker (s))=\ker(s\alpha)$ for any $s\in S$ and $\alpha \in \textrm{End}(V)$, where $\ker(s)$ is the maximal kernel among all endomorphisms in the orbit of $s$ (see Definition \ref{kerS}).
\end{Def}

The connection between unstable algebras over the Steenrod algebras and finite\-ness conditions on $\textrm{End}(V)$-sets is described in \cite{HLSmodulonilpotents}, and summarized in the following statement.

\begin{Teo}\cite[Theorem 7.1]{HLSmodulonilpotents}\label{b_dTheo}
     If $K$ is a Noetherian unstable $\mathcal{A}_p$-algebra, then $s_d(K)$ is a Noetherian $\End(V)$-set. And if $S$ is a Noetherian $\End(V)$-set, then $b_d(S)$ is a Noetherian unstable $\mathcal{A}_p$-algebra of transcendence degree at most $d$.
\end{Teo}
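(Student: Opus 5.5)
This statement is \cite[Theorem 7.1]{HLSmodulonilpotents}, and the paper quotes it without proof. Here is how I would reconstruct the argument. There are two directions, and each one reduces finiteness on one side of the correspondence to finiteness on the other.

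\textbf{Direction 1: $K$ Noetherian implies $s_d(K)$ Noetherian.} Since $K$ is Noetherian, we can pick finitely many generators $x_1,\dots,x_n$ of $K$. A morphism $K\to H^*(V)$ in $\mathcal K$ is then determined by the images of the $x_i$. By Quillen-type finiteness (compare Theorem \ref{quillen:fiso}), only finitely many such morphisms exist, because each image lies in a finite-dimensional graded piece $H^{|x_i|}(V)$. Hence $s_d(K)$ is finite.

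For the kernel condition, I would identify $\ker(s)$ for $s\colon K\to H^*(V)$ with the kernel of the restriction map. Concretely, $s$ factors as $K\to H^*(V/W)\to H^*(V)$, where $W$ is the largest subgroup through which $s$ factors. This is the subgroup on which the induced "central" action is trivial, via Lannes' $T$-functor: $W$ is the maximal subgroup for which $s$ extends along $V\times W\to V$. For precomposition by $\alpha$, the factorisation through $V/W$ pulls back to a factorisation through $V/\alpha^{-1}W$. The reverse inclusion is the real content. I expect it to need the Noetherian hypothesis, through the finiteness of $T_V K$ and its decomposition into components indexed by $s_d(K)$.

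\textbf{Direction 2: $S$ Noetherian implies $b_d(S)$ Noetherian of degree at most $d$.} First decompose $S$ into finitely many orbits. For each $s$, write $V_s:=V/\ker(s)$. Then $\operatorname{Stab}(s)$ acts through a subgroup $G_s$ of $\operatorname{GL}(V_s)$. The Noetherian condition on kernels should identify $\Hom_{\End(V)}(\End(V)\cdot s,H^*(V))$ with $H^*(V_s)^{G_s}$, possibly modulo a finite amount of gluing. I would then realise $b_d(S)$ as a finite limit (an equaliser) of such invariant rings. Each invariant ring is Noetherian by Noether's theorem, with transcendence degree $\operatorname{rank}V_s\le d$. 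Using the Noetherian condition again, the limit is a subalgebra containing a product of invariant rings of Dickson type, over which it is finite. So it is Noetherian of dimension at most $d$.

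\textbf{Main obstacle.} The hardest step is the exact identification of the kernel and its compatibility $\alpha^{-1}(\ker s)=\ker(s\alpha)$ in Direction 1. It is also this compatibility that makes the limit in Direction 2 finite over a polynomial subalgebra. I would attack both through Lannes' theory of $T_V$ and Proposition \ref{prop:FisoEndViso}.
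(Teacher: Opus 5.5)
The paper gives no proof of this statement; it only quotes \cite[Theorem 7.1]{HLSmodulonilpotents}. Your proposal proves just the formal parts: finiteness of $s_d(K)$, and the inclusion $\alpha^{-1}(\ker s)\subseteq\ker(s\alpha)$, which holds in every $\End(V)$-set. Both substantive steps are left as expectations, so there is a genuine gap in each direction.

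\emph{Direction 1.} The missing step is $\ker(s\alpha)\subseteq\alpha^{-1}(\ker s)$, and ``finiteness of $T_VK$'' does not supply it. What you need is the finiteness characterization of kernels for Noetherian $K$. It says that $s$ factors as $K\xrightarrow{\bar s}H^*(V/\ker s)\xrightarrow{\pi^*}H^*V$, with $H^*(V/\ker s)$ a finitely generated $K$-module via $\bar s$. This is a genuine theorem about Noetherian unstable algebras, not a formal consequence of the definitions, and it is where Noetherianity of $K$ enters. Granting it, the argument is short:
\begin{enumerate}
\item $\pi\alpha$ maps $V$ onto some $V_1\le V/\ker s$ with kernel $\alpha^{-1}(\ker s)$.
\item $H^*V_1$ is still finite over $K$, because restriction $H^*(V/\ker s)\to H^*V_1$ is onto.
\item A finite map $K\to H^*V_1$ cannot factor through $H^*(V_1/E)$ with $E\neq 0$, since the image of $H^*(V_1/E)$ has smaller Krull dimension than $H^*V_1$.
\end{enumerate}
Hence $\ker(s\alpha)=\alpha^{-1}(\ker s)$.

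\emph{Direction 2.} The plan does not work as stated, for three reasons.
\begin{enumerate}
\item $\Hom_{\End(V)}(s\End(V),H^*V)$ is in general a proper subring of $H^*(V_s)^{G_s}$. Relations $s\alpha=s\beta$ need not come from $\mathrm{Stab}(s)$; for $S=\Rep(V,G)$ they encode $G$-fusion between subgroups of $\rho(V)$ that is not induced by $N_G(\rho(V))$.
\item A finite limit of Noetherian rings need not be Noetherian, so applying Noether's theorem to the pieces proves nothing.
\item $b_d(S)$ does not contain a product of Dickson algebras.
\end{enumerate}
What is needed is one Noetherian subalgebra of $b_d(S)$ over which everything is finite. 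Let $D\subseteq H^*(V)^{GL(V)}$ be the Dickson algebra, polynomial on $d$ generators, over which $H^*V$ is finite. For $x\in D$ set $\phi_x(s)=\gamma_s^*x$, where $\gamma_s\in\End(V)$ is any endomorphism with $\ker\gamma_s=\ker s$; this is independent of the choice by $GL(V)$-invariance.
\begin{enumerate}
\item Equivariance: $\alpha^*\phi_x(s)=(\gamma_s\alpha)^*x$, and $\ker(\gamma_s\alpha)=\alpha^{-1}(\ker s)=\ker(s\alpha)$ by the Noetherian condition. So $\phi_x\in b_d(S)$, and $x\mapsto\phi_x$ is a ring map $D\to b_d(S)$.
\item Containment: choose $\gamma_s=\epsilon_s$ idempotent. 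Writing $s=t_0\alpha_0$ with $\ker\alpha_0=\ker s$ gives $\alpha_0\epsilon_s=\alpha_0$, hence $s\epsilon_s=s$. So every element of $b_d(S)$ lies in $\prod_{s\in S}\epsilon_s^*H^*V$.
\item Finiteness: since $S$ is finite, that product is a finite $D$-module.
\end{enumerate}
Thus $b_d(S)$ is finite over the image of $D$, hence Noetherian of transcendence degree at most $d$.
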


We will use the following property.

\begin{Teo}\cite{HLSmodulonilpotents}\label{bdsd:fiso}
    Given an unstable $\mathcal{A}_p$-algebra $K$ of transcendence degree less than $d$, then $K\to b_d(s_d(K))$ is an $F$-isomorphism.
\end{Teo}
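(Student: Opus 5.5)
We need to show that the natural map $\eta_K\colon K\to b_d(s_d(K))$, given by $x\mapsto (\varphi\mapsto \varphi(x))$, is an $F$-isomorphism whenever $d(K)<d$; here $V$ has rank $d$. The plan is to use Proposition \ref{prop:FisoEndViso}. By that criterion, it suffices to prove that for every elementary abelian $p$-group $W$ the restriction map
\[
\eta_K^*\colon \Hom_{\mathcal K}(b_d(s_d(K)),H^*(W))\longrightarrow \Hom_{\mathcal K}(K,H^*(W))
\]
is a bijection. Write $S=s_d(K)=\Hom_{\mathcal K}(K,H^*(V))$. Each $s\in S$ gives an evaluation map $\mathrm{ev}_s\colon b_d(S)\to H^*(V)$, and by construction $\mathrm{ev}_s\circ\eta_K=s$. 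So when $W=V$ the map $\eta_K^*$ is split surjective via $s\mapsto \mathrm{ev}_s$.

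The first step is to reduce an arbitrary $W$ to the case $W=V$. If $\operatorname{rank} W\le d$, write $W$ as a retract of $V$ via $i\colon W\to V$ and $r\colon V\to W$ with $ri=\mathrm{id}$. Then any map $f\colon L\to H^*(W)$ factors as $f=(Bi)^*\circ (Br)^*\circ f$, and $(Br)^* f$ lands in $\Hom_{\mathcal K}(L,H^*(V))$. Hence the bijectivity for $V$, which is natural in $L$ and compatible with the $\End(V)$-action, should give bijectivity for $W$.

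If $\operatorname{rank} W>d$, the transcendence degree hypothesis enters. Every $\varphi\colon K\to H^*(W)$ should factor through $H^*(W')$ for a quotient $W\to W'$ of rank at most $d(K)<d$. The reason is that the image of $\varphi$ has transcendence degree at most $d(K)$, and by Lannes–Henn–Schwartz theory $\varphi$ corresponds to a map of $K$ into $H^*$ of a quotient of $W$ by the kernel of the associated kernel construction. The same factorization holds for $b_d(S)$, since by Theorem \ref{b_dTheo} it has transcendence degree at most $d$, and more precisely at most the maximal rank of $V/\ker(s)$, which is $<d$. Using these factorizations, both Hom-sets are colimits of their values on quotients of rank $<d$, which reduces this case to the previous one. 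I expect this reduction to be the delicate point. If it does not go through directly, I would fall back on working with kernels in the sense of Definition \ref{kerS}.

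The second step, and the main obstacle, is injectivity for $W=V$. Suppose $\psi\colon b_d(S)\to H^*(V)$ is an $\mathcal{A}_p$-algebra map. We must show $\psi=\mathrm{ev}_s$ for $s=\psi\circ\eta_K$. The natural approach is to identify $b_d(S)$ as an equalizer inside $\prod_{s\in S}H^*(V)$, namely the $\End(V)$-equivariant maps $S\to H^*(V)$. Then $\Hom_{\mathcal K}(b_d(S),H^*(V))$ should be computed using Lannes' $T$-functor: $T_V$ is exact and commutes with limits of this type, and $\Hom_{\mathcal K}(H^*(V),H^*(V))=\End(V)$. This would give
\[
\Hom_{\mathcal K}(b_d(S),H^*(V))\cong \bigl(\text{maps of }\End(V)\text{-sets realizing }S\bigr)\cong S,
\]
at least when $S$ is finite. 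Finiteness of $S$ holds under the finite transcendence degree hypothesis, or by passing to Noetherian subalgebras and a colimit argument. The key technical check is that $T_V$ applied to the equalizer yields the "$\End(V)$-set" $S$ back. This amounts to Henn–Lannes–Schwartz's computation, and I would cite or reproduce it rather than rederive it.
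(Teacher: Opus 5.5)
Two parts of your plan are sound. The retract reduction for $\operatorname{rank}W\le d$ works, and so does the $T_V$-computation showing that $s\mapsto\mathrm{ev}_s$ is a bijection $S\to\Hom_{\mathcal K}(b_d(S),H^*(V))$. But the $T_V$-computation only works for finite $S$, because exactness lets $T_V$ commute with finite limits only.

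Your claim that finiteness of $S=s_d(K)$ follows from finite transcendence degree is false. Take $p=2$ and $K=\mathbb F_2\oplus\prod_{m\ge 1}x_m\mathbb F_2[x_m]$, the cohomology of $\bigvee_m B\mathbb Z/2$. Any two homogeneous elements of positive degree are algebraically dependent, so $d(K)=1$. Yet for $\operatorname{rank}V=2$, every ultrafilter $\mathcal U$ on $\mathbb N$ and every $0\neq a\in H^1(V)$ give an element $\varphi_{\mathcal U,a}\in s_2(K)$. It sends $\sum_{m\in A}x_m^k$ to $a^k$ if $A\in\mathcal U$ and to $0$ otherwise.

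Worse, with $b_2$ defined by all equivariant maps, as in the paper, the statement itself fails here. Fix a non-principal $\mathcal U_0$, and let $F$ satisfy $F(\varphi_{\mathcal U_0,a})=a$ for all $a\neq 0$ and $F=0$ at all other points. Then $F$ is an equivariant degree-one element of $b_2(s_2(K))$, but none of its $2^n$-th powers lies in the image of $K$. Indeed, the image of $\sum_{m\in A}x_m^k$ is nonzero exactly at the $\varphi_{\mathcal U,a}$ with $A\in\mathcal U$, and that set of ultrafilters is never $\{\mathcal U_0\}$.

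So your fallback of taking a colimit over Noetherian subalgebras cannot help. You must either assume $s_d(K)$ finite, or work with profinite $\End(V)$-sets and continuous maps as Henn--Lannes--Schwartz do (the paper only cites them). Finiteness does hold in the paper's applications, where $s_d$ is Noetherian by Proposition \ref{PropStep3}.

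The second gap is the case $\operatorname{rank}W>d$. This is the only place where $d(K)<d$ is used, and there the key step is asserted rather than proved. You bound the transcendence degree of $\varphi(K)$ by $d(K)$. Surjectivity instead needs a bound on the rank of the quotient: if $\bar\varphi\colon K\to H^*(U)$ factors through no proper quotient of $U$, then $\operatorname{rank}U<d$. Nothing in your sketch relates $\operatorname{rank}U$ to the transcendence degree of the image, and that relation is the real content of the Henn--Lannes--Schwartz result. Your appeal to Theorem \ref{b_dTheo} for $b_d(S)$ is also unavailable, since $S$ is not known to be Noetherian.

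Injectivity has a separate problem. Knowing that each map factors through some small quotient does not make the Hom-sets colimits over such quotients. Two maps $\psi_1,\psi_2\colon b_d(S)\to H^*(W)$ with $\psi_1\eta_K=\psi_2\eta_K$ may factor through different quotients, and their common refinement can have rank $>d$. The missing ingredient is this: for each $\varphi\colon K\to H^*(W)$, the subgroups $W'\le W$ such that $\varphi$ factors through $H^*(W/W')$ have a largest element $W_0$. This holds because factoring through $W/W'$ is equivalent to $\mathrm{add}^*\varphi=\mathrm{pr}^*\varphi$ as maps $K\to H^*(W'\times W)$, and one checks this condition is closed under sums.

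With that ingredient there is a cleaner route. For finite $S$, the same $T$-functor computation gives $\Hom_{\mathcal K}(b_d(S),H^*(W))\cong S\times_{\End(V)}\Hom(W,V)$ for every $W$, which makes the retract step unnecessary. The theorem then amounts to bijectivity of $[s,\lambda]\mapsto\lambda^*\circ s$ onto $\Hom_{\mathcal K}(K,H^*(W))$. Injectivity follows by routing any two factorizations through $W/W_0$, and surjectivity is exactly the rank bound above.
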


We study colimits and find conditions under which the colimit of Noetherian $\textrm{End}(V)$-sets is again Noetherian. This will allow us to prove that the colimit construction preserves the algebraic finiteness properties of the mod $p$ cohomology of simple complexes of groups.

\begin{Def}Let $X$ and $Y$ be two $\textrm{End}(V)$-sets.
    \begin{enumerate}
    \item An \emph{$\textrm{End}(V)$-map} $f\colon X\to Y$ is a map such that for every $x\in X$ and $\alpha\in \textrm{End}(V)$, we have $f(x\alpha)=f(x)\alpha$.
    \item We say that an $\textrm{End}(V)$-map $f\colon X\to Y$ \emph{preserves kernels} if $\ker(x)=\ker(f(x))$ for any $x\in X$ \cite[4.5]{BK:Kac-Moody_groups}.
    \end{enumerate}
\end{Def}

We state the main result:

\begin{Prop}\label{p:colimNoetherian}
    Let  $\mathcal Q$ be a finite poset. Let $X\colon\mathcal{Q}\to \textrm{End}(V)\text{-sets}$ be a functor such that $X(q)$ is a Noetherian $\End(V)$-set for every $q\in \mathcal{Q}$, and $X(i)\colon X(q)\rightarrow X(q')$ is a morphism of $\End(V)$-sets which preserves kernels for every $i\colon q\to q'$ in $\mathcal{Q}$. Then $\colim_\mathcal{Q} X$ is a Noetherian $\End(V)$-set. Moreover, $X(q)\to\colim_\mathcal{Q} X$ is also a morphism of $\End(V)$-sets which preserves kernels. 
\end{Prop}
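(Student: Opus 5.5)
We construct the colimit explicitly. Write
\[
C:=\colim_\mathcal{Q} X=\Bigl(\coprod_{q\in\mathcal Q} X(q)\Bigr)\Big/\sim,
\]
where $\sim$ is the equivalence relation generated by $x\sim X(i)(x)$ for every morphism $i\colon q\to q'$. The right $\End(V)$-action on each $X(q)$ is compatible with every $X(i)$, so it descends to $C$. Since $\mathcal Q$ is finite and each $X(q)$ is finite, $C$ is finite. The remaining work is the kernel condition. We first show that every canonical map $\iota_q\colon X(q)\to C$ preserves kernels. Noetherianity of $C$ will then follow from that of the $X(q)$.

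\textbf{Step 1: $\iota_q$ preserves kernels.} For any $\End(V)$-map $f$, the definition of kernel (Definition \ref{kerS}) gives $\ker(x)\subseteq\ker(f(x))$. Indeed, writing $x=t_0\alpha_0$ with $\ker\alpha_0=\ker(x)$, we get $f(x)=f(t_0)\alpha_0$, and property (1) for $f(x)$ gives $\ker\alpha_0\subseteq\ker f(x)$. For the reverse inclusion, take $x\in X(q)$ and write $\iota_q(x)=c\,\alpha$ with $c\in C$ and $\ker\alpha=\ker(\iota_q(x))$. Choose a representative $y\in X(q')$ of $c$. Then $\iota_q(x)=\iota_{q'}(y\alpha)$, so $x$ and $y\alpha$ are joined by a zigzag
\[
x=z_0,\ z_1,\ \dots,\ z_n=y\alpha,
\]
where each step is an application of some $X(i)$ or a preimage under one.

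Along a forward step $z_{k+1}=X(i)(z_k)$, kernels are equal by hypothesis. So every element of the zigzag has the same kernel, and
\[
\ker(x)=\ker(y\alpha)\supseteq \ker(y)\cdot\text{-related}\ \supseteq \ker\alpha .
\]
The last inclusion is property (1) applied in $X(q')$. Hence $\ker(\iota_q(x))=\ker\alpha\subseteq\ker(x)$, which gives equality. This proves the ``moreover'' statement, and it also shows that kernels are constant on $\sim$-classes. In particular, $\ker$ is well defined on $C$ and computed in any $X(q)$ containing a representative.

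\textbf{Step 2: the Noetherian condition for $C$.} Let $c\in C$ and $\beta\in\End(V)$, and pick a representative $c=\iota_q(x)$. Using Step 1 for both elements and the Noetherian property of $X(q)$,
\[
\ker(c\beta)=\ker(\iota_q(x\beta))=\ker(x\beta)=\beta^{-1}(\ker x)=\beta^{-1}(\ker c).
\]
This is exactly the required condition $\alpha^{-1}(\ker(s))=\ker(s\alpha)$.

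\textbf{Main obstacle.} The delicate point is Step 1. The kernel of an element of $C$ is defined using factorizations $c=t\alpha$ with $t$ ranging over all of $C$, not just over $\iota_q(X(q))$. The zigzag argument deals with this: every such factorization lifts to some $X(q')$, and along the generating relations kernels are preserved, because they are preserved forward and hence also backward.

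One technical point needs checking: that the whole zigzag can be taken within the set-level colimit. This holds because the $\End(V)$-action commutes with all the $X(i)$, so the equivalence relation defining the colimit of sets is automatically $\End(V)$-stable. Consequently the colimit of $\End(V)$-sets coincides with the colimit of the underlying sets, and no further identifications appear.
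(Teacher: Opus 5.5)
Your proof is correct and follows essentially the paper's route. In both, one shows that the kernel of a class in the colimit equals the kernel of any representative; your lifting of the factorization $\iota_q(x)=c\alpha$ to the representative $y\alpha$ is exactly the content of Lemma \ref{kercolim}. The Noetherian identity $\beta^{-1}(\ker x)=\ker(x\beta)$ is then transported from $X(q)$ to the colimit. Your zigzag argument is actually more careful than the paper's claim that any two identified representatives are images of a common element, which fails for instance over a cospan $a<c>b$. The garbled middle term in your display should simply read $\ker(y\alpha)\supseteq\ker\alpha$, by property (1) in $X(q')$.
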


Before proving Proposition \ref{p:colimNoetherian}, we establish a technical lemma describing the kernel of an element in the colimit. The result shows that the kernel of a class in $\operatorname*{colim}_{Q} X$ can be recovered from the maximal kernels of its representatives in the different $\mathrm{End}(V)$-sets appearing in the diagram.

\begin{Lema}\label{kercolim}
     Let $X\colon\mathcal{Q}\to \textrm{End}(V)\text{-sets}$ be a functor. Consider $\bar{x}\in\colim_\mathcal{Q} X$ with $x\in X(q)$. Then $$\ker (\bar{x})= \textrm{Max}\{\ker(y) \mid \bar y = \bar x, y\in X(q_y), q_y\in \mathcal Q\}.$$
\end{Lema}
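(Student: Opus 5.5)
We argue directly from Definition \ref{kerS}, using the standard description of the colimit of a diagram of $\End(V)$-sets: its underlying set is the colimit of the underlying sets, with the induced right action of $\End(V)$. Thus every element of $\colim_\mathcal{Q} X$ has the form $\bar y$ for some $y\in X(q_y)$, and $\overline{y\alpha}=\bar y\alpha$. The plan is to show that the right-hand side $M:=\textrm{Max}\{\ker(y)\mid \bar y=\bar x\}$ exists, i.e.\ that the set of kernels of representatives has a largest element, and that this largest element satisfies properties (1) and (2) of Definition \ref{kerS} for $\bar x$. By uniqueness of the kernel, $\ker(\bar x)=M$.

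\textbf{Step 1: every representative kernel lies in $\ker(\bar x)$.} Let $y\in X(q_y)$ with $\bar y=\bar x$. By property (2) for $y$ in $X(q_y)$, there are $t_0\in X(q_y)$ and $\alpha_0$ with $y=t_0\alpha_0$ and $\ker\alpha_0=\ker(y)$. Passing to the colimit gives $\bar x=\bar t_0\alpha_0$. Property (1) for $\bar x$ then yields $\ker(y)=\ker\alpha_0\subseteq\ker(\bar x)$.

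\textbf{Step 2: $\ker(\bar x)$ is itself a representative kernel.} By property (2) for $\bar x$, there exist $\bar t\in\colim X$ and $\alpha_0$ with $\bar x=\bar t\alpha_0$ and $\ker\alpha_0=\ker(\bar x)$. Choose a representative $t\in X(q_t)$ of $\bar t$. Then $y:=t\alpha_0\in X(q_t)$ satisfies $\bar y=\bar x$. By property (1) for $y$ in $X(q_t)$, we get $\ker(\bar x)=\ker\alpha_0\subseteq\ker(y)$. Combined with Step 1, this gives $\ker(y)=\ker(\bar x)$. So $\ker(\bar x)$ is attained by a representative and bounds all the others; hence the maximum exists and equals $\ker(\bar x)$.

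\textbf{Main obstacle.} This argument presupposes that $\ker(\bar x)$ is well defined in the colimit, i.e.\ that the subgroup characterized by (1) and (2) exists there. If existence is not taken for granted, one instead defines $M$ directly and verifies (1) and (2) for it.
\begin{itemize}
\item \emph{Property (2) for $M$.} Take $y$ attaining the maximum (the kernels are subgroups of the finite group $V$, and under the finiteness assumptions of Proposition \ref{p:colimNoetherian} a largest one must be shown to exist). Use the decomposition $y=t_0\alpha_0$ in $X(q_y)$ and push it to the colimit.
\item \emph{Property (1) for $M$.} Given $\bar x=\bar t\alpha$, choose a representative $t$ of $\bar t$. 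Then $t\alpha$ represents $\bar x$, and $\ker\alpha\subseteq\ker(t\alpha)\subseteq M$.
\end{itemize}
The delicate point is that the maximum is genuinely a maximum and not merely a maximal element. Step 2 settles this once $\ker(\bar x)$ exists. Otherwise, I would show that two representatives with kernels $K_1$ and $K_2$ yield a representative whose kernel contains $K_1+K_2$. The approach would be to use the existence of kernels in each $X(q)$, which is guaranteed by the structure theory of \cite{HLSmodulonilpotents}.
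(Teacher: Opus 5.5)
Your proof is correct and is the same double-inclusion argument the paper states in one line, now written out. Step 1 shows that every representative kernel lies in $\ker(\bar x)$, and Step 2 shows that $\ker(\bar x)$ is itself the kernel of a representative, so the maximum exists and equals $\ker(\bar x)$. The existence worry in your last paragraph is unnecessary: Definition \ref{kerS} is meant for arbitrary $\End(V)$-sets, and kernels always exist there. If $s=t_1\gamma_1=t_2\gamma_2$, take an idempotent $\delta$ with $\ker\delta=\ker\gamma_1$ and $\delta(\ker\gamma_2)\subseteq\ker\gamma_2$; then $s=s\delta=t_2(\gamma_2\delta)$ with $\ker(\gamma_2\delta)\supseteq\ker\gamma_1+\ker\gamma_2$.
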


\begin{proof}
    We check it by double inclusion.
\end{proof}

\begin{proof}[Proof of Proposition \ref{p:colimNoetherian}]

We need to prove that for every $\bar{x}\in\colim_\mathcal{Q} X$, and $\alpha\in \textrm{End}(V)$, we have that 
\begin{equation}\label{noeth}
    \alpha^{-1}(\ker( \bar{x}))=\ker(\bar{x}\alpha).
\end{equation}
By Lemma \ref{kercolim}, $$\ker (\bar{x})= \textrm{Max}\{\ker(y) \mid \bar y = \bar x, y\in X(q_y), q_y\in \mathcal Q\}.$$

Furthermore, as $\mathcal{Q}$ is a finite poset, for every $x, y$ such that $\bar x=\bar y$, it exists $t\in X(q_t)$ such that $x=i_1(t)$ and $y=i_2(t)$. Now, as for every $q,q'\in \mathcal{Q}$, the morphism $i\colon X(q)\to X(q')$ preserves kernels, we have $\ker (y)=\ker(x).$ So for every $\bar{x}\in\colim_\mathcal{Q} X$, $\ker (\bar{x})=\ker(x)$. 

Now, we assume that $X(q)$ is a Noetherian $\End(V)$-set, so for any element $z\in X(q)$ and $\alpha \in \textrm{End}(V)$, we have $\alpha^{-1}(\ker (z))=\ker(z\alpha)$. Thus to prove \ref{noeth}, we just need to show that $$\alpha^{-1}(\ker( \bar{x}))=\alpha^{-1}(\ker(x))=\ker(x\alpha)=\ker(\overline{x\alpha})=\ker(\bar{x}\alpha).$$

The colimit can be described as $$\colim_\mathcal{Q} X := \bigsqcup_{q\in \mathcal{Q}} X(q)/\sim$$ where $(x,q)\sim (i^*(x),q')$ with $q\leq^i q'$, $x\in X(q)$ and $i^*(x)\in X(q')$.

Now, we know this is a Noetherian $\End(V)$-set,
and for every $q\in\mathcal{Q}$, we will see that the inclusion $\bar i\colon X(q)\to\colim_\mathcal{Q} X$ preserves kernels (i. e. $\ker(x)=\ker(\bar i(x))=\ker(\bar x)$ for any $x\in X(q)$). But by the Lemma \ref{kercolim},  $$\ker (\bar{x})= Max\{\ker(y) \mid \bar y = \bar x, y\in X(q_y), q_y\in \mathcal Q\},$$ so it's trivial that $\ker(x)\subseteq \ker (\bar{x})$.

But as we proved, since $\mathcal{Q}$ is a finite poset, for every $x\in X(q)$, we have $\ker(x)=\ker(\bar{x})$.
\end{proof}

\subsection{Simple complexes of groups}\label{simplecomplex}
In this section, we review the definition of simple complexes of groups and develop some basic constructions that will be used later. We review the definitions from \cite[II.12]{BH}.

Given a poset $\CP$, we can construct a small category whose objects are the elements in $\CP$, and there is a single morphism $p\to q$ if and only if $p\le q$. We abuse notation and we denote by $\CP$ also the poset seen as a category.

\begin{Def}\label{lengposet}
The \emph{length} of a poset is the size of its longest chain minus 1. Thus, $$\leng(\CP)=\sup\{|C|-1\mid C\subseteq\CP \text{ is a chain}\}.$$
\end{Def}

\begin{Def}\label{d:simplecomplexgroups}
Let $\mathcal Q$ be a connected poset, a \emph{simple complex of groups} is a functor $G\colon \mathcal Q\rightarrow \textrm{InjGroups}$, where $\textrm{InjGroups}$ is the category of groups and injective morphisms.  Two systems are isomorphic if there is a natural transformation that is an object-wise isomorphism.  
\end{Def}

\begin{Def}\label{associated}
Let $G\colon \mathcal Q\rightarrow \textrm{InjGroups}$ be a simple complex of groups, the associated group is $\hat{G}$ defined as the direct limit (colimit) of the functor, $\hat{G}=\colim_{\mathcal Q}G$. 
\end{Def}

\begin{Obs}
    The morphisms $\iota_q \colon G(q)\rightarrow \hat{G}$ are not necessarily injective.
\end{Obs}

The following example shows that, even when the canonical morphisms from the local groups to the associated group are injective, there may still exist non-injective homomorphisms from the associated group whose restrictions to all local groups remain injective.

\begin{Ex}
    Let $\mathcal{Q}=\{b>a<c\}$, and $F:\mathcal Q\rightarrow \textrm{InjGroups}$ given by $\{\mathbb Z \leftarrow 0 \rightarrow \mathbb Z\}$. Then $\hat{G}=\mathbb Z*\mathbb Z$. The abelianization morphism $\phi\colon \mathbb Z*\mathbb Z \rightarrow \mathbb Z\oplus\mathbb Z$ is not injective but it satisfies that the restriction to each local group is injective.
\end{Ex}

Let $G\colon \mathcal Q\rightarrow \textrm{InjGroups}$ be a simple complex of groups, and $\varphi\colon \hat{G}\rightarrow K$ a group homomorphism such that the restriction to $G(q)$ is injective for each $q\in \mathcal Q$, that is, $\varphi|_{G(q)}\coloneqq\varphi\circ \iota_{q} \colon G(q)\rightarrow K$ is a monomorphism. In this case, we abuse notation and consider $G(q)$ identified with $\varphi(G(q))\subset K$.

Coxeter groups form a fundamental class of groups appearing in many areas of geometry, topology, and group theory, and their rich combinatorial structure makes them particularly well suited for constructions involving posets and group actions.

\begin{Ex}\label{ex:coxeter}
    Let $W$ be a Coxeter group. Let $S=\{r_1,\dots , r_n\}$ be the set that generates $W$ (with some relations). Given $T\subset S$, let $W_T$ be the Coxeter subgroup generated by $T$ with the corresponding relations. Then the homomorphism $W_T \to W$ induced by the inclusion $T \xhookrightarrow{} S$ is injective. Thus, we can consider $\mathcal{Q}$ to be the poset of proper subsets of $S$ ordered by inclusion. Then, we define a corresponding simple complex of groups $$W\colon \mathcal{Q}\rightarrow \textrm{InjGroups},$$ where for every $T\subset S$, $W(T)=W_T$. This is called the Coxeter complex of $W$.
\end{Ex}

\section{Homotopy theory of simple complexes of groups}\label{Homotopy}

In this section, we study simple complexes groups from a homotopical theoretic point of view by describing the homotopy orbit space as a new poset and as a homotopy colimit.

\subsection{The equivariant homotopy type of the associated poset}\label{SectionHomotopy}

\begin{Def}\label{def:associatedGspace}
 Let $\varphi\colon \hat{G}\rightarrow K$ be a group homomorphism such that the restriction to $G(q)$ is injective for each $q\in \mathcal Q$. We define a $K$-poset $\mathcal P=\bigsqcup_{\mathcal Q} K/G(q)$ with partial ordering $(kG(q),q)<(k'G(r),r)$ if and only if $q<r$ and $k^{-1}k'\in G(r)$ and an action of $K$ via $k\cdot (gG(q),q)=(kgG(q),q)$. 
\end{Def}

Using this definition, we can state the main result of this section, which studies the homotopy of the geometrical realization of the poset $\CP$ we just defined and its Borel construction. We find homotopy equivalences between them and homotopy colimits of functors. These equivalences will help us, in the following section, to study the cohomology of $BK$ through the cohomology of the Borel construction of $\CP$ and $G(q)\subset K$.

\begin{Teo}\label{mainHomotopy}
Let $G\colon \mathcal Q\rightarrow \textrm{InjGroups}$ be a simple complex of groups, and $\mathcal P$ the associated poset to $\varphi\colon \hat{G}\rightarrow K$. There is a functor $F\colon \mathcal Q\rightarrow \Cat$ such that $$|\mathcal P|\simeq \hocolim_{\mathcal Q} |F|$$ with $|F(q)|\simeq K/G(q)$. Moreover, there is a fibration 
    $$|\mathcal P|\rightarrow |\mathcal P|_{hK}\rightarrow BK$$
    where $|\mathcal P|_{hK}\simeq \hocolim_{\mathcal{Q}^{op}} EK\times_K F(q)$, with $EK\times_K F(q)\simeq BG(q)$.
\end{Teo}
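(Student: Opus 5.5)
The plan is to realise $\mathcal P$ as the Grothendieck construction of a suitable functor and then apply Thomason's theorem (Theorem \ref{simeq:hocolimTr}). Because the order relation $(kG(q),q)<(k'G(r),r)$ requires $q<r$ and $kG(q)\subseteq k'G(r)$, there is a canonical $K$-equivariant projection $\pi\colon\mathcal P\to\mathcal Q$, $(kG(q),q)\mapsto q$, with discrete fibres $K/G(q)$. Whenever $q\le r$ and $G(q)\le G(r)$ in $K$, the projections $K/G(q)\to K/G(r)$, $kG(q)\mapsto kG(r)$, are well defined. I will therefore take $F\colon \mathcal Q\to\Cat$ with $F(q)=K/G(q)$ viewed as a discrete category, and $F(q\le r)$ this projection. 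Functoriality follows from the fact that $G$ is a functor into subgroups of $K$ via $\varphi$.

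With this choice, the Grothendieck construction $\Tr(F)$ has objects $(q,kG(q))$. A morphism $(q,kG(q))\to(r,k'G(r))$ is a relation $q\le r$ together with a morphism $F(q\le r)(kG(q))\to k'G(r)$ in the discrete category, that is, an equality $kG(r)=k'G(r)$, or equivalently $k^{-1}k'\in G(r)$. So $\Tr(F)$ is a poset and it coincides with $\mathcal P$ (taking reflexive closure of the strict order). Theorem \ref{simeq:hocolimTr} then gives $\hocolim_{\mathcal Q}NF\simeq N\mathcal P$, and hence $|\mathcal P|\simeq\hocolim_{\mathcal Q}|F|$ with $|F(q)|=K/G(q)$. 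One variance caveat needs attention: Thomason's Grothendieck construction for covariant $F$ uses maps $F(i)(a)\to b$, which matches the description above. If the conventions differ, I would instead work with $\mathcal Q^{op}$, which explains the $\mathcal Q^{op}$ appearing in the second part.

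For the second part, the Borel fibration $|\mathcal P|\to|\mathcal P|_{hK}\to BK$ is the general one recalled after Definition \ref{clasicborelcons}, applied to the $K$-space $|\mathcal P|$. To identify $|\mathcal P|_{hK}$, I note that all the constructions above are $K$-equivariant: $K$ acts on each $F(q)$ by left multiplication, and the maps $F(q\le r)$ are $K$-maps. Consequently Thomason's equivalence $\eta$ is natural, and in particular $K$-equivariant, since $K$ acts through automorphisms of the diagram. Applying $EK\times_K-$, which commutes with homotopy colimits because it is itself a colimit construction, yields $|\mathcal P|_{hK}\simeq\hocolim\,EK\times_K|F(q)|$. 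Finally, $EK\times_K K/G(q)=EK/G(q)$ is a model for $BG(q)$, since $G(q)\le K$ acts freely on the contractible space $EK$. The indexing category appears as $\mathcal Q$ or $\mathcal Q^{op}$ depending on the bar/hocolim convention used in the statement.

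The main obstacle is the equivariance step: checking that $\eta$ is a $K$-map, or at least a $K$-equivalence after Borel construction, and that $EK\times_K-$ commutes with $\hocolim$. To handle this I would work simplicially. The nerve $N\mathcal P$ and the simplicial replacement of $NF$ carry degreewise free-compatible $K$-actions, so I would apply $EK\times_K-$ levelwise and use that realisation commutes with colimits. An alternative is to view the whole setup as a functor $K\times\mathcal Q\to\Cat$, or to use the Grothendieck construction over $K\ltimes\mathcal Q$, and apply Thomason's theorem once more. This gives $|\mathcal P|_{hK}\simeq|\Tr(K\ltimes F)|\simeq\hocolim_{\mathcal Q}BG(q)$ directly, since the translation category $K\ltimes K/G(q)$ is equivalent to $G(q)$.
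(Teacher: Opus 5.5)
Your proposal is correct and follows essentially the paper's route: Thomason's theorem identifies $\mathcal P$ with the Grothendieck construction of $q\mapsto K/G(q)$, and the Borel construction is handled by commuting $EK\times_K-$ past the homotopy colimit and using $EK\times_K K/G(q)\cong EK/G(q)\simeq BG(q)$. Using the discrete categories $K/G(q)$ instead of the paper's $\mathcal P^K_{G(q)}$ gives $\Tr(F)\cong\mathcal P$ on the nose, and your equivariance argument for $\eta$ (or the single Grothendieck construction over $K$, using $K\ltimes K/G(q)\simeq G(q)$) makes precise what the paper compresses into ``homotopy colimits commute''; note only that with the paper's convention ($p\to q$ iff $p\le q$) these diagrams are covariant, so the homotopy colimit is naturally over $\mathcal Q$ in both parts, and the $\mathcal Q^{op}$ in the statement does not come from Thomason's conventions.
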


To prove this main theorem, we first establish some technical lemmas.

\begin{Lema}
 Let $\mathcal P$ be as in Definition \ref{def:associatedGspace}. There is a bijection between the connected components of $\mathcal P$ and the cosets $K/Im(\varphi)$.
\end{Lema}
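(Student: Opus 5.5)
I will construct an explicit map from the set of elements of $\mathcal P$ to $K/\operatorname{Im}(\varphi)$. I will then show two things: comparable elements have the same image, and the induced map on connected components is a bijection. Here $\operatorname{Im}(\varphi)$ is the subgroup of $K$ generated by the images of all $G(q)$. Because $\hat G=\colim_{\mathcal Q}G$ is generated by the images $\iota_q(G(q))$, we have $\operatorname{Im}(\varphi)=\langle G(q)\mid q\in\mathcal Q\rangle\subseteq K$.

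Define $\Phi\colon \mathcal P\to K/\operatorname{Im}(\varphi)$ by $\Phi(kG(q),q)=k\operatorname{Im}(\varphi)$. This is well defined because $G(q)\subseteq \operatorname{Im}(\varphi)$. Suppose $(kG(q),q)<(k'G(r),r)$. By Definition \ref{def:associatedGspace} this means $k^{-1}k'\in G(r)\subseteq\operatorname{Im}(\varphi)$, so $k\operatorname{Im}(\varphi)=k'\operatorname{Im}(\varphi)$. Hence $\Phi$ is constant on connected components and descends to a map $\pi_0(\mathcal P)\to K/\operatorname{Im}(\varphi)$. Surjectivity is immediate: the class $k\operatorname{Im}(\varphi)$ is the image of $(kG(q_0),q_0)$ for any fixed $q_0$.

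For injectivity, first fix $k$. I claim that all elements $(kG(q),q)$, $q\in\mathcal Q$, lie in one component. Since $\mathcal Q$ is connected, any two $q,r$ are joined by a zigzag $q=q_0\lessgtr q_1\lessgtr\cdots\lessgtr q_n=r$. For each step with $q_i<q_{i+1}$, the relation $(kG(q_i),q_i)<(kG(q_{i+1}),q_{i+1})$ holds because $k^{-1}k=1\in G(q_{i+1})$; steps in the other direction are symmetric. Next, for $g\in G(r)$, the elements $(kG(r),r)$ and $(kgG(r),r)$ coincide. So if $q<r$, then $(kgG(q),q)<(kgG(r),r)=(kG(r),r)$, which puts $(kgG(q),q)$ in the same component as $(kG(q),q)$. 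When $q$ is not below $r$, I move to $r$ along the zigzag, apply this step there, and come back. The upshot is that the component of $(kG(q),q)$ is invariant under right multiplication of $k$ by elements of any $G(r)$, and therefore by the group $\operatorname{Im}(\varphi)$ they generate. It follows that if $k\operatorname{Im}(\varphi)=k'\operatorname{Im}(\varphi)$, then $(kG(q),q)$ and $(k'G(q'),q')$ lie in the same component.

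The main obstacle is handling right multiplication by $g\in G(r)$ when $q$ and $r$ are not directly comparable. The fix is the one above: transport to level $r$ via the connectedness zigzag, which keeps the coset representative $k$ unchanged, replace $k$ by $kg$ there, and transport back. This requires that $K$ acts on $\mathcal P$ by poset automorphisms. That is clear from the definition, since left multiplication preserves the condition $k^{-1}k'\in G(r)$. One also needs $\mathcal Q$ nonempty, which the connectedness assumption gives.
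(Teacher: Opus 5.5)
Your proof is correct and is essentially the paper's argument. The slice $\{(kG(q),q)\}_{q\in\mathcal Q}$ is connected because $\mathcal Q$ is; a generator $g\in G(r)$ is absorbed at level $r$ via $(kG(r),r)=(kgG(r),r)$; and your well-defined map $\Phi$ packages the paper's converse observation that consecutive representatives in a zigzag differ by elements of the $G(q_i)$. Your appeal to the $K$-action is not needed, since you proved slice-connectivity for every representative, in particular for $kg$.
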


\begin{proof}
Let $k_0\in K$.  Note that for every $q,r\in \mathcal{Q}$, we have $$(k_0G(q),q)<(k_0G(r),r)\iff q<r,$$ because the condition $k_0^{-1}k_0=e\in G(r)$ is always satisfied. Then $k_0$ determines a subposet $\{(k_0G(q),q) \mid q\in\mathcal{Q}\}$ isomorphic to $\mathcal{Q}$ which is connected, and then all these elements lie in the same connected component of $\mathcal{P}$. 
    
    The proof will be complete if we show that $(k'G(q),q)$ is in this same connected component if and only if $k'=k_0k$ with $k\in Im(\varphi)$. We introduce the following notation $(hG(q),q)\sim (h'G(q'),q')$ if and only if they lie in the same connected component.

     The subgroup $Im(\varphi) < K$ is generated by $\varphi(G(q))\cong G(q)$, for all $q\in\mathcal{Q}$. If $k\in Im(\varphi)$, we write $k=k_1\cdots k_n$ where $k_i\in G(q_i)$. Assume first that $k'=k_0k$ with $k\in Im(\varphi)$. Thus, 
    \begin{align*}
        (k'G(q),q)&=(k_0kG(q),q)=(k_0k_1\cdots k_n G(q),q)\sim(k_0k_1\cdots k_n G(q_n),q_n)=\\
        &=(k_0k_1\cdots k_{n-1} G(q_n),q_n)\sim \dots \sim (k_0k_1 G(q_1),q_1)=\\
        &=(k_0 G(q_1),q_1)\sim (k_0G(q),q)
    \end{align*}
    Now assume that $(k'G(q),q)\sim (k_0G(q),q)$, then there exists $l_1,\dots, l_n \in K$ and $q_1,\dots, q_n \in \mathcal{Q}$ such that

\begin{tikzcd}[column sep=tiny]
                             & {(l_1 G(q_1),q_1)} &                                                     & \dots \arrow[rd, draw=none, ">"{description, sloped} ] &                                    & {(k_0 G(q),q)} \\
{(k'G(q),q)} \arrow[ru, draw=none, "<"{description, sloped}] &                    & {(l_2 G(q_2),q_2)} \arrow[lu, draw=none, ">"{description, sloped}'] \arrow[ru, draw=none, "<"{description, sloped}] & \dots                 & {(l_n G(q_n),q_n)} \arrow[ru, draw=none, "<"{description, sloped}] &               
\end{tikzcd}
    
    and then satisfying $$l_1^{-1}k'\in G(q_1),$$ $$k_0^{-1}l_n\in G(q),$$ and $$l_i^{-1}l_{i-1}\in G(q_i)$$ for every $i\in \{2,\dots,n\}.$

Then, we have 
$$k_0^{-1}k'=(k_0^{-1}l_n) (l_n^{-1}l_{n-1})\cdots (l_1^{-1}k')\in Im(\varphi),$$
since it is a product of elements of $G(q_i)$ for some $q_i\in \mathcal{Q}$.
\end{proof}

\begin{Lema}\label{lem:coset as poset}
    Let $H\subset K$ be a subgroup of $K$. Consider the poset $\mathcal{P}^K_H$ whose objects are $k\in K$, and there is a unique morphism from $k$ to $kh$ for any $h\in H$. Then there is an equivalence of categories between $\mathcal{P}^K_H$ and $K/H$ considered as a category with no non-identity morphisms. In particular, its geometric realization is $|\mathcal{P}^K_H|\simeq K/H$.
\end{Lema}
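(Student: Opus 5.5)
The category $\mathcal{P}^K_H$ is really a preorder rather than a poset: there is a morphism $k\to k'$ exactly when $k^{-1}k'\in H$, so it is reflexive and transitive, and arrows go both ways. The plan is therefore to show that $\mathcal{P}^K_H$ is a disjoint union of indiscrete (chaotic) categories, one for each coset. Once that is in place, we show that the quotient to the discrete category $K/H$ is an equivalence.

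First we check the category structure. The identity at $k$ is the arrow $k\to k\cdot e$. For composition, arrows $k\to kh$ and $kh\to khh'$ compose to the unique arrow $k\to k(hh')$. Since $hh'\in H$, this closure holds because $H$ is a subgroup. Uniqueness of morphisms between any two objects is built into the definition.

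Next, because $H$ is closed under inverses, whenever there is an arrow $k\to kh$ there is also an arrow $kh\to k$, given by $h^{-1}$. Two objects $k,k'$ are then connected by a morphism if and only if $kH=k'H$. Hence each coset $kH$ spans a full subcategory in which every pair of objects has exactly one morphism each way, and there are no morphisms between different cosets.

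We then define the functor $\pi\colon \mathcal{P}^K_H\to K/H$ by $k\mapsto kH$, sending every morphism to an identity. This is well defined because morphisms only join elements of the same coset. The functor $\pi$ is:
\begin{itemize}
\item surjective on objects;
\item full, since the only morphisms in $K/H$ are identities, and each is hit by the identity at any representative;
\item faithful, since hom-sets in $\mathcal{P}^K_H$ have at most one element.
\end{itemize}
So $\pi$ is an equivalence. An explicit quasi-inverse is a section $s$ choosing a representative in each coset; then $\pi s=\mathrm{id}$, and the unique arrows $k\to s(kH)$ assemble into a natural isomorphism $\mathrm{id}\Rightarrow s\pi$. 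Naturality is automatic because hom-sets have at most one element.

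For the realization, an equivalence of categories induces a homotopy equivalence of nerves, because a natural transformation gives a homotopy after realization. Therefore $|\mathcal{P}^K_H|\simeq |K/H|$. The nerve of a discrete category is the discrete set $K/H$, so its realization is $K/H$ as a discrete space.

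The only point needing care is this last step, namely citing that natural transformations realize to homotopies. Alternatively, we can observe directly that each component is the nerve of an indiscrete category on a nonempty set. Such a nerve is contractible, since it is isomorphic to the nerve of a category with a terminal object, namely any chosen representative. Either route gives the equivalence, and the rest of the argument is routine.
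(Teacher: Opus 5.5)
Your proposal is correct and follows essentially the same route as the paper, which also splits $\mathcal{P}^K_H$ into the full subcategories on the cosets $kH$ and notes that each has contractible realization because there is a unique morphism between any two of its objects; this is your alternative argument via a terminal object. You additionally check the equivalence of categories $\mathcal{P}^K_H\to K/H$ explicitly and correctly point out that $\mathcal{P}^K_H$ is a preorder rather than a poset, whereas the paper's proof only asserts that equivalence.
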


\begin{proof}
    Note that we can describe the poset $\mathcal{P}^K_H$ as the disjoint union of $(\mathcal{P}^{K}_H)_{kH}$ for all the cosets $kH$ with $[k]\in K/H$.
    
Furthermore, as for any $h\in H$ there is a unique morphism from $k$ to $kh$, we get that for any $k\in K$, $|(\mathcal{P}^{K}_H)_{kH}|\cong |(\mathcal{P}^{K}_H)_{eH}|\simeq *$. Then we have that
    \begin{equation*}
        |\mathcal{P}^{K}_H|=\bigsqcup_{[k]\in K/H}|(\mathcal{P}^{K}_H)_{kH}|\simeq \bigsqcup_{[k]\in K/H} * = K/H.
    \end{equation*}
\end{proof}

Using this lemma, we can prove the first part of Theorem \ref{mainHomotopy}, recorded as the following proposition.

\begin{Prop}\label{prop:Pishocolim}
    Let $G\colon \mathcal Q\rightarrow \textrm{InjGroups}$ be a simple complex of groups, and $\mathcal P$ the associated poset to $\varphi\colon \hat{G}\rightarrow K$. There is a functor $F\colon \mathcal Q\rightarrow \Cat$ such that $$|\mathcal P|\simeq \hocolim_{\mathcal Q} |F|$$ with $|F(q)|\simeq K/G(q)$.
\end{Prop}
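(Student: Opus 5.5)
The plan is to use Thomason's theorem (Theorem \ref{simeq:hocolimTr}) and to recognize $\mathcal P$ as a Grothendieck construction, or as something homotopy equivalent to one.

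\textbf{Step 1: the functor.} We define $F\colon \mathcal Q\to \Cat$ by $F(q)=\mathcal P^K_{G(q)}$, the poset of Lemma \ref{lem:coset as poset} with $H=G(q)$. For $q\le r$ we let $F(q\le r)\colon \mathcal P^K_{G(q)}\to \mathcal P^K_{G(r)}$ be the identity on objects $k\mapsto k$. This is a functor: a morphism $k\to kh$ with $h\in G(q)$ is sent to $k\to kh$, and this is a morphism in $F(r)$ because $G(q)\subseteq G(r)$ inside $K$. We use the identification of $G(q)$ with $\varphi(G(q))$, and the inclusions are compatible since $\varphi$ is defined on the colimit $\hat G$. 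Functoriality in $\mathcal Q$ is immediate because all maps are identities on objects. Lemma \ref{lem:coset as poset} then gives $|F(q)|\simeq K/G(q)$.

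\textbf{Step 2: the Grothendieck construction.} The category $\Tr(F)$ has objects $(q,k)$ with $q\in\mathcal Q$ and $k\in K$. A morphism $(q,k)\to(r,k')$ exists exactly when $q\le r$ and $k\to k'$ in $F(r)$, that is, $k^{-1}k'\in G(r)$, and such a morphism is unique. So $\Tr(F)$ is a preorder. We define $\pi\colon \Tr(F)\to\mathcal P$ by $(q,k)\mapsto (kG(q),q)$. This is order preserving: if $q\le r$ and $k^{-1}k'\in G(r)$, then $(kG(q),q)\le(k'G(r),r)$ by Definition \ref{def:associatedGspace}. When $q=r$ the condition gives $kG(q)=k'G(q)$, which is equality in $\mathcal P$, consistent with the definition.

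\textbf{Step 3: $\pi$ is a homotopy equivalence.} We apply Quillen's Theorem A. For $x=(k_0G(r),r)\in\mathcal P$, the fibre $x\backslash\pi$ consists of the $(q,k)$ with $r\le q$ and $k_0^{-1}k\in G(q)$. We check that $(r,k_0)$ is an initial object of this fibre. It lies in the fibre, and for any $(q,k)$ in the fibre we have $r\le q$ and $k_0^{-1}k\in G(q)$, which is exactly a morphism $(r,k_0)\to(q,k)$ in $\Tr(F)$. Since $\Tr(F)$ is a preorder, this morphism is unique. Changing the representative $k_0$ gives an isomorphic object, so the choice does not matter. Hence every fibre is contractible and $|\Tr(F)|\simeq|\mathcal P|$. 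An alternative is to exhibit an inverse up to homotopy by choosing coset representatives $s\colon\mathcal P\to\Tr(F)$. We would check that $s$ is a functor and that $s\pi$ is related to the identity by a natural transformation, again using that $\Tr(F)$ is a preorder. This is a fallback if we prefer to avoid Theorem A.

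\textbf{Step 4: conclusion.} Combining Step 3 with Theorem \ref{simeq:hocolimTr} gives $|\mathcal P|\simeq|N\Tr(F)|\simeq|\hocolim_{\mathcal Q}NF|\simeq\hocolim_{\mathcal Q}|F|$.

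The main obstacle is verifying carefully that the morphism sets and order relations match the conventions of Definition \ref{def:associatedGspace} and of Thomason's Grothendieck construction, which is covariant here. That comparison is where Step 3 could break, and in particular the direction of $k^{-1}k'$ versus $k'^{-1}k$ must be checked.
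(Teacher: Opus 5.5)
Your proposal is correct and follows essentially the same route as the paper. Both use the functor $F(q)=\mathcal P^K_{G(q)}$ with identity-on-objects transition maps, apply Thomason's theorem, and compare $\mathcal P$ with $\Tr(F)$. The only difference is the comparison step: the paper asserts an equivalence of categories $\mathcal P\simeq\Tr(F)$ via functors $S,T$, whereas you apply Quillen's Theorem A with initial objects $(r,k_0)$. Since both categories are thin and your $\pi$ is surjective on objects and reflects the order, $\pi$ is itself that equivalence, so Theorem A is a valid but heavier tool. Your worry about $k^{-1}k'$ versus $k'^{-1}k$ is moot because $G(r)$ is a subgroup.
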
 

\begin{proof}
    Consider the functor $F\colon \mathcal Q \rightarrow \Cat$ given by $F(q)=\mathcal{P}^K_{G(q)}$ as defined in Lemma \ref{lem:coset as poset}. If $\alpha \colon q\rightarrow q'$ in $\mathcal Q$ then $F(\alpha)\colon F(q)\rightarrow F(q')$ is the functor defined as the identity on objects, and it is well defined in morphisms since $G(q)\subset G(q')$. That's because in $F(q)$, we have a morphism $k_1\rightarrow k_2$ if and only if $k_1^{-1}k_2\in G(q)$. Then, since $G(q)\subset G(q')$, the functor $F(\alpha)$ is well defined in morphisms because we also have $k_1^{-1}k_2\in G(q')$, so we have a morphism $k_1\rightarrow k_2$ in $F(q')$.
    
    By Lemma \ref{lem:coset as poset},
    the geometric realization is homotopically discrete and $|F(q)|\simeq K/G(q)$.
    We then have $$\hocolim_{q\in \mathcal Q} K/G(q)\simeq \hocolim_{q\in \mathcal Q} |F(q)|.$$

    We can use Thomason's theorem \ref{simeq:hocolimTr} to describe this space as the nerve of poset. Thus, $\hocolim_{q\in \mathcal Q} |F(q)|\simeq |Tr(F)|$. We finish the proof by showing that there is an equivalence of categories $\mathcal P\rightarrow Tr(F)$.
    
    The objects of $Tr(F)$ are the pairs $(q,x)$ such that $q \in \mathcal Q$ and $x\in F(q)$, and note that $|F(q)|\simeq K/G(q)$. But as we have defined $\mathcal P$, it has by objects the pairs $(q,x)$ such that $q \in \mathcal Q$ and $x\in K/G(q)$, so $\mathcal P$ and $Tr(F)$ are equivalent in terms of objects. 

    Now, the morphisms of $Tr(F)$ are the pairs $(\alpha,\beta)\colon (q_0,x_0)\rightarrow (q_1,x_1)$ given by $\alpha \colon q_0 \rightarrow q_1$ (a morphism of $\mathcal Q$) and $\beta\colon F(\alpha)(x_0)\rightarrow x_1$ (a morphism of $F(q_1)$).
    
    As we have defined $\mathcal P$, it has by morphisms the pairs $$(\alpha,\beta)\colon (q_0,[k_0])\rightarrow (q_1,[k_1])$$ (with $[k_0]\in K/G(q_0)$ and $[k_1]\in K/G(q_1)$) given by $\alpha \colon q_0 \rightarrow q_1$ (a morphism of $\mathcal Q$) and $\beta\colon k_0 \rightarrow k_1$ (a morphism of $\mathcal{P}^K_{G(q_1)}$), which exist as long as $k_0^{-1}k_1\in G(q_1)$.
    
Then as we have defined the functor $F$ as the one given by $F(q)=\mathcal{P}^K_{G(q)}$, it's clear than the morphisms of $\mathcal P$ and $Tr(F)$ are also equivalent.

In order to check it formally, one can define two functors $T\colon \mathcal P \to Tr(F)$ and $S\colon Tr(F)\to \mathcal P$ such that $S\circ T \simeq id_\mathcal P$ and $T\circ S \simeq id_{Tr(F)}$.

\end{proof}

\begin{proof}[Proof of Theorem \ref{mainHomotopy}]
By Proposition \ref{prop:Pishocolim}, we already proved the first part of theorem. To finish the proof, since $\mathcal{P}$ is a $K$-poset, we may form its Borel construction as in Definition \ref{clasicborelcons}, $|\mathcal P|_{hK}=EK\times_K |\mathcal P|$, and it induces the Borel fibration

\[
|\mathcal P|\longrightarrow |\mathcal P|_{hK}\longrightarrow BK.
\]

By Proposition \ref{prop:Pishocolim}, the $K$-space $\mathcal P$ is naturally identified with the Grothendieck construction
\[
\mathcal P\simeq \operatorname{Tr}(F),
\]
where $F\colon \mathcal Q \rightarrow \Cat$ is the diagram sending
$q$ to the coset space $K/G(q)$.
After geometric realization, Thomason's theorem \ref{simeq:hocolimTr} yields
\[
|\mathcal P|
\simeq
\operatorname{hocolim}_{Q^{op}} K/G(q).
\]

Applying the Borel construction and using that it can itself be expressed as a homotopy colimit (see Subsection \ref{prelimhomotopy}), together with the fact that homotopy colimits commute (see \cite[Theorem 24.9]{CS}), we obtain
\[
|\mathcal P|_{hK}
\simeq
\operatorname{hocolim}_{Q^{op}}
\bigl(EK\times_K K/G(q)\bigr).
\]

Finally, for every $q\in Q$, there is a natural homeomorphism
\[
EK\times_K K/G(q)
\cong
EK/G(q),
\]
and since $EK$ is a free contractible $G(q)$-space, we have
\[
EK/G(q)\simeq BG(q).
\]
Therefore
\[
|\mathcal P|_{hK}
\simeq
\operatorname{hocolim}_{Q^{op}} BG(q),
\]
which is the desired description.
\end{proof}
\begin{Obs}
    Note that the homotopy type of $|\mathcal P|_{hK}$ does not depend on $\varphi$. In fact, if there exists $\varphi\colon \hat{G}\rightarrow K$ a group homomorphism such that the restriction to $G(q)$ is injective for each $q\in \mathcal Q$, then $id\colon \hat{G}\rightarrow \hat{G}$ also satisfies them. In fact, if we consider the category $\textrm{Rep}(G)$ of pairs $(\varphi,K)$ where $\varphi\colon \hat{G}\rightarrow K$ which satisfy previous conditions, and morphisms from $(\varphi,K)$ to $(\varphi',K')$ are given by $f\colon K\rightarrow K'$ such that $f\circ \varphi=\varphi'$, then it is either empty or it has an initial object $(id,\hat{G})$.
\end{Obs}
We will describe some trivial examples.
\begin{Ex}
Let $\CQ$ be the trivial poset $\CQ=\{1\}$, and let $G\colon \{1\}\rightarrow \textrm{InjGroups}$ be a simple complex of groups such that $G(1)=G$ a finite group. Then, $\hat{G}=G$ and let $\varphi\colon G\to K$ be a group homomorphism satisfying the conditions of \ref{def:associatedGspace}, we compute $\CP=K/G$ and in this case $|\CP|_{hK}=|K/G|_{hK}\simeq BG$.

Thus, let $G$ be the trivial group $G=e$, then $\CP=K$ and $|\CP|_{hK}=\ast$ is contractible.
\end{Ex}

\begin{Ex}
On the other hand, let $\CQ$ be a connected poset, and let $G\colon \CQ\rightarrow \textrm{InjGroups}$ be a simple complex of groups such that $G(q)=e$ the trivial group for every $q\in \CQ$. In this case, $\hat{G}=e$ and we compute $\CP=K\times \CQ=\sqcup_{\CQ}K$, so $|\CP|_{hK}\simeq \hocolim_\CQ Be\simeq |\CQ|.$
\end{Ex}

\begin{Ex}
    We describe in detail the case of the symmetric groups. Let $r_i=(i,i+1)$, then the symmetric group can be presented as a symmetric group $$\mathcal{S}_n=\langle r_1,\dots r_{n-1}|r_i^2, (r_ir_{i+1})^3, (r_ir_j)^2 \textrm{ if } |i-j|>1 \rangle .$$ Then we denote by $$X:=\{1,\dots,n-1\},$$ and let $A$ be any subset of $X$ such that $A\subsetneq X$. Now we can define a simple complex of groups:
\begin{align*}
    G\colon \mathcal{Q}_X \longrightarrow & \textrm{InjGroups}\\
    A\mapsto & <r_i\mid i \in A>\\
    \emptyset \mapsto & e
\end{align*}
where $\mathcal{Q}_X$ is the poset of the proper subsets of $X$ with the set inclusion as the ordering relation.

For example, if we assume that \underline{$n=3$}:

We will have
\begin{align*}
    G\colon \mathcal{Q}_{\{1,2\}} \longrightarrow & \textrm{InjGroups}\\
    \emptyset \mapsto & e\\
    \{1\}\mapsto & <r_1>\cong\mathbb{Z}/2\\
    \{2\}\mapsto & <r_2>\cong\mathbb{Z}/2
\end{align*}

Note that $$<r_1,r_2 \mid r_1^2=r_2^2=1, r_1r_2r_1=r_2r_1r_2>.$$ Thus, we can define the poset $$\mathcal{P}(\mathcal{Q}_{\{1,2\}}, G)=\mathcal{S}_3/e\bigsqcup \mathcal{S}_3/<r_1>\bigsqcup\mathcal{S}_3/<r_2>,$$ with the partial ordering $$(\emptyset,k\cdot e)<(i,k'<r_i>)$$ iff $\emptyset<i$ that is true for any $i\in \{1,2\}$, and $k^{-1}k'\in<r_i>$. 

We need to remark that the set of elements of the group $\mathcal{S}_3$ is $$\{e,r_1,r_2r_1,r_1r_2r_1, r_2, r_1r_2\}.$$ Then the set of elements of the group $\mathcal{S}_3/<r_1>$ is $\{[e],[r_2], [r_1r_2]\}$. Similarly, the set of elements of the group $\mathcal{S}_3/<r_2>$ is $\{[e],[r_1], [r_2r_1]\}$.

Let's assume that $i=1$ (the other case is symmetric). Then, $k'\in\{e, r_2, r_1r_2\}$, and therefore $k^{-1}k'\in<r_1>$ if and only if $k^{-1}k'=e$ or $k^{-1}k'=r_1$. 
\begin{itemize}
    \item If $k'=e$, then $k=e$ or $k=r_1$
    \item If $k'=r_2$, then $k=r_2r_1$ or $k=r_2$
    \item if $k'=r_1r_2$, then $k=r_1r_2r_1$ or $k=r_1r_2$.
\end{itemize}

Thus, we can represent the $K$-poset $\mathcal{P}(\mathcal{Q}_{\{1,2\}}, G)$:

\begin{tikzcd}[column sep=0]
                                                           &                        & {\phi, e} \arrow[ld, bend right] \arrow[rd, bend left]         &                        &                                                            \\
                                                           & {1, r_1=1,e}           &                                                                & {2, r_2=2,e}           &                                                            \\
{\phi, r_1} \arrow[d, bend right] \arrow[ru, bend left]    &                        &                                                                &                        & {\phi, r_2} \arrow[d, bend left] \arrow[lu, bend right]    \\
{2, r_1 = 2, r_1r_2}                                       &                        &                                                                &                        & {1,r_2=1,r_2r_1}                                           \\
{\phi, r_1r_2} \arrow[rd, bend right] \arrow[u, bend left] &                        &                                                                &                        & {\phi, r_2r_1} \arrow[ld, bend left] \arrow[u, bend right] \\
                                                           & {1,r_1r_2=1,r_1r_2r_1} &                                                                & {2,r_2r_1=2,r_2r_1r_2} &                                                            \\
                                                           &                        & {\phi, r_1r_2r_1} \arrow[ru, bend right] \arrow[lu, bend left] &                        &                                                           
\end{tikzcd}

We can also compute $$<r_1>\backslash\mathcal{S}_3/<r_1>=\{[e],[r_2]\}$$ and $$<r_2>\backslash\mathcal{S}_3/<r_1>=\{[e],[r_1r_2]\}.$$ And similarly, $$<r_2>\backslash\mathcal{S}_3/<r_2>=\{[e],[r_1]\}$$ and $$<r_1>\backslash\mathcal{S}_3/<r_2>=\{[e],[r_2r_1]\}.$$
\end{Ex}

\begin{Ex}
Let $\mathcal A_p(G)$ be the poset of elementary abelian $p$-subgroups of a finite group $G$ and $sd\mathcal A_p(G)$ its barycentric subdivision, the poset of chains. The group $G$ acts by conjugation on $sd\mathcal A_p(G)$, let $[sd\mathcal A_p(G)]$ be the poset of conjugacy classes of chains. 

In this context, one can define a simple complex of groups on the poset $[sd\mathcal A_p(G)]$ as follows. Let $$N\colon [sd\mathcal A_p(G)]\rightarrow Groups$$ be the functor defined by $$N([E_0<\ldots<E_n])=\bigcap N_G(E_i).$$

\begin{enumerate}
    \item The associated poset as in Definition \ref{def:associatedGspace} is equivalent to $sd\mathcal A_p(G)$ (which has the same geometric realization as $\mathcal A_p(G)$).
    \item If $|\mathcal A_p(G)|$ is $1$-connected then the finite group $G$ is the associated group of the simple complex of groups $N$, and $|\mathcal A_p(G)|$ is the universal cover of $$\hocolim_\mathcal{Q} EG\times_G G/G(q).$$
    \item If $|\mathcal A_p(G)|\simeq \ast$ then the finite group $G$ is the associated group of the simple complex of groups $N$, and the higher homotopy groups are trivial.
    
\end{enumerate}

One motivation for the study of simple complexes of groups is the way they arise in relation with decompositions of finite groups.

\end{Ex}

\subsection{The join of simple complexes of groups}\label{sec:join}

The geometric join construction of spaces has the advantage of increasing connectivity. This property is used to construct universal spaces for actions with prescribed isotropy. For example $EG$ has a model $\bigcup_n *_n G$ which is a contractible space with a free $G$-action. 
The goal of this section is to see if there is an analogue of the geometric join, but at the level of simple complexes of groups that is compatible with the geometric realization. 

\begin{Def}\label{r:join}
    Given two posets $\mathcal{P}$ and $\mathcal{Q}$, we define $\mathcal{P}*\mathcal{Q}$ as the new poset with objects  $\mathcal{P}\times \mathcal{Q} \cup \mathcal P \cup \mathcal Q$ with a new type of relation $p >(p,q)<q$ for any $p\in \mathcal{P}$ and $q\in \mathcal{Q}$. For the rest are the usual objectwise relations. 
\end{Def}

\begin{Lema}
    Let $\mathcal{P}$ and $\mathcal Q$ be two posets. Then $|\mathcal{P}*\mathcal{Q}|\simeq |\mathcal{P}|*|\mathcal{Q}|$.
\end{Lema}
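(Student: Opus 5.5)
We compare the poset $\mathcal P*\mathcal Q$ with the classical simplicial join of order complexes. Recall that the join of posets in the usual sense, $\mathcal P\star\mathcal Q$, is the disjoint union $\mathcal P\sqcup\mathcal Q$ with every $p<q$ added. It satisfies $|\mathcal P\star\mathcal Q|\cong|\mathcal P|*|\mathcal Q|$: a chain in it is a chain of $\mathcal P$ followed by a chain of $\mathcal Q$, so its order complex is the simplicial join of the two order complexes. The plan is to show that $|\mathcal P*\mathcal Q|$ (Definition \ref{r:join}) is homotopy equivalent to the homotopy pushout of $|\mathcal P|\leftarrow|\mathcal P\times\mathcal Q|\rightarrow|\mathcal Q|$, and that this homotopy pushout is $|\mathcal P|*|\mathcal Q|$.

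\textbf{Decomposition.} Set $A=\mathcal P\times\mathcal Q\cup\mathcal P$ and $B=\mathcal P\times\mathcal Q\cup\mathcal Q$, full subposets of $\mathcal P*\mathcal Q$. No element of $\mathcal P$ is comparable with any element of $\mathcal Q$, so every chain lies in $A$ or in $B$. Hence $|\mathcal P*\mathcal Q|=|A|\cup|B|$ with $|A|\cap|B|=|\mathcal P\times\mathcal Q|$, and this union of subcomplexes is a homotopy pushout.

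\textbf{Identifying the pieces.} We claim $|A|\simeq|\mathcal P|$ and $|B|\simeq|\mathcal Q|$, compatibly with the projections. In $A$, define $r\colon A\to A$ by $r(p,q)=p$ and $r(p)=p$. This is order preserving, since $(p,q)\le(p',q')$ implies $p\le p'$, and $(p,q)<p'$ implies $p\le p'$. Moreover $x\le r(x)$ for all $x$, so Quillen's natural transformation argument gives a homotopy between $r$ and the identity. Thus $|A|$ deformation retracts onto $|\mathcal P|$, with $|\mathcal P\times\mathcal Q|\hookrightarrow|A|$ corresponding to the projection $\mathrm{pr}_1$. The argument for $B$ is symmetric. (If the intended relations only allow $(p,q)<p'$ when $p\le p'$, this is exactly what makes $r$ monotone; I would check this convention first, since it is the main point where the proof depends on reading Definition \ref{r:join} correctly.)

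\textbf{Conclusion.} The previous two steps give $|\mathcal P*\mathcal Q|\simeq\operatorname{hocolim}\bigl(|\mathcal P|\xleftarrow{\mathrm{pr}_1}|\mathcal P\times\mathcal Q|\xrightarrow{\mathrm{pr}_2}|\mathcal Q|\bigr)$. Next, $|\mathcal P\times\mathcal Q|\simeq|\mathcal P|\times|\mathcal Q|$, with the projections corresponding; this holds because the nerve preserves products and realization preserves finite products in compactly generated spaces. The homotopy pushout of $X\leftarrow X\times Y\rightarrow Y$ is the join $X*Y=X\times Y\times[0,1]/\!\sim$ by definition.

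\textbf{Main obstacle.} I expect the main obstacle to be making the gluing statement precise: one must verify that $|A|\cup_{|\mathcal P\times\mathcal Q|}|B|$ is a homotopy pushout, which holds because the inclusions are cofibrations of CW complexes. One must also verify that the two retractions are compatible, so that the induced map on pushouts is a weak equivalence by the gluing lemma.
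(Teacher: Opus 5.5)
Your argument is correct. It reaches the same intermediate equivalence $|\mathcal P*\mathcal Q|\simeq\operatorname{hocolim}\bigl(|\mathcal P|\leftarrow|\mathcal P\times\mathcal Q|\rightarrow|\mathcal Q|\bigr)$ as the paper, but by a different route.

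The paper identifies $\mathcal P*\mathcal Q$ with the Grothendieck construction $\Tr(F)$ of the span $\mathcal P\xleftarrow{\pi_{\mathcal P}}\mathcal P\times\mathcal Q\xrightarrow{\pi_{\mathcal Q}}\mathcal Q$ in $\Cat$. It then invokes Thomason's theorem (Theorem \ref{simeq:hocolimTr}) as a black box. You instead cut $\mathcal P*\mathcal Q$ into the full subposets $A$ and $B$, which are precisely the Grothendieck constructions (poset mapping cylinders) of the two projections. You glue them along $\mathcal P\times\mathcal Q$ and collapse each cylinder onto its base using $x\le r(x)$. In effect, you prove by hand the special case of Thomason's theorem for a span.

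The paper's route is shorter and matches the Thomason-based arguments used elsewhere (e.g.\ Proposition \ref{prop:Pishocolim}). Yours is elementary and self-contained, and it gives explicit deformation retractions. It also shows exactly which relations of Definition \ref{r:join} are needed: the transitively forced relations $(p,q)<p'$ for $p\le p'$. The paper uses these only implicitly when it asserts $\Tr(F)=\mathcal P*\mathcal Q$.

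The two points you list as obstacles are routine. A union of subcomplexes is a homotopy pushout because subcomplex inclusions are cofibrations. Compatibility is automatic: your retractions restrict to $\pi_{\mathcal P}$ and $\pi_{\mathcal Q}$ on $\mathcal P\times\mathcal Q$. Together with the identity on $|\mathcal P\times\mathcal Q|$, they therefore form a strictly commuting map of spans that is an objectwise homotopy equivalence.

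The opening paragraph on the classical poset join is never used and can be dropped.
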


\begin{proof}
The proof is based on Thomason's theorem \ref{simeq:hocolimTr}, that is, on the description of the homotopy colimit of a functor to nerves of categories as the nerve of the transporter category; and the description of the join of two spaces $X$ and $Y$ as the homotopy colimit of the diagram $X\leftarrow X\times Y \rightarrow Y$ where the maps are the corresponding projections.

Let
\[
I=\bigl(1 \longleftarrow 0 \longrightarrow 2\bigr)
\]
be an indexing category, and define a functor $F\colon I\longrightarrow \Cat$ by
\[
F(1)=\mathcal P,\qquad
F(0)=\mathcal P\times\mathcal Q,\qquad
F(2)=\mathcal Q,
\]
where the morphisms are the canonical projections
\[
\pi_{\mathcal P}\colon\mathcal P\times\mathcal Q\longrightarrow\mathcal P,
\qquad
\pi_{\mathcal Q}\colon\mathcal P\times\mathcal Q\longrightarrow\mathcal Q.
\]

Recall that the Grothendieck construction \(Tr(F)\) has objects $\bigsqcup_{i\in I}F(i),$ which in this case are described by $\mathcal P\sqcup(\mathcal P\times\mathcal Q)\sqcup\mathcal Q$, and its order is generated by the orders on each component together with
\[
p>(p,q)<q,
\]
for every \(p\in\mathcal P\) and \(q\in\mathcal Q\), coming from the projection maps. Hence $
Tr(F)=\mathcal P*\mathcal Q
$

Applying Thomason's homotopy colimit Theorem \ref{simeq:hocolimTr} yields a natural homotopy equivalence
\[
|Tr(F)|
\simeq
\operatorname{hocolim}|F|.
\]
Since \(Tr(F)=\mathcal P*\mathcal Q\), we obtain
\[
|\mathcal P*\mathcal Q|
\simeq
\operatorname{hocolim}
\left(
|\mathcal P|
\longleftarrow
|\mathcal P\times\mathcal Q|
\longrightarrow
|\mathcal Q|
\right).
\]

Finally, because geometric realization preserves products of simplicial sets, $|\mathcal P\times\mathcal Q|
\cong
|\mathcal P|\times|\mathcal Q|$ (see \cite[Theorem 14.3]{May67}),  and the homotopy colimit of the diagram
\[
|\mathcal P|
\longleftarrow
|\mathcal P|\times|\mathcal Q|
\longrightarrow
|\mathcal Q|
\]
is the join $|\mathcal P|*|\mathcal Q|$. Therefore,
$
|\mathcal P*\mathcal Q|
\simeq
|\mathcal P|*|\mathcal Q|
$.
\end{proof}

\begin{Obs}
    One the main properties of the join construction is that increases the connectivity: if $X$ is $n$-connected and $Y$ is $m$-connected then $X*Y$ is $n+m+1$-connected. In particular $*_{\infty} X=\cup_n *_N X$ is weakly contractible.
\end{Obs}

We concentrate on the case of $|\mathcal{P}|*|\mathcal{P}|$ where $|\mathcal{P}|$ is as in Definition \ref{d:simplecomplexgroups}. 

\begin{Obs}\label{def:joinofsystems}
    Let $\varphi\colon \hat{G}\rightarrow K$ be a group homomorphism such that the restriction to $G(q)$ is injective for each $q\in \mathcal Q$. We denote by $\mathcal P$ the associated $K$-poset. Following Definition \ref{r:join} the poset $\mathcal{P}*\mathcal{P}$ can be described as follows. The elements are
    \begin{itemize}
        \item $((q,kG(q)),(q',k'G(q')))$, where $q,q'\in\mathcal{Q}$ and $k,k'\in K$,
        \item $((q,kG(q)),0)$ with $q\in\mathcal{Q}$ and $k,\in K$,
        \item $(0,(q',k'G(q')))$ with $q'\in\mathcal{Q}$ and $k',\in K$,
    \end{itemize} 
    and poset relations generated by
    \begin{align*}
    ((q_1,k_1G(q_1)),0)<((q_2,k_2G(q_2)),0) &\Longleftrightarrow (q_1,k_1G(q_1))<(q_2,k_2G(q_2));\\
    ((q_1,kG(q_1)),0)>((q_2,k_2G(q_2)),(q_3,k_3G(q_3))) &\Longleftrightarrow (q_1,k_1G(q_1))>(q_2,k_2G(q_2));\\
    (0,(q_1,k_1G(q_1)))<(0,(q_2,k_2G(q_2))) &\Longleftrightarrow (q_1,k_1G(q_1))<(q_2,k_2G(q_2));\\
    (0,(q_1,k_1G(q_1)))>((q_2,k_2G(q_2)),(q_3,k_3G(q_3))) &\Longleftrightarrow (q_1,k_1G(q_1))>(q_3,k_3G(q_3));\\
    \end{align*} 
    and, $$((q_1,k_1G(q_1)),(q_2,k_2G(q_2)))<((q_3,k_3G(q_3)),(q_4,k_4G(q_4)))$$ if and only if $((q_1,k_1G(q_1))<((q_3,k_3G(q_3))$ and $(q_2,k_2G(q_2))<(q_4,k_4G(q_4)).$
\end{Obs}

\begin{Obs}
    Let $G$ be a discrete group, and let $H$ and $K$ be subgroups, there is a classical formula describing a decomposition of the product of orbits  $G/H\times G/K=\bigsqcup_{[g]\in H\backslash G/K}G/H\cap {}^{g}K$, which applied to the particular case gives $$K/G(q)\times K/G(q')=\bigsqcup_{\alpha_i\in G(q)\backslash K/(G(q')}K/G(q)\cap {}^{\alpha_i}G(q').$$
\end{Obs}

The goal is to describe the join in Definition \ref{def:joinofsystems} as a new simple complex of groups and to identify the corresponding structure.

We denote $\mathcal{Q}_0:=\mathcal{Q}$, and then $G_0:=G$ and $\mathcal{P}_0:=\mathcal{P}(\mathcal{Q}_0,G_0)$. Now we will define $\mathcal{Q}_1$ and $G_1$ such that $\mathcal{P}_1:=\mathcal{P}(\mathcal{Q}_1, G_1)\cong\mathcal{P}_0 * \mathcal{P}_0$. 

Let $\mathcal{Q}_1$ be the subposet of $\mathcal{P}_0*\mathcal{P}_0$ whose objects are of the form $((q,eG(q)),0)$, $(0,(q',\alpha_iG(q')))$ and $((q,eG(q)),(q',\alpha_iG(q')))$ where $\alpha_i$ are representatives of the double coset $G(q)\backslash K/G(q')$.

Then we are going to define a simple complex of groups on $\mathcal{Q}_1$ 

\begin{Prop}
    Let $\mathcal{Q}_1$ as before. Then \begin{align*}
    G_1\colon\mathcal{Q}_1\longrightarrow & \textrm{InjGroup}\\
    (q,0)\mapsto &G(q)\\
    (0,q',\alpha_iG(q'))\mapsto & {}^{\alpha_i}G(q')\\
    (q,q',\alpha_iG(q'))\mapsto &G(q)\cap ^{\alpha_i}G(q').
\end{align*}
is a functor such that $(\mathcal{Q}_1,G_1)$ is a simple complex of groups with a monomorphism $\varphi:\hat{G_1}\rightarrow K$ and associated $K$-poset $\mathcal{P}_1$ with $|\mathcal{P}_1|\simeq |\mathcal{P}_0|*|\mathcal P_0|$. 
\end{Prop}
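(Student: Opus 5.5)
Our plan is to verify the statement in three stages. First, functoriality of $G_1$. Second, the existence of the homomorphism to $K$. Third, the identification of the associated $K$-poset with $\mathcal{P}_0*\mathcal{P}_0$; the homotopy statement then follows from the Lemma on $|\mathcal{P}*\mathcal{Q}|\simeq|\mathcal{P}|*|\mathcal{Q}|$.

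\textbf{Functoriality.} The order on $\mathcal{Q}_1$ is the one inherited from $\mathcal{P}_0*\mathcal{P}_0$ (Remark \ref{def:joinofsystems}), so we must treat two kinds of relations.

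The join relations $(q,0)>(q,q',\alpha_iG(q'))<(0,q',\alpha_iG(q'))$ give the inclusions $G(q)\cap{}^{\alpha_i}G(q')\subseteq G(q)$ and $G(q)\cap{}^{\alpha_i}G(q')\subseteq{}^{\alpha_i}G(q')$.

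The relations within a factor come from $\mathcal{P}_0$. Here $(q_1,eG(q_1))<(q_2,eG(q_2))$ forces $q_1<q_2$, and we map $G(q_1)\hookrightarrow G(q_2)$. For $(q_1',\alpha G(q_1'))<(q_2',\beta G(q_2'))$ we have $\alpha^{-1}\beta\in G(q_2')$, hence ${}^{\alpha}G(q_1')\subseteq{}^{\alpha}G(q_2')={}^{\beta}G(q_2')$.

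All maps are restrictions of the identity of $K$ (inclusions of subgroups of $K$), so composites agree and $G_1$ is a functor into $\textrm{InjGroups}$. Connectedness of $\mathcal{Q}_1$ follows since every vertex of type $(q,q',\alpha_i)$ is adjacent to $(q,0)$, and the $(q,0)$ form a copy of $\mathcal{Q}$.

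\textbf{The homomorphism to $K$.} By the universal property of $\hat{G_1}=\colim G_1$, the compatible family of inclusions into $K$ induces $\varphi\colon\hat{G_1}\to K$ whose restriction to each $G_1(x)$ is injective. This is all that Definition \ref{def:associatedGspace} requires; "monomorphism" should be read in this local sense.

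\textbf{The associated poset.} We then form $\mathcal{P}_1=\bigsqcup_{x\in\mathcal{Q}_1}K/G_1(x)$ and build an explicit $K$-equivariant map $\Phi\colon\mathcal{P}_1\to\mathcal{P}_0*\mathcal{P}_0$:
\[
(kG(q),(q,0))\mapsto((q,kG(q)),0),\qquad (k\,{}^{\alpha_i}G(q'),(0,q',\alpha_i))\mapsto(0,(q',k\alpha_iG(q'))),
\]
\[
(k(G(q)\cap{}^{\alpha_i}G(q')),(q,q',\alpha_i))\mapsto((q,kG(q)),(q',k\alpha_iG(q'))).
\]
Each map is well defined because the stabilizer of $(q',\alpha_iG(q'))$ is ${}^{\alpha_i}G(q')$, and the stabilizer of the pair is the intersection.

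Bijectivity on the product part is exactly the double coset formula of the preceding Remark. Every $K$-orbit on $K/G(q)\times K/G(q')$ contains a unique point $(eG(q),\alpha_iG(q'))$, and its stabilizer is $G(q)\cap{}^{\alpha_i}G(q')$. The single-factor parts are handled the same way, using one representative per orbit.

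The main obstacle is the single-factor part of $\mathcal{Q}_1$. As literally defined, $\mathcal{Q}_1$ contains the vertices $(0,(q',\alpha_iG(q')))$ for all double coset representatives $\alpha_i$, and these are possibly several per $q'$. The orbits $K/{}^{\alpha_i}G(q')$ for different $\alpha_i$ are all copies of $K/G(q')$. So I would either restrict to one representative $\alpha=e$ on that side, or check that the duplicated copies are forced by the relations. The first option seems correct: conjugation by $\alpha_i$ identifies the cosets.

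Finally, we check that $\Phi$ and its inverse preserve order, via the description of $<$ in Definition \ref{def:associatedGspace}. For $x<y$ in $\mathcal{Q}_1$, we have $(kG_1(x),x)<(k'G_1(y),y)$ if and only if $k^{-1}k'\in G_1(y)$. This translates directly into the componentwise relations in Remark \ref{def:joinofsystems}, using that all $G_1(y)$ are the stabilizers computed above. With $\mathcal{P}_1\cong\mathcal{P}_0*\mathcal{P}_0$ established, the earlier Lemma gives $|\mathcal{P}_1|\simeq|\mathcal{P}_0|*|\mathcal{P}_0|$.
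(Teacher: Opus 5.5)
Your route is the paper's: your $\Phi$ is exactly the paper's functor $V$, after which the paper writes down an inverse $W$ and applies the join lemma. You have also located the real difficulty, which the paper's $W$ does not resolve either. It sends $(0,(q,kG(q)))$ and $((q,kG(q)),(q',k'G(q')))$ to vertices $(0,q,kG(q))$ and $(q,q',k^{-1}k'G(q'))$, and these lie in $\mathcal{Q}_1$ only when $k$, resp.\ $k^{-1}k'$, happens to be a chosen representative.

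Your repair fails, however, and it fails exactly at the step you say ``translates directly'' into the join relations. Suppose only $(0,(q',eG(q')))$ is kept. Then for $\alpha_i\notin G(q')$ the middle vertex $((q,eG(q)),(q',\alpha_iG(q')))$ lies below no retained right vertex over $q'$ in the subposet $\mathcal{Q}_1$. So the relation $((q,kG(q)),(q',k\alpha_iG(q')))<(0,(q',k\alpha_iG(q')))$ of $\mathcal{P}_0*\mathcal{P}_0$ has no preimage in $\mathcal{P}_1$.

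Imposing $x<(0,q',e)$ by hand, with $x=(q,q',\alpha_i)$, is no better. The associated-poset order then puts $(eG_1(x),x)$ below $(eG(q'),(0,q',e))$. Your $\Phi$ sends this to $((q,eG(q)),(q',\alpha_iG(q')))<(0,(q',eG(q')))$, which is false in the join. Identifying $K/{}^{\alpha_i}G(q')$ with $K/G(q')$ by right translation by $\alpha_i$ amounts to a twisting element, and simple complexes of groups have none.

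In fact no choice of $\mathcal{Q}_1$ can work, and the statement as written is false. The associated poset of any simple complex of groups $(\mathcal{Q}',G')$ contains the subposet $\{(eG'(x),x)\}$. This subposet meets every $K$-orbit exactly once, and whenever two orbits contain comparable elements, their representatives in it are comparable.

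In $\mathcal{P}_0*\mathcal{P}_0$, every middle orbit over $(q,q')$ lies below the left orbit over $q$ and the right orbit over $q'$. If those have representatives $((q,aG(q)),0)$ and $(0,(q',bG(q')))$, the middle representative is forced to be $((q,aG(q)),(q',bG(q')))$. Hence only the double coset $G(q)a^{-1}bG(q')$ can occur. So $\mathcal{P}_0*\mathcal{P}_0$ is not $K$-isomorphic to the associated poset of any simple complex of groups once some $G(q)\backslash K/G(q')$ has two elements.

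Concretely, take $\mathcal{Q}=\{*\}$, $K=\mathcal{S}_3$ and $H=G(*)=\langle r_1\rangle$. Then $|\mathcal{P}_0|*|\mathcal{P}_0|$ is the join of two three-point sets, so it is $\simeq\bigvee^4S^1$.

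The literal $\mathcal{Q}_1$ is the zigzag $R_e>M_e<L>M_{r_2}<R_{r_2}$, with one right vertex for each of the double cosets $H$ and $Hr_2H$. Its groups are $H,H,H,1,{}^{r_2}H$. The resulting $\mathcal{P}_1$ is a hexagon (the $L$- and $R_{r_2}$-vertices joined by the six $M_{r_2}$-elements) with three pendant edges to the $R_e$-vertices, so $|\mathcal{P}_1|\simeq S^1$. Moreover $\hat{G_1}\cong\mathbb{Z}/2*\mathbb{Z}/2$ does not embed in $\mathcal{S}_3$, so your local reading of ``monomorphism'' is indeed forced. Your truncated $\mathcal{Q}_1$ gives three contractible components instead.

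To get a true statement you need one of two changes. Either add a hypothesis making every $G(q)\backslash K/G(q')$ a single point, e.g.\ $K=G(q)G(q')$ for all $q,q'$; then your $\Phi$ is an isomorphism and the argument goes through. Or pass to general complexes of groups with twisting elements in the sense of Bridson--Haefliger.
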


\begin{proof}
$G_1$ is a functor, because all the relations in $\CQ_1$ induce inclusion relations in their images by $G_1$. Then, $\hat{G}_1\colon=\colim_{\mathcal{Q}_1}G_1$, and we get $\varphi^*\colon \hat{G_1}\rightarrow K$ such that its associated poset $\mathcal{P}_1$ satisfies $|\mathcal{P}_1|\simeq\hocolim_{\mathcal{Q}_1}|F|$ where $|\mathcal{P}_1|\simeq|Tr(\mathcal{Q}_1,F)|$. 

Now, to prove that $|\mathcal{P}_1|\simeq|\mathcal{P}_0|*|\mathcal{P}_0|$, we show that both categories are equivalent by describing functors in both directions:

\begin{align*}
    V\colon\mathcal{P}(\mathcal{Q}_1, G_1)\longrightarrow &\mathcal{P}(\mathcal{Q}_0, G_0)*\mathcal{P}(\mathcal{Q}_0, G_0)\\
    ((q,q',\alpha_iG(q')),[{k}]\in K/G(q)\cap ^{\alpha_i}G(q')) \mapsto &((q,{k}G(q)),(q',{k}\alpha_iG(q')))\\
    ((q,0),[k]\in K/G(q))\mapsto &((q,kG(q)),0)\\
    ((0,q,\alpha_iG(q)),[{k}]\in K/^{\alpha_i}G(q))\mapsto &(0,(q,{k}\alpha_iG(q))).
\end{align*}

We also have the functor:
\begin{align*}
    W\colon\mathcal{P}(\mathcal{Q}_0, G_0)*\mathcal{P}(\mathcal{Q}_0, G_0) \longrightarrow & \mathcal{P}(\mathcal{Q}_1, G_1)\\
    ((q,kG(q)),0) \mapsto & ((q,0),[k]\in K/G(q))\\
    (0,(q,{k}G(q)))  \mapsto & ((0,q,kG(q)),[e]\in K/{}^kG(q))\\
    ((q,kG(q)),(q',k'G(q'))) \mapsto & ((q,q',k^{-1}k'G(q')),[k]\in K/G(q)\cap ^{k^{-1}k'}G(q')).
\end{align*}

\end{proof}

We can iterate this process to obtain new simple complexes of groups $(\mathcal{Q}_n,G_n)$ with morphisms $\varphi_n\colon \hat{G}_n\to K$, such that the corresponding associated $K$-poset $\mathcal{P}_n$ satisfies $$|\mathcal{P}_n|\simeq |\mathcal P_{n-1}|*|\mathcal P|\simeq *_n|\mathcal P|$$ with the property that the values of $G_n\colon \mathcal{Q}_n\to \textrm{InjGroups}$ are of the form $$G(q)\cap { }^{k_1}G(q_1) \cap \dots \cap { }^{k_{n-1}}G(q_{n-1}).$$ 

\begin{Obs}
    Note that at each step the family of subgroups increases by taking conjugations and subgroups. In the limit one obtain the family of subgroups of $K$ generated by $\{G(q)\}_{q\in \mathcal{Q}}$, that is, those subgroups obtained by intersections and conjugations. 
\end{Obs}

Even if $\mathcal{Q}$ is finite, this property is no longer satisfied when we iterate this process. Next step $\mathcal{Q}_1$ is no longer finite if $K$ is not, but the length of the poset (Definition \ref{lengposet}) is controlled. 

\begin{Lema}\label{lengjoin}
    Let $\mathcal{P}$ and $\mathcal{Q}$ be posets of finite length $n$ and $m$, then $\leng(\mathcal{P}*\mathcal{Q})=n+m+1.$
\end{Lema}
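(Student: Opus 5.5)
We prove the equality by two inequalities, using the explicit description of $\mathcal P*\mathcal Q$ from Definition \ref{r:join}. The objects are $\mathcal P\sqcup(\mathcal P\times\mathcal Q)\sqcup\mathcal Q$. The order is the product order on $\mathcal P\times\mathcal Q$, the given orders on $\mathcal P$ and $\mathcal Q$, and the relations generated by $(p,q)<p'$ whenever $p\le p'$ and $(p,q)<q'$ whenever $q\le q'$. No element of $\mathcal P$ is comparable to an element of $\mathcal Q$, and elements of $\mathcal P\sqcup\mathcal Q$ are never below elements of $\mathcal P\times\mathcal Q$.

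For the lower bound, choose maximal chains $p_0<\dots<p_n$ in $\mathcal P$ and $q_0<\dots<q_m$ in $\mathcal Q$. Build a chain in $\mathcal P\times\mathcal Q$ by a staircase path:
\[
(p_0,q_0)<(p_1,q_0)<\dots<(p_n,q_0)<(p_n,q_1)<\dots<(p_n,q_m).
\]
This chain has $n+m+1$ elements. Append $p_n$ (or $q_m$) on top, using $(p_n,q_m)<p_n$. The result is a chain of $n+m+2$ elements, so $\leng(\mathcal P*\mathcal Q)\ge n+m+1$.

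For the upper bound, take any chain $C$ in $\mathcal P*\mathcal Q$. By the observations above, $C$ splits as $C_0\sqcup C_1$. Here $C_0=C\cap(\mathcal P\times\mathcal Q)$ is an initial segment, and $C_1$ is contained entirely in $\mathcal P$ or entirely in $\mathcal Q$, since the two are incomparable. Say $C_1\subseteq\mathcal P$; the other case is symmetric. Let $(p,q)=\max C_0$ and $p'=\min C_1$ when both sets are nonempty.

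The key step is to bound $|C_0|$ by projection. A strict chain in the product order has strictly increasing length-sum, measured by the height functions $h_{\mathcal P}(p)$ and $h_{\mathcal Q}(q)$, which are the lengths of the longest chains ending at $p$ and at $q$. Consequently
\[
|C_0|\le h_{\mathcal P}(p)+h_{\mathcal Q}(q)+1.
\]
Next, $p\le p'$ gives
\[
|C_1|\le n-h_{\mathcal P}(p')+1\le n-h_{\mathcal P}(p)+1.
\]
Adding these and using $h_{\mathcal Q}(q)\le m$ yields $|C|\le n+m+2$.

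The main obstacle is the edge cases where $C_0$ or $C_1$ is empty. If $C_0$ is empty, $C$ lies in $\mathcal P$ or in $\mathcal Q$, so $|C|\le\max(n,m)+1$. If $C_1$ is empty, $|C|\le n+m+1$ directly from the bound on $|C_0|$. We must also confirm that the relation $(p,q)<p'$ really requires $p\le p'$ (and not merely $p<p'$) once it is generated transitively. With that checked, every chain has at most $n+m+2$ elements, so $\leng(\mathcal P*\mathcal Q)=n+m+1$.
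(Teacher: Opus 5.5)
Your argument is correct, and it proves more than the paper's proof does. The paper only exhibits the chain $(p_1,q_1)<\dots<(p_1,q_{m+1})<p_1<\dots<p_{n+1}$, which has $n+m+2$ elements, and calls it ``a maximal chain''. That gives $\leng(\mathcal P*\mathcal Q)\ge n+m+1$. However, a chain that cannot be refined need not have maximum length, so the reverse inequality is only asserted there.

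Your lower bound is the same idea with a different witness: a staircase through $\mathcal P\times\mathcal Q$ capped by $p_n$, instead of the fibre $\{p_1\}\times\mathcal Q$ followed by a longest chain of $\mathcal P$. Either witness works. What your proposal adds is the missing upper bound. It comes from three ingredients: the decomposition $C=C_0\sqcup C_1$ with $C_0$ an initial segment, the strict growth of $h_{\mathcal P}+h_{\mathcal Q}$ along $C_0$, and the estimate $|C_1|\le n-h_{\mathcal P}(p)+1$.

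The step you flag at the end is routine. No element of $\mathcal P\sqcup\mathcal Q$ lies below an element of $\mathcal P\times\mathcal Q$. Hence any generating path from $(p,q)$ up to $p'$ consists of product-order steps, then a single step $(a,b)<a$, then steps inside $\mathcal P$. So $(p,q)<p'$ holds exactly when $p\le p'$. This matches the morphisms $(0,(p,q))\to(1,p')$ of the Grothendieck construction in the paper's join lemma.

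Keep your explicit description of the order, with the non-strict product order on pairs. The paper's later remark describing $\mathcal P*\mathcal P$ writes strict inequalities in both coordinates, and read literally that order already violates the formula for two $2$-element chains.
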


\begin{proof}
By the hypothesis of the lemma, we have a maximal chain in $\CP$ $p_1<\dots<p_{n+1}$ and a maximal chain in $\CQ$ $q_1<\dots<q_{m+1}$. Then, a maximal chain in $\mathcal{P}*\mathcal{Q}$ is $$(p_1,q_1)<\dots<(p_1,q_{m+1})<p_1<\dots<p_{n+1},$$ so $\leng(\mathcal{P}*\mathcal{Q})=n+m+1.$
\end{proof}

\begin{Coro}
    Let $(\mathcal{Q}, G)$ be a simple complex of groups with $\varphi\colon \hat{G}\to K$ and associated poset $\mathcal{\mathcal P}$, then $\leng(\mathcal Q_1)\leq 2\leng(\CP)+1.$
\end{Coro}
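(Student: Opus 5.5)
The plan is to realize $\mathcal Q_1$ as a subposet of the join $\mathcal P*\mathcal P$ and then apply Lemma \ref{lengjoin} with $\mathcal P$ in both factors.

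First I will check that $\mathcal Q_1$ really sits inside $\mathcal P_0*\mathcal P_0$ as an induced subposet. By construction its objects are among those of $\mathcal P_0*\mathcal P_0$: the elements $((q,eG(q)),0)$, $(0,(q',\alpha_iG(q')))$ and $((q,eG(q)),(q',\alpha_iG(q')))$. The order on $\mathcal Q_1$ is the one making $G_1$ a functor and making the functors $V$, $W$ of the preceding proposition well defined, and this is the order restricted from Remark \ref{def:joinofsystems}. Consequently any chain $c_0<c_1<\dots<c_r$ in $\mathcal Q_1$ is also a chain in $\mathcal P*\mathcal P$. This gives the monotonicity statement
\[
\leng(\mathcal Q_1)\le \leng(\mathcal P*\mathcal P).
\]

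Next I apply Lemma \ref{lengjoin} with both factors equal to $\mathcal P$, which gives $\leng(\mathcal P*\mathcal P)=2\leng(\mathcal P)+1$. Combining this with the inequality above yields the claim. If $\leng(\mathcal P)=\infty$ there is nothing to prove.

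The main point needing care is that Lemma \ref{lengjoin} as written only produces \emph{one} chain of length $n+m+1$, whereas I need an upper bound. I would therefore verify the bound directly. Take an arbitrary chain in $\mathcal P*\mathcal P$ and split it into three parts: the elements in $\mathcal P\times\mathcal P$, the elements in the first copy of $\mathcal P$, and the elements in the second copy. By the relations in Definition \ref{r:join}, product elements lie below elements of the copies of $\mathcal P$. No element of one copy is comparable with an element of the other copy, so the chain meets at most one copy. The copy part therefore has at most $n+1$ elements.

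The product part is a chain in $\mathcal P\times\mathcal P$ ordered componentwise, and the relation there is strict in both coordinates, as stated in Remark \ref{def:joinofsystems}. Hence the first coordinates form a strictly increasing chain in $\mathcal P$, so the product part also has at most $n+1$ elements.

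Altogether the chain has at most $2n+2$ elements, that is, length at most $2n+1$. The only subtlety is the componentwise order convention; with the strict product order used in the paper, the bound $2n+1$ follows.
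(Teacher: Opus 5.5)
Your top-level argument, viewing $\mathcal Q_1$ as a subposet of $\mathcal P*\mathcal P$ and invoking Lemma \ref{lengjoin}, is exactly the paper's proof. You are also right that the proof of Lemma \ref{lengjoin} only exhibits one chain of length $n+m+1$ and never shows that no longer chain exists.

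\textbf{Your patch for the upper bound does not work with the actual order of the join.} It reads the product order in Remark \ref{def:joinofsystems} literally, as strict in \emph{both} coordinates. That is not the order of $\mathcal P*\mathcal P$. Definition \ref{r:join} and the proof of $|\mathcal P*\mathcal Q|\simeq|\mathcal P|*|\mathcal Q|$ use the Grothendieck construction of $\mathcal P\leftarrow\mathcal P\times\mathcal Q\rightarrow\mathcal Q$ with the categorical product. There, $(p,q)\le(p',q')$ iff $p\le p'$ and $q\le q'$, and $(p,q)\le p'$ iff $p\le p'$. Three things confirm this:
\begin{enumerate}
\item The chain displayed in Lemma \ref{lengjoin}, $(p_1,q_1)<(p_1,q_2)<\dots$, moves only the second coordinate.
\item Under the strict-strict order, $\{*\}\times\mathcal Q$ would be discrete, so its realization would not be $|\mathcal Q|$.
\item Under the strict-strict order, the equality in Lemma \ref{lengjoin} would fail in general. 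So you cite that equality while using an order under which it is false.
\end{enumerate}
With the correct order, a chain in $\mathcal P\times\mathcal P$ can have $2n+1$ elements. Your separate counts then give only $(2n+1)+(n+1)$ elements, i.e.\ length $\le 3n+1$, which does not prove the claim.

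\textbf{The fix is to track one coordinate across the whole chain} instead of bounding the product part and the copy part separately. A chain that reaches the first copy has the form $(p_0,q_0)<\dots<(p_r,q_r)<p'_0<\dots<p'_s$ with $p_r\le p'_0$.
\begin{enumerate}
\item The sequence $p_0\le\dots\le p_r\le p'_0<\dots<p'_s$ is weakly increasing in $\mathcal P$, so it has at most $n$ strict steps.
\item These strict steps account for all $s$ steps inside the copy and for every product step that moves the first coordinate.
\item Every other product step strictly increases the second coordinate, and $q_0\le\dots\le q_r$ has at most $n$ strict steps.
\end{enumerate}
Hence $r+s\le 2n$, and the chain has length $r+s+1\le 2n+1$. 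Chains ending in the second copy are handled symmetrically. Chains lying entirely in the product have length at most $2n$ by the same count, and chains lying entirely in a copy have length at most $n$. The same argument gives $\leng(\mathcal P*\mathcal Q)\le n+m+1$ in general, and your subposet step then finishes the proof.
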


\begin{proof}
By definition, $\mathcal{Q}_1$ is a subposet of $\mathcal{P}*\mathcal{P}$. Hence, $\leng(\mathcal Q_1)\leq\leng(\mathcal{P}*\mathcal{P})$. By Lemma \ref{lengjoin}, $\leng(\mathcal{P}*\mathcal{P})=\leng(\CP)+\leng(\CP)+1,$ and the result follows.
\end{proof}

\section{Mod $p$ and $p$-adic cohomology theory of simple complexes of groups}\label{C:ModpCohomology}\hfill

The mod $p$ cohomology of Coxeter groups (not necessarily finite) enjoys finiteness properties. A property of these groups is that they have a reflection representation, which gives a faithful complex representation $W\hookrightarrow U(n)$. The argument used for finite groups to show that their mod $p$ cohomology is Noetherian also applies in this case. %

This section closely follows the strategy used by Kitchloo and Broto in \cite{BK:Kac-Moody_groups} in their study of the mod $p$ cohomology of classifying spaces of Kac-Moody groups.

We now state the main result of this section. It establishes a finite generation theorem for the mod $p$ cohomology of the Borel construction associated to a simple complex of groups and its induced $K$-action on the corresponding poset.

\begin{Teo}\label{main}
    Let $G$ be a simple complex of groups over a finite poset, with an homomorphism $\varphi \colon \hat{G}\to K$ and an associated $K$-poset $\mathcal P$ defined as in Definition \ref{def:associatedGspace}. The mod $p$ cohomology $H^*(|\mathcal P|_{hK}; \mathbb{F}_p)$ is a finitely generated $\mathbb{F}_p$-algebra.
\end{Teo}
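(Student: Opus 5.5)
The plan is to use the homotopy colimit description of Theorem \ref{mainHomotopy}, $|\mathcal P|_{hK}\simeq \hocolim_{\mathcal Q^{op}} BG(q)$, together with its Bousfield--Kan spectral sequence
\[
E_2^{s,t}=\varprojlim{}^s_{\mathcal Q} H^t(BG(q);\mathbb F_p)\Longrightarrow H^{s+t}(|\mathcal P|_{hK};\mathbb F_p).
\]
Throughout I assume, as implicit in the statement, that the local groups $G(q)$ are finite. Since $\mathcal Q$ is finite, the higher limits can be computed by a finite cochain complex whose terms are finite products of the $H^*(BG(q))$. Hence $E_2^{s,*}=0$ for $s>\leng(\mathcal Q)$, and the spectral sequence has only finitely many nonzero columns, so it collapses at a finite page. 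It is a spectral sequence of algebras, and by Theorem \ref{fgRalg} each $H^*(BG(q))$ is a Noetherian unstable algebra. The goal is to find a Noetherian algebra $R$, mapping to $H^*(|\mathcal P|_{hK})$, over which everything is a finitely generated module.

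\emph{Step 1: the limit algebra.} Let $L=\varprojlim_{\mathcal Q} H^*(BG(q))$ and let $d$ be the maximal $p$-rank among the $G(q)$. By Lannes' theory, $s_d(H^*BG(q))=\Hom_{\mathcal K}(H^*BG(q),H^*(V))$ is identified with $\Rep(V,G(q))$. The maps induced by the inclusions $G(q)\hookrightarrow G(q')$ preserve kernels, because the kernel of a class $[\rho]$ is just $\ker\rho$ and composing with an injection does not change it. Moreover $s_d$ turns the limit of algebras into the colimit of $\End(V)$-sets. By Theorem \ref{b_dTheo} each $s_d(H^*BG(q))$ is Noetherian, so Proposition \ref{p:colimNoetherian} shows that $S=\colim_{\mathcal Q} s_d(H^*BG(q))$ is a Noetherian $\End(V)$-set, and therefore $b_d(S)$ is a Noetherian unstable algebra. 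I then show that the natural map $L\to b_d(S)$ is an $F$-isomorphism, using Proposition \ref{prop:FisoEndViso} and Theorem \ref{bdsd:fiso} (or directly by Quillen's Theorem \ref{quillen:fiso} applied objectwise). From this I extract a finitely generated subalgebra $R_0\subset L$ over which $b_d(S)$ is finite. This works because every element of $b_d(S)$ has a $p^n$-th power in the image of $L$, and $b_d(S)$ is finitely generated; so one takes $R_0$ to be generated by suitable $p$-power preimages of the generators. Consequently each $H^*(BG(q))$, being finite over the image of $L$ (each component receives an $F$-epimorphism and is Noetherian), is a finite $R_0$-module.

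\emph{Step 2: finiteness of the spectral sequence.} Each column $E_2^{s,*}$ is a subquotient of a finite product of finite $R_0$-modules, hence a finite $R_0$-module since $R_0$ is Noetherian. The same holds for all later pages and for $E_\infty$. Next I need $R_0$, or a power of it, to act through $H^*(|\mathcal P|_{hK})$. The edge map $H^*(|\mathcal P|_{hK})\to E_2^{0,*}=L$ is an $F$-epimorphism. Indeed, the differentials are derivations and only finitely many pages are nontrivial, so for $x\in E_2^{0,*}$ the power $x^{p^N}$ is a permanent cycle, with $N$ depending only on $\leng(\mathcal Q)$. Replace $R_0$ by the subalgebra $R$ generated by the $p^N$-th powers of its generators, lifted to $H^*(|\mathcal P|_{hK})$. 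Then $R$ is Noetherian, $R_0$ is finite over it, and $E_\infty$ is a finite $R$-module.

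\emph{Step 3: conclusion and main obstacle.} The filtration of $H^*(|\mathcal P|_{hK})$ has finite length, and its associated graded $E_\infty$ is a finite $R$-module compatibly with the filtration. Hence $H^*(|\mathcal P|_{hK})$ is a finite module over the finitely generated algebra $R$, and so it is a finitely generated $\mathbb F_p$-algebra. I expect the main obstacle to be Step 1. Identifying $s_d$ of each local cohomology with $\Rep(V,G(q))$ and checking kernel preservation is routine, but the delicate part is passing from the $F$-isomorphism $L\to b_d(S)$ to an honest finiteness statement. Specifically, one must produce a Noetherian $R_0\subset L$ over which all $H^*(BG(q))$ are finite; here I would argue via Noetherianity of $b_d(S)$ and integrality ($p$-th power) arguments.
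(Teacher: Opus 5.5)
\textbf{There is a genuine gap in Step 1, at the point you yourself flag as delicate.} You justify ``each $H^*(BG(q))$ is finite over the image of $L$'' by saying each component receives an $F$-epimorphism. That is false: the restriction $L\to H^*(BG(q))$ is in general not an $F$-epimorphism.

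Take $\mathcal Q=\{a<b\}$, $G(a)=\mathbb Z/3\subset G(b)=\Sigma_3$ and $p=3$. Then $L=H^*(B\Sigma_3;\mathbb F_3)$. Its image in $H^*(B\mathbb Z/3;\mathbb F_3)=\Lambda(x)\otimes\mathbb F_3[y]$ is the invariant subalgebra $\Lambda(xy)\otimes\mathbb F_3[y^2]$, which contains no power $y^{3^n}$. Here $H^*(B\mathbb Z/3)$ happens to be finite over the image, but not for the reason you give.

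The integrality and $p$-th power arguments you propose cannot supply this step either. They only compare $L$ with $b_d(S)$: since $\ker(L\to b_d(S))$ is nil, they show $L$ is integral over $R_0$. They say nothing about how large $H^*(BG(q))$ is over the image of $L$. That finiteness is a separate nondegeneracy condition: kernels must not grow under $\Rep(V,G(q))\to S$.

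\textbf{The missing idea is the one the paper uses.} It combines the ``moreover'' clause of Proposition \ref{p:colimNoetherian} with the Henn--Lannes--Schwartz criterion:
\begin{itemize}
\item[(i)] the map $\Rep(V,G(q))\to\colim_{\mathcal Q}\Rep(V,G(q'))\cong S$ preserves kernels;
\item[(ii)] a morphism of Noetherian unstable algebras is finite when the induced maps on $s_d$ preserve kernels (the form used by Broto--Kitchloo).
\end{itemize}
The paper applies (ii) to a finitely generated sub-$\mathcal A_p$-algebra $C\subseteq H^*(|\mathcal P|_{hK})$ produced by Proposition \ref{general}. $C$ must be an unstable algebra for the criterion to make sense, and your $R_0$ is not one.

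In your setup you can repair the step as follows. Apply (ii) to $b_d(S)\to b_d(s_d(H^*BG(q)))$, where both algebras are Noetherian by Theorem \ref{b_dTheo}. Then use two facts: $H^*BG(q)\to b_d(s_d(H^*BG(q)))$ has nil kernel, and $K\to b_d s_d K$ is natural. Together these show $H^*BG(q)$ is integral, hence finite, over $R_0$.

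With that repair, your Steps 2 and 3 go through and essentially match the paper's concluding argument. Those steps lift $p^N$-th powers to $H^*(|\mathcal P|_{hK})$ and run the finitely many columns of the spectral sequence as finite modules over a Noetherian ring.
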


\begin{Obs}\label{Noetherian-fingen}
Let \(A = \bigoplus_{n \geq 0} A_n\) be an \(\mathbb{N}\)-graded ring. Then \(A\) is Noetherian if and only if \(A_0\) is Noetherian and \(A\) is finitely generated as a ring over \(A_0\) \cite[Theorem~13.1]{Matsumura89}. In particular, let $R$ be either $\hat{\mathbb{Z}}_p$ or $\mathbb{F}_p$, which are Noetherian rings. The cohomology $H^*(Y;R)$ of any connected space $Y$ will be a graded algebra. Thus, the previous Theorem applies, and $H^*(Y;R)$ is a Noetherian $R$-algebra if and only if it is finitely generated as an $R$-algebra. It will be important to have this in consideration for the rest of the paper.
\end{Obs}

\begin{Not}
As in the previous sections, we write either $H^*X$ or $H^*(X)$ for the mod $p$ cohomology $H^*(X;\mathbb{F}_p)$ of a space $X$. Note also that $V$ will denote an elementary abelian $p$-group.
\end{Not}

To prove Theorem \ref{main}, we will use the results of the previous section (Theorem \ref{mainHomotopy}), and we will show that $H^*(\hocolim_{\mathcal{Q}}  BG(q))$ is Noetherian. We will make use of the Bousfield-Kan spectral sequence, which is a multiplicative spectral sequence 
\begin{equation}
    \lim_{q\in \mathcal{Q}}{ }^i H^j(BG(q))\Rightarrow H^{i+j}(|\mathcal P|_{hK}).
\end{equation}
We describe the different steps we will follow to achieve the conclusion:
\begin{enumerate}
    \item\label{F} The edge homomorphism in the Bousfield-Kan spectral sequence $$ H^*(|\mathcal P|_{hK})\to  \lim_{q\in \mathcal{Q}} H^*(BG(q))$$ is an $F$-isomorphism.
    \item\label{S.three} The $\textrm{End}(V)$-set $s_d(H^*(|\mathcal P|_{hK}))$ is Noetherian for any $d$.
    \item\label{point}  There exists a finitely generated sub-$\mathcal{A}_p$-algebra $C\subseteq H^* (|\mathcal P|_{hK})$ with the following property: for every $q\in\mathcal{Q}$, $H^*(BG(q))$ is a finite $C$-module via the map $C\subseteq H^* (|\mathcal P|_{hK})\to H^*(BG(q))$.
\end{enumerate}

\subsection{The $F$-isomorphism}\label{sec:Fiso}

The algebraic general situation that will produce the $F$-iso\-mor\-phism we are interested in Step \ref{F} is standard.

Let $\{E_2^{*,*}\}$ be a first quadrant multiplicative cohomological spectral se\-quen\-ce of $\mathbb{F}_p$-modules converging to $H^*$. That is, $E_\infty$ recovers the graded module associated to a filtration of $H^*$. In particular, there is an edge homomorphism $$\rho \colon H^*\rightarrow E^{0,*}_\infty\subset E^{0,*}_2.$$

The following proposition is well-known.

\begin{Prop}\label{prop:Fiso_SS}
    Let $\{E_2^{*,*}\}$ be a first quadrant multiplicative cohomological spec\-tral sequence of $\mathbb{F}_p$-algebras converging to $H^*$. If there is $N>0$ such that $E_2^{p,q}=0$ for $p>N$, then the edge homomorphism $\rho$ is an $F$-isomorphism.
\end{Prop}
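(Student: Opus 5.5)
We have to show two things about $\rho$: its kernel consists of nilpotent elements, and for every $y\in E_2^{0,*}$ some power $y^{p^k}$ lies in the image of $\rho$. Both follow from one fact about the filtration. Let $H^*=F^0\supseteq F^1\supseteq\cdots$ be the filtration with $F^s/F^{s+1}\cong E_\infty^{s,*}$. Multiplicativity gives $F^s\cdot F^t\subseteq F^{s+t}$. We have $E_\infty^{s,*}=0$ for $s>N$, since it is a subquotient of $E_2^{s,*}$. In a first quadrant spectral sequence the filtration of each fixed total degree is finite, so $F^{N+1}=0$ degreewise.

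\emph{Kernel.} The kernel of $\rho\colon H^*\to E_\infty^{0,*}$ is exactly $F^1$. If $x\in F^1$, then $x^{N+1}\in F^{N+1}=0$, so $x$ is nilpotent.

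\emph{Cokernel, step one.} Take $y\in E_2^{0,*}$. I first show that $y^{p^k}$ is a permanent cycle for $k$ large. Differentials are derivations, and we are in characteristic $p$. So if $y$ survives to $E_r$, then
\[
d_r(y^p)=p\,y^{p-1}d_r(y)=0 .
\]
The Leibniz rule holds with signs, but $y$ may be taken in even degree, or one uses $p=2$ arguments, or squares first. Hence $y^p$ survives to $E_{r+1}$. Also, $d_r$ on $E_r^{0,*}$ lands in $E_r^{r,*-r+1}$, which vanishes once $r>N$. Iterating, $y^{p^{N}}$ survives to $E_{N+1}=E_\infty$. This gives $y^{p^N}\in E_\infty^{0,*}$.

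\emph{Cokernel, step two.} Since $E_\infty^{0,*}=F^0/F^1$ is the image of $\rho$, the element $y^{p^N}$ is in the image of $\rho$.

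\emph{Main obstacle.} The delicate point is the odd-degree/sign issue in the derivation property. One option is to note that for $p$ odd and $y$ of odd degree, $y^2=0$ already. The cleaner option is to apply the argument to $y^2$ (or to $y$ directly when $p=2$), since elements of even total degree commute strictly. Once this is handled, finiteness of the filtration length is what closes the argument, and that is exactly where the hypothesis $E_2^{p,q}=0$ for $p>N$ enters.
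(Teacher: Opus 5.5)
Your argument is correct. The paper gives no proof of this proposition; it only calls it well known. What you have written is the standard argument it relies on: $\ker\rho=F^1$ with $(F^1)^{N+1}\subseteq F^{N+1}=0$, and $p$-th powers of column-zero classes pass successively through the differentials by the Leibniz rule until the collapse at $E_{N+1}=E_\infty$.

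Two small points. For $p$ odd and $y$ of odd degree, graded commutativity gives $y^2=0$, hence $y^p=0$, and that is what settles the sign issue. ``Squaring first'' does not do this, since it only puts $y^{2p^N}$ in the image, which is not a $p$-power of $y$. Also, $y^{p^{N-1}}$ already suffices.
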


\begin{Obs}
    Note the previous result Proposition \ref{prop:Fiso_SS} is also valid if we replace $\mathbb{F}_p$ by a field $k$ of characteristic $p$.
\end{Obs}

\begin{Obs}\label{remark:Niso}
    If $E_2^{0,*}$ has an exponent, that is, there is an $K>0$ such that $K\cdot E_2^{i,n}=0$ for all $n>0$, all $i$. Then the proof adapts to show that the edge morphism has the property that if $y\in \ker\rho$ then there is $L\geq 0$ such that $y^{K^L}=0$.
\end{Obs}

\begin{Coro}
    Under the hypothesis of Proposition \ref{prop:Fiso_SS}, $H^*$ and $E^{0,*}_2$ have the same Krull dimensions and transcendence degree.
\end{Coro}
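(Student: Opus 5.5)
The plan is to deduce the Corollary directly from Proposition \ref{prop:Fiso_SS}, which says that the edge homomorphism $\rho\colon H^*\to E_\infty^{0,*}\subset E_2^{0,*}$ is an $F$-isomorphism. Everything then reduces to the purely algebraic fact that an $F$-isomorphism of graded commutative $\mathbb{F}_p$-algebras preserves both transcendence degree and Krull dimension.

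First I unwind what the $F$-isomorphism property gives. Every element of $\ker\rho$ is nilpotent. For every $z\in E_2^{0,*}$ there is some $n\geq 0$ with $z^{p^n}\in\operatorname{Im}\rho$.

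For the transcendence degree, take homogeneous elements $y_1,\dots,y_r\in H^*$ that are algebraically independent. Suppose $P(\rho(y_1),\dots,\rho(y_r))=0$ for a nonzero homogeneous polynomial $P$. Then $P(y_1,\dots,y_r)$ lies in $\ker\rho$, so it is nilpotent, say $P(y)^{N}=0$. Since $P^N\neq 0$ in the polynomial ring, this contradicts independence. Hence $d(H^*)\le d(E_2^{0,*})$.

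Conversely, take independent homogeneous elements $z_1,\dots,z_r\in E_2^{0,*}$. Choose a common $n$ with $z_i^{p^n}=\rho(y_i)$. The powers $z_i^{p^n}$ are still algebraically independent, because $\mathbb{F}_p[z^{p^n}]\subset\mathbb{F}_p[z]$. A relation among the $y_i$ would map to a relation among the $z_i^{p^n}$, so the $y_i$ are independent. Hence $d(E_2^{0,*})\le d(H^*)$.

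For the Krull dimension I would argue through prime spectra. Prime ideals contain all nilpotents, so $\operatorname{Spec}(H^*)=\operatorname{Spec}(H^*/\ker\rho)$. It therefore suffices to compare $A=\operatorname{Im}\rho$ with $B=E_2^{0,*}$. The extension $A\subset B$ is integral, since each $z\in B$ satisfies $T^{p^n}-z^{p^n}=0$ with $z^{p^n}\in A$. It is moreover purely inseparable in the sense that $p^n$-th powers land in $A$, so $\operatorname{Spec}B\to\operatorname{Spec}A$ is a homeomorphism (even a bijection via $\mathfrak q\mapsto\mathfrak q\cap A$, with inverse $\mathfrak p\mapsto\sqrt{\mathfrak pB}$). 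Going-up and incomparability for integral extensions already give $\dim A=\dim B$. Thus $\dim H^*=\dim A=\dim B$.

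The only subtle point is the quantifier: the exponent $n$ may depend on the element. For the transcendence argument I only need finitely many elements, so a common $n$ exists. For the Krull dimension argument integrality is checked elementwise, so no uniformity is needed. I do not need Noetherianity anywhere, which matters because at this stage we do not yet know $H^*$ is Noetherian.
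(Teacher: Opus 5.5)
Your proposal is correct and follows the route the paper intends. The paper states this corollary without proof, as an immediate consequence of Proposition \ref{prop:Fiso_SS}, namely that an $F$-isomorphism preserves transcendence degree and Krull dimension. Your argument supplies that standard algebraic verification: the nilpotent kernel gives one inequality for transcendence degree, $p$-power surjectivity gives the other, and integrality of $\operatorname{Im}\rho\subset E_2^{0,*}$ with the induced bijection on spectra handles Krull dimension. You are also right that no Noetherian hypothesis is needed.
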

    
We will apply this result to the Leray spectral sequence associated to the map $\hocolim_{\mathcal{Q}^{op}} EK\times_K F(q)\rightarrow |\mathcal{Q}|$. In this spectral sequence, the $E_2$-term is the cohomology of $|Q|$ with twisted coefficients, $H^p(\mathcal Q; \mathcal H^q)$. In particular, $H^0(\mathcal Q; \mathcal H^q)=\lim_{{\mathcal Q}} \hbgi$. This is also known as the Bousfield-Kan spectral sequence. 

Recall that $|\mathcal{Q}|$ is a finite poset, therefore it has a finite filtration by skeleta, and the corresponding spectral sequence for mod $p$ cohomology satisfies the hypothesis of Proposition \ref{prop:Fiso_SS}. Then higher limits of a contravariant functor $\lim_\mathcal{Q}^i F$ are com\-put\-ed as the homology of a complex
\begin{align*}\label{diag_resolutionlim}
\prod_{q_i} F(q_i)\leftarrow \prod_{q_i\to q_j} F(q_j) \leftarrow \prod_{q_i\to q_j \to q_k} F(q_k)\leftarrow \cdots
\end{align*} That is, we have the following lemma.

\begin{Lema}\label{lem:bounded limits}
    Let $\mathcal{Q}$ be a poset of finite length and $F\colon \mathcal Q^{op} \rightarrow R-\Mod$ where $R$ is a commutative ring. Then there exists $N>0$ such that $lim {}^{i}F=0$ for all $i>N$. 
\end{Lema}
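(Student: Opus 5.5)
The plan is to compute $\lim^i F$ with the bar-type cochain complex displayed just before the statement, and then to replace it by a smaller complex that uses only chains of strict inclusions. Since $\mathcal Q$ is a poset, its nerve $N\mathcal Q$ is a simplicial set. Its $n$-simplices are the composable strings $q_0\to q_1\to\cdots\to q_n$, and these may contain identities. The cochain complex
\[
C^n(\mathcal Q;F)=\prod_{q_0\to\cdots\to q_n} F(q_n)
\]
with alternating-sum differential computes $\lim^i F$, by the standard identification of higher limits with cohomology of $N\mathcal Q$ with coefficients in $F$. This is the complex recalled right before the lemma, and I take this identification as given.

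Next I pass to normalized cochains. I will use the subcomplex $N^*(\mathcal Q;F)\subseteq C^*(\mathcal Q;F)$ of cochains that vanish whenever a degeneracy is applied. Equivalently, it is the product $\prod F(q_n)$ taken only over \emph{nondegenerate} strings, namely chains $q_0<q_1<\cdots<q_n$ with all inclusions strict. The normalization theorem (Dold--Kan, or Eilenberg--Mac Lane for cosimplicial abelian groups) says the inclusion $N^*\hookrightarrow C^*$ is a quasi-isomorphism. It holds for any cosimplicial $R$-module, in particular for $n\mapsto C^n(\mathcal Q;F)$. So $\lim^i F\cong H^i(N^*(\mathcal Q;F))$.

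Now set $N=\leng(\mathcal Q)$, which is finite by hypothesis. A strict chain $q_0<\cdots<q_n$ has $n+1$ distinct elements, so by Definition \ref{lengposet} we get $n\le N$. Hence $N^n(\mathcal Q;F)$ is an empty product, that is $0$, for all $n>N$, and therefore $\lim^i F=0$ for $i>N$. This gives the statement with that $N$; if a strictly positive constant is wanted, use $\max(N,1)$. Note that the argument does not need $\mathcal Q$ finite or $F$ finitely generated, only finite length. This matters later, because the joined posets $\mathcal Q_n$ are infinite but have controlled length.

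The main obstacle is the normalization step, meaning the justification that degenerate strings may be discarded. I would cite the normalized cochain theorem directly. Alternatively, the same bound follows from a geometric route: $\lim^iF=H^i(|\mathcal Q|;\mathcal F)$ for the associated coefficient system, and $|\mathcal Q|$ is a simplicial complex of dimension $\leng(\mathcal Q)$ whose cellular cochain complex with local coefficients vanishes above that dimension.
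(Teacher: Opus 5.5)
Your proof is correct and is essentially the paper's argument. The paper's one-line proof (taking $N=\leng(\mathcal Q)+1$) relies implicitly on the complex displayed before the lemma vanishing in degrees beyond the length of $\mathcal Q$; your normalization to strict chains supplies exactly that unstated justification, since the unnormalized complex with identity strings is in general nonzero in every degree, and it gives the slightly sharper bound $\leng(\mathcal Q)$, with $\max(\leng(\mathcal Q),1)$ to meet $N>0$.
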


\begin{proof}
    Set $N=\leng (\CQ)+1$.
\end{proof}

\begin{Obs}
   If $\mathcal{Q}$ is finite and $F(q)$ are finitely generated $R$-modules with $R$ Noe\-the\-ri\-an, then we can say more: $\lim {}^{i}F=0$ are finitely generated $R$-modules.
\end{Obs}

We are now in a position to establish Step \ref{F} of the proof of Theorem \ref{main}.

\begin{Prop}\label{prop:Fiso_Q}
    There is an $F$-isomorphism 
    \begin{equation}
        H^* (|\mathcal P|_{hK};k)\xrightarrow{\rho} \lim_{{\mathcal Q}} H^* (BG(q);k)
    \end{equation}
    where $k$ is a field of characteristic $p$.
\end{Prop}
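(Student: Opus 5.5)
The plan is to realize $\rho$ as the edge homomorphism of the Bousfield--Kan spectral sequence for the homotopy colimit description of the Borel construction, and then apply Proposition \ref{prop:Fiso_SS}, in the version for a field $k$ of characteristic $p$ noted in the subsequent remark.

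First, by Theorem \ref{mainHomotopy} we have
\[
|\mathcal P|_{hK}\simeq \hocolim_{\mathcal Q^{op}} EK\times_K F(q), \qquad EK\times_K F(q)\simeq BG(q).
\]
The Bousfield--Kan (equivalently, Leray) spectral sequence of this homotopy colimit has the form
\[
E_2^{i,j}=\lim_{q\in\mathcal Q}{}^{i} H^j(BG(q);k)\Longrightarrow H^{i+j}(|\mathcal P|_{hK};k).
\]
It is a first quadrant spectral sequence, and it is multiplicative, because it is the spectral sequence of the filtration of $\hocolim$ by skeleta of $|\mathcal Q|$ with cup products and the diagram is a diagram of spaces. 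Its edge homomorphism $H^*(|\mathcal P|_{hK};k)\to E_\infty^{0,*}\subseteq E_2^{0,*}=\lim_{\mathcal Q}H^*(BG(q);k)$ is precisely the map induced by the inclusions $BG(q)\to |\mathcal P|_{hK}$, so it coincides with $\rho$.

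Second, we need the vanishing line. Since $\mathcal Q$ is a finite poset it has finite length. Lemma \ref{lem:bounded limits}, applied to the functor $q\mapsto H^j(BG(q);k)$ from $\mathcal Q^{op}$ to $k$-modules, gives $N=\leng(\mathcal Q)+1$ with $E_2^{i,j}=0$ for $i>N$ and all $j$. One computes the higher limits with the cochain complex of nondegenerate chains $q_0<\dots<q_i$; no such chains exist once $i$ exceeds the length. Proposition \ref{prop:Fiso_SS} then says that $\rho$ is an $F$-isomorphism.

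Since that proposition is only quoted as well known, I would also sketch its two halves. For injectivity up to nilpotents: an element $y\in\ker\rho$ lies in filtration $\geq1$, and multiplicativity forces $y^{N+1}$ into filtration $>N$, which is zero, so $y$ is nilpotent. For surjectivity up to $p$-th powers: for $x\in E_2^{0,*}$ each differential $d_r$ is a derivation, so $d_r(x^p)=p\,x^{p-1}d_r(x)=0$. Iterating, $x^{p^{m}}$ survives to $E_{N+2}=E_\infty$ for a suitable $m$ (about $N$ steps suffice), and hence lies in the image of $\rho$.

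The main obstacle is the identification in the first step. We must check that the spectral sequence attached to the homotopy colimit over $\mathcal Q^{op}$, which is built via Thomason's theorem and the commutation of homotopy colimits, is multiplicative. We must also check that its edge map is exactly the restriction map to $\lim H^*(BG(q))$, including compatibility with the equivalences $EK\times_K K/G(q)\simeq BG(q)$, which must be natural in $q$. I would address this by working directly with the skeletal filtration of $|\mathcal P|_{hK}\to|\mathcal Q|$.
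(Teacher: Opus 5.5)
Your proposal is correct and follows essentially the same route as the paper. You use the Bousfield--Kan spectral sequence of the homotopy colimit, the vanishing of $\lim^i$ above the length of the finite poset (Lemma \ref{lem:bounded limits}), the identification of the edge map with restriction to the $G(q)$, and then Proposition \ref{prop:Fiso_SS}. Your sketch of the two halves of that proposition, which the paper only cites as well known, is accurate: nilpotence of $\ker\rho$ follows from the finite multiplicative filtration, and $x^{p^m}$ survives because each $d_r$ is a derivation.
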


\begin{proof}
We use the description as a homotopy colimit. There is a first quadrant multiplicative spectral sequence with $E_2$-term given by 
$$E_2^{i,j}=\lim {}^{i}H^j(BG(q);k)$$
converging to $H^*( |\mathcal P|_{hK};k)$. Since the indexing category is a finite poset, we apply Lemma \ref{lem:bounded limits} to obtain that there is $N\geq 0$ such that $\lim {}^{i}H^j(BG(q);\mathbb F_p)=0$ for every $j$ and for all $i>N.$ In this case, the edge homomorphism 
coincides with the one induced by restricting along subgroups and $$H^*( |\mathcal P|_{hK};k) \xrightarrow{\rho} \lim_{{\mathcal Q}} H^*(BG(q);k)$$ is an $F$-isomorphism by Proposition \ref{prop:Fiso_SS}.  
\end{proof}

One direct consequence of the previous $F$-isomorphism theorem is the descrip\-tion of the spectrum of homogeneous prime ideals and the finiteness of the Krull dimension and of the transcendence degree.

\begin{Coro}
    Let $G$ be a simple complex groups over a finite poset, with $\varphi \colon \hat{G}\to K$ and an associated $K$-poset $\mathcal P$ defined as in Definition \ref{def:associatedGspace}. There is a homeomorphism 
    $$\colim_\mathcal{Q} \textrm{Spec}^h(H^*(BG(q)))\cong \textrm{Spec}^h(H^*(|\mathcal P|_{hK}))$$ and the Krull dimension of  $H^*(|\mathcal P|_{hK})$ is finite.    
\end{Coro}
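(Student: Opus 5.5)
The homeomorphism will follow from the $F$-isomorphism of Proposition \ref{prop:Fiso_Q}, and finiteness of the Krull dimension from the fact that $\mathcal Q$ is finite and each $G(q)$ is finite.

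\textbf{Step 1: the $F$-isomorphism induces a homeomorphism of spectra.} For a morphism $f\colon A\to B$ of graded commutative $\mathbb F_p$-algebras that is an $F$-isomorphism, the induced map $\mathrm{Spec}^h(B)\to\mathrm{Spec}^h(A)$ is a homeomorphism (Quillen's argument in \cite{quillen}). Injectivity and surjectivity hold because every element of $B$ has a $p^n$-th power in the image of $f$ and $\ker f$ is nilpotent. Closed sets correspond because $V(I)$ and $V(\sqrt I)$ agree. Applied to $\rho$, this gives
\[
\mathrm{Spec}^h\Bigl(\lim_{\mathcal Q} H^*(BG(q))\Bigr)\cong \mathrm{Spec}^h(H^*(|\mathcal P|_{hK})).
\]

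\textbf{Step 2 (the main obstacle): $\mathrm{Spec}^h$ converts the finite limit into a colimit.} We need
\[
\colim_{\mathcal Q}\mathrm{Spec}^h(H^*(BG(q)))\to\mathrm{Spec}^h\Bigl(\lim_{\mathcal Q}H^*(BG(q))\Bigr)
\]
to be a homeomorphism. $\mathrm{Spec}^h$ does not send limits to colimits in general, so I would first reduce to a Noetherian situation using Quillen's theorem (Theorem \ref{quillen:fiso}). Each $H^*(BG(q))$ is $F$-isomorphic to $\lim_{E\in\mathcal A_p(G(q))}H^*(BE)$. Hence $\lim_{\mathcal Q}H^*(BG(q))$ is $F$-isomorphic to the limit of $H^*(BE)$ over the Grothendieck category of the functor $q\mapsto\mathcal A_p(G(q))$, and both sides of the desired map reduce to colimits of the spaces $\mathrm{Spec}^h H^*(BE)=\mathrm{Spec}^h(\mathbb F_p[V]\otimes\Lambda)$, which are affine spaces modulo nilpotents.

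For such rings I would follow Quillen's stratification argument. Points of $\mathrm{Spec}^h$ of the limit are detected by maps to $\bar{\mathbb F}_p$-algebras. Each prime is the restriction of a prime from some $H^*(BE)$, because $\lim H^*(BE)\to\prod H^*(BE)$ is finite (integral), so lying-over and going-up apply. Identifications between such primes happen exactly along the morphisms of the indexing category. If the integrality argument becomes too delicate, I would use the $\End(V)$-set version of the same statement: by Proposition \ref{prop:FisoEndViso}, $\mathrm{Hom}_{\mathcal K}(\lim_{\mathcal Q}H^*BG(q),H^*V)=\colim_{\mathcal Q}\mathrm{Hom}_{\mathcal K}(H^*BG(q),H^*V)$. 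I would then recover the homogeneous primes as kernels of such maps after extending scalars to $\bar{\mathbb F}_p$.

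\textbf{Step 3: finiteness of the Krull dimension.} By the Corollary to Proposition \ref{prop:Fiso_SS}, $H^*(|\mathcal P|_{hK})$ and $\lim_{\mathcal Q}H^*(BG(q))$ have the same Krull dimension and transcendence degree. The limit embeds in the finite product $\prod_q H^*(BG(q))$, and this inclusion is integral because the limit contains the images of Chern classes of a faithful representation. So the dimension of the limit is at most
\[
\max_q d(H^*(BG(q)))=\max_q \operatorname{rk}_p G(q),
\]
using Quillen's corollary for each finite group. This bound is finite since $\mathcal Q$ is finite.

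If the integrality claim fails, I would use transcendence degree directly. Any algebraically independent homogeneous subset of the limit remains algebraically independent in some factor $H^*(BG(q))$, after an argument on products of domains modulo minimal primes. This gives the same bound, and since Krull dimension is at most transcendence degree, the dimension is finite.
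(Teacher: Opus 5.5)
Step 1, and the transcendence-degree fallback in your Step 3, are sound and are essentially the paper's argument. The paper records this corollary as an immediate consequence of the $F$-isomorphism in Proposition \ref{prop:Fiso_Q}, which is your Step 1. Its dimension bound is the finite-product/transcendence-degree argument of Corollary \ref{cor:tdfinite}, which is your fallback. That fallback needs no minimal primes: if $f_q\neq 0$ kills the tuple in the $q$-th factor, then $\prod_q f_q\neq 0$ kills it in $\prod_q H^*(BG(q))\supseteq L:=\lim_{\CQ}H^*(BG(q))$.

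The paper gives no argument for $\mathrm{Spec}^h(L)\cong\colim_{\CQ}\mathrm{Spec}^h H^*(BG(q))$. You rightly single this out as the real content, but Step 2 does not establish it. Its first ingredient, integrality, is justified by ``Chern classes of a faithful representation'', and no faithful representation of $K$ or $\hat G$ is available. This can be repaired when the $G(q)$ are finite, as is implicit here. Let $N$ be a common multiple of the $|G(q)|$ and set $\rho_q=(N/|G(q)|)\cdot\mathbb C[G(q)]$. These faithful representations are compatible up to isomorphism under restriction, because $\mathbb C[G(q')]$ restricts to $[G(q'):G(q)]$ copies of $\mathbb C[G(q)]$. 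So the maps $H^*(BU(N))\to H^*(BG(q))$ factor through $L$, and by Venkov every $H^*(BG(q))$, and every $H^*(BE)$ in your Grothendieck category, is finite over the image. Lying over and closedness of integral maps then make $\bigsqcup_q\mathrm{Spec}^h H^*(BG(q))\to\mathrm{Spec}^h(L)$ a closed surjection. This gives surjectivity, and it shows the comparison map is a homeomorphism once it is known to be injective.

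The genuine gap is injectivity. You must show that primes $\mathfrak p\subset H^*(BG(q))$ and $\mathfrak p'\subset H^*(BG(q'))$ with the same contraction to $L$ are joined by a zigzag of restriction maps. Your sentence ``identifications between such primes happen exactly along the morphisms of the indexing category'' is exactly this statement, asserted rather than proved. Lying over and going up say nothing about it. In Quillen's stratification theorem this identification is a real result that uses the ambient finite group, and there is none here. Reducing to elementary abelian subgroups only moves the same question to the Grothendieck category of $q\mapsto\mathcal A_p(G(q))$.

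The $\End(V)$-set fallback does not close the gap either. Proposition \ref{prop:FisoEndViso} gives invariance under $F$-isomorphisms, not $s_d(\lim)\cong\colim s_d$; that is a separate input, used in Proposition \ref{PropStep3}. Moreover, a general homogeneous prime is not (the radical of) the kernel of a map of unstable algebras $L\to H^*V$, since such primes are Steenrod-stable. You would need a Rector-type stratification that writes each homogeneous prime as $f^{-1}(\mathfrak q)$, with $f\colon L\to H^*V$ finite and $\mathfrak q\subset H^*V$ prime, and decides when two such pairs give the same prime. You would then need kernel preservation (Proposition \ref{p:colimNoetherian}) to match that indexing category with the one built from the $\mathcal A_p(G(q))$. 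As it stands, your proposal proves that the Krull dimension is finite, but only the surjective half of the homeomorphism.
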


\begin{Coro}\label{cor:tdfinite}
     Let $G$ be a simple complex groups over a finite poset, with $\varphi \colon \hat{G}\to K$ and an associated $K$-poset $\mathcal P$ defined as in Definition \ref{def:associatedGspace}. The tran\-scend\-ence degree $d(H^*(|\mathcal P|_{hK}))$ is finite.
\end{Coro}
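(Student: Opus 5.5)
The statement to prove is Corollary \ref{cor:tdfinite}: the transcendence degree $d(H^*(|\mathcal P|_{hK}))$ is finite. The plan is to transfer the question, via the $F$-isomorphism of Proposition \ref{prop:Fiso_Q}, to the algebra $L:=\lim_{\mathcal Q} H^*(BG(q))$. We then bound $d(L)$ by the ranks of elementary abelian $p$-subgroups of the finitely many groups $G(q)$. Implicitly, the $G(q)$ are finite groups (otherwise $H^*(BG(q))$ need not have finite transcendence degree), and $\mathcal Q$ is finite.

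\textbf{Step 1: $F$-isomorphisms preserve transcendence degree.} Let $f\colon A\to B$ be an $F$-isomorphism of graded commutative $\mathbb F_p$-algebras. This means every element of $\ker f$ is nilpotent, and for every $b\in B$ some $b^{p^n}$ lies in the image of $f$. We check both inequalities between $d(A)$ and $d(B)$.

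\emph{$d(A)\le d(B)$.} Let $a_1,\dots,a_r\in A$ be homogeneous and algebraically independent. Suppose $P(f(a_1),\dots,f(a_r))=0$ for some polynomial $P$. Then $P(a_1,\dots,a_r)$ lies in $\ker f$, so it is nilpotent. Some power $P(a_1,\dots,a_r)^m=0$, hence $P^m=0$ as a polynomial, hence $P=0$. So the $f(a_i)$ are algebraically independent in $B$.

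\emph{$d(B)\le d(A)$.} Let $b_1,\dots,b_r\in B$ be homogeneous and algebraically independent. Choose $n$ with $b_i^{p^n}=f(a_i)$ for all $i$. The elements $b_i^{p^n}$ remain algebraically independent, since $\mathbb F_p[b_1^{p^n},\dots,b_r^{p^n}]\subset\mathbb F_p[b_1,\dots,b_r]$. Any relation among the $a_i$ would map to a relation among the $f(a_i)$, so the $a_i$ are algebraically independent in $A$.

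Applying this to $\rho$ gives $d(H^*(|\mathcal P|_{hK}))=d(L)$. This is the content of the corollary after Proposition \ref{prop:Fiso_SS}.

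\textbf{Step 2: bound $d(L)$.} Since $\mathcal Q$ is finite, $L$ embeds as a subalgebra of the finite product $\prod_{q\in\mathcal Q}H^*(BG(q))$. It therefore suffices to show that a subalgebra of a finite product $\prod_i A_i$ has transcendence degree at most $\max_i d(A_i)$.

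Let $x_1,\dots,x_r$ be homogeneous algebraically independent elements with $r>\max_i d(A_i)$. For each $i$, the projections of the $x_j$ to $A_i$ satisfy a nonzero relation $P_i$. The product $P=\prod_i P_i$ is nonzero, and each component of $P(x_1,\dots,x_r)$ vanishes. So $P(x)=0$, a contradiction. Hence $d(L)\le\max_q d(H^*(BG(q)))$.

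By the Quillen corollary following Theorem \ref{quillen:fiso}, each $d(H^*(BG(q)))$ is the maximal rank of an elementary abelian $p$-subgroup of $G(q)$, which is finite. This gives the bound
\[
d(H^*(|\mathcal P|_{hK}))\le\max_{q\in\mathcal Q}\operatorname{rk}_p G(q)<\infty .
\]

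\textbf{Main obstacle.} The one step needing care is the product bound in Step 2: components can vanish, so we cannot pick a single coordinate carrying the independence. Taking the product of the componentwise relations handles this. Step 1 is the standard Frobenius argument, which uses that $\mathbb F_p$-algebras in characteristic $p$ have injective Frobenius on polynomial rings.
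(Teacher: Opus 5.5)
Your proof is correct and follows essentially the same route as the paper: transfer the transcendence degree along the $F$-isomorphism $\rho$ of Proposition \ref{prop:Fiso_Q}, embed $\lim_{\mathcal Q}H^*(BG(q))$ into the finite product $\prod_{\mathcal Q}H^*(BG(q))$, and use that each $d(H^*(BG(q)))$ is finite for the finite groups $G(q)$. Beyond the paper, you prove the two steps it leaves implicit (that an $F$-isomorphism preserves $d$, and that a subalgebra of a finite product has $d$ at most the maximum over the factors). You also cite Quillen's rank description where the paper cites Noetherianity, which gives the explicit bound by the maximal $p$-rank of the groups $G(q)$.
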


\begin{proof}
    We have that $d(H^*(|\mathcal P|_{hK}; \mathbb{F}_p))=d(\lim_{{\mathcal Q}} H^*(BG(q);k))$. Note that we have an inclusion into a finite product $\lim_{{\mathcal Q}} H^*(BG(q);k)\subseteq \prod_{\mathcal{Q}}H^*(BG(q);k)$, then we are reduced to prove the statement for $H^*(BG(q);k)$. Since $H^*(BG(q);k)$ is Noetherian, for each $q\in \mathcal{Q}$ we have that $d(H^*(BG(q);k))$ is finite and equal to the Krull dimension.
\end{proof}

The next goal is to study finiteness properties, such as being a Noetherian algebra. The strategy is to combine the $F$-isomorphism statement with \cite{BK:Kac-Moody_groups} and \cite[Proposition 5.2]{BLO:homotopy_fusion_systems}. We use the technology developed in \cite{HLSmodulonilpotents}.

We proceed to study $\textrm{End}(V)$-sets associated with $H^*(|\mathcal{P}|_{hK})$. The following proposition corresponds to Step \ref{S.three} of the proof of Theorem \ref{main}.

\begin{Prop}\label{PropStep3}
    Let $K=H^*(|\mathcal P|_{hK})$. For each $d$, there is an isomorphism of $\textrm{End}(V)$-sets, $$s_d(K)\cong \textrm{colim}_{\mathcal Q} \Rep(V,G(q)),$$ and $s_d(K)$ is Noetherian.
\end{Prop}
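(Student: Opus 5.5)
The plan is to compute $s_d(K)=\Hom_{\mathcal K}(K,H^*(V))$ in two stages. First we replace $K$ by an $F$-isomorphic algebra. Then we identify the homomorphism set of that algebra as a colimit of sets of representations.

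\textbf{Stage one: reduction to the limit.} By Proposition \ref{prop:Fiso_Q}, the edge map
\[
\rho\colon K\to L:=\lim_{\mathcal Q}H^*(BG(q))
\]
is an $F$-isomorphism. Proposition \ref{prop:FisoEndViso} then gives a bijection $s_d(L)\cong s_d(K)$. This bijection is natural in $V$, so it is an isomorphism of $\End(V)$-sets. It therefore suffices to compute $\Hom_{\mathcal K}(\lim_{\mathcal Q}H^*(BG(q)),H^*(V))$.

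\textbf{Stage two: the colimit formula.} Since $\mathcal Q$ is finite, each $G(q)$ is finite, and for each finite group Quillen--Lannes theory gives
\[
\Hom_{\mathcal K}(H^*(BG(q)),H^*(V))\cong \Rep(V,G(q)).
\]
This is naturally an $\End(V)$-set, and it is natural in $q$ via the inclusions $G(q)\subseteq G(q')$. Composition with the projections $L\to H^*(BG(q))$ produces a map
\[
\colim_{\mathcal Q}\Rep(V,G(q))\to s_d(L).
\]
The plan is to show this map is bijective, following the argument of \cite[Proposition 5.2]{BLO:homotopy_fusion_systems} and \cite{BK:Kac-Moody_groups}.

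\emph{Surjectivity.} Take $f\colon L\to H^*(V)$. Its kernel is a prime ideal of $L$ after passing to reduced parts. The corollary identifying $\mathrm{Spec}^h(K)$ with $\colim_{\mathcal Q}\mathrm{Spec}^h(H^*(BG(q)))$ shows that this prime comes from some $H^*(BG(q))$. The intended argument is then that $f$ factors through $L\to H^*(BG(q))$ up to $F$-isomorphism. Since $H^*(V)$ is reduced, nilpotent elements do not obstruct the factorization.

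\emph{Injectivity.} Two representations $\rho_1\colon V\to G(q_1)$ and $\rho_2\colon V\to G(q_2)$ may induce the same map on $L$. One must show they are identified in the colimit, i.e.\ connected by a zig-zag through the inclusions of $\mathcal Q$ together with conjugation inside each $G(q)$.

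I expect stage two, and injectivity in particular, to be the main obstacle. For it I would first try the standard method: evaluate on elements of $L$ that are restrictions of Chern classes or of invariants detecting the image subgroup $\rho(V)$, using the finiteness of $\mathcal Q$ to produce enough elements of the limit. If that fails, I would fall back on the cited results in \cite{BK:Kac-Moody_groups}, where the analogous identification is carried out for homotopy colimits of classifying spaces of finite groups over a finite poset.

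\textbf{Noetherianity.} Each $H^*(BG(q))$ is Noetherian by Theorem \ref{fgRalg}. Hence, by Theorem \ref{b_dTheo}, each $\Rep(V,G(q))\cong s_d(H^*(BG(q)))$ is a Noetherian $\End(V)$-set.

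For an inclusion $G(q)\subseteq G(q')$, the induced map $\Rep(V,G(q))\to\Rep(V,G(q'))$ preserves kernels. The reason is that the kernel of the class of $\rho$ is exactly $\ker\rho$, independently of the ambient group. Indeed, any factorization $\rho=\rho'\alpha$ forces $\ker\alpha\subseteq\ker\rho$, and $\rho$ factors as $\bar\rho\circ\pi$ with $\pi\colon V\to V/\ker\rho\hookrightarrow V$, which realizes the maximum.

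Proposition \ref{p:colimNoetherian} then applies to the finite poset $\mathcal Q$, so $\colim_{\mathcal Q}\Rep(V,G(q))$ is Noetherian. Hence $s_d(K)$ is Noetherian.
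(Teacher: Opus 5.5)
Your outline is the same as the paper's. You reduce to $L=\lim_{\CQ}H^*(BG(q))$ using Proposition \ref{prop:Fiso_Q} and Proposition \ref{prop:FisoEndViso}, identify $s_d(L)$ with $\colim_{\CQ}\Rep(V,G(q))$, and finish with Proposition \ref{p:colimNoetherian}. Your check that the kernel of $[\rho]$ equals $\ker\rho$ is correct, and more explicit than the paper. It also gives Noetherianity of each $\Rep(V,G(q))$ directly, since $\ker(\rho\alpha)=\alpha^{-1}(\ker\rho)$. One slip: finiteness of the groups $G(q)$ is a standing hypothesis of this section. It does not follow from finiteness of $\CQ$.

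The gap is stage two, which is the real content of the isomorphism. The paper simply asserts $s_d(\lim_{\CQ}H^*(BG(q)))=\colim_{\CQ}s_d(H^*(BG(q)))$, but your attempted justification does not work.

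\emph{Surjectivity.} Knowing that the prime $\sqrt{\ker f}=f^{-1}(\sqrt{0})$ is contracted from a prime of some $H^*(BG(q))$ does not produce a morphism $H^*(BG(q))\to H^*(V)$ restricting to $f$ on $L$. A morphism into $H^*(V)$ is not determined by its kernel: composing with $\alpha^*$ for $\alpha\in GL(V)$ preserves the kernel. Likewise, \emph{factoring up to $F$-isomorphism} does not yield an element of $\Rep(V,G(q))$.

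\emph{Injectivity.} This is left open altogether.

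The missing idea is Lannes' functor $T_V$.
\begin{enumerate}
\item $T_V$ is exact, and limits in $\mathcal K$ are computed on underlying unstable modules, so $T_V$ commutes with the finite limit defining $L$.
\item $\Hom_{\mathcal K}(M,H^*(V))\cong\Hom_{\mathcal K}(T_VM,\mathbb F_p)$, which is the set of algebra maps $(T_VM)^0\to\mathbb F_p$.
\item Lannes' computation gives $(T_VH^*(BG(q)))^0\cong\mathbb F_p^{\Rep(V,G(q))}$, naturally in $q$.
\end{enumerate}
Hence
\[
(T_VL)^0\cong\lim_{\CQ}\mathbb F_p^{\Rep(V,G(q))}\cong\mathbb F_p^{\colim_{\CQ}\Rep(V,G(q))},
\]
whose set of points is the finite set $\colim_{\CQ}\Rep(V,G(q))$. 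By naturality of the adjunction, this bijection is your comparison map, namely precomposition with the projections $L\to H^*(BG(q))$. So it is $\End(V)$-equivariant, and surjectivity and injectivity follow at once. You need neither Chern classes nor the stratification of $\mathrm{Spec}^h$.
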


\begin{proof}
We use the $F$-isomorphism established in Proposition \ref{prop:Fiso_Q}.
    \begin{equation*}
        H^*(|\mathcal P|_{hK}) \xrightarrow{} \lim_{{\mathcal Q}} H^*(BG(q))
    \end{equation*}
to conclude that $$s_d(K)\cong s_d(\lim_{{\mathcal Q}} H^*(BG(q))).$$ Moreover, $$s_d(\lim_{{\mathcal Q}} H^*(BG(q)))=\colim_{{\mathcal Q}} s_d(H^*(BG(q)))\cong \colim_{{\mathcal Q}} \Rep(V,G(q)).$$ The last equivalence follows from Lannes theory \cite{lannes1987cohomologie}, see \cite[Section 9.6]{Schwartz_1994}. So far, we have found that $s_d(K)\cong \colim_{{\mathcal Q}} \Rep(V,G(q)).$

Finally, $\Rep(V,G(q))$ is a Noetherian $\End(V)$-set for every $q\in \CQ$, and for every $i\colon q\rightarrow q'$, we have that $$\textrm{Rep}(V,G(i))\colon \Rep(V,G(q)) \to \Rep(V,G(q'))$$ is a morphism of $\End(V)$-sets that preserves kernels. So, as all the conditions of Proposition \ref{p:colimNoetherian} are satisfied, we can use it to affirm that $\colim_{{\mathcal Q}} \textrm{Rep}(V,G(q))$ is a Noetherian $\textrm{End}(V)$-set, that is, $s_d(K)$ is a Noetherian $\End(V)$-set.
\end{proof}

\subsection{Finiteness properties of mod $p$ cohomology via $F$-isomorphism}

In this setting, we have the following definition introduced in \cite{Powell}.

\begin{Def}
   An unstable algebra $K$ is Noetherian up to nilpotents if it has finite transcendence degree and, for any $d\in \mathbb{N}$, $s_d(K)$ is a Noetherian $\End(V)$-set, where $V=(\mathbb{Z}/p)^d$.
\end{Def}

\begin{Lema}
    Let $K$ be an unstable algebra which is Noetherian up to nilpotents. There is an $F$-isomorphism $f\colon K\rightarrow L$ where $L$ is unstable Noetherian $\mathbb{F}_p$-algebra.
\end{Lema}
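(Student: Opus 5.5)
The natural candidate for $L$ is the Henn--Lannes--Schwartz algebra $b_d(s_d(K))$, for a suitably large $d$. The idea is to read off Noetherianity from Theorem \ref{b_dTheo} and the $F$-isomorphism property from Theorem \ref{bdsd:fiso}.

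First, since $K$ is Noetherian up to nilpotents, its transcendence degree $d(K)$ is finite. I fix an integer $d> d(K)$ and set $V=(\mathbb{Z}/p)^d$. By hypothesis $s_d(K)=\Hom_{\mathcal K}(K,H^*(V))$ is a Noetherian $\End(V)$-set.

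Next, I define $L:=b_d(s_d(K))=\Hom_{\End(V)}(s_d(K),H^*(V))$. By the second half of Theorem \ref{b_dTheo}, $L$ is a Noetherian unstable $\mathcal A_p$-algebra of transcendence degree at most $d$. The map $f\colon K\to L$ is the evaluation map $x\mapsto (\phi\mapsto \phi(x))$. This map is $\End(V)$-equivariant in the $\phi$ variable, because $\End(V)$ acts on $s_d(K)$ by postcomposition with $H^*(\alpha)$. It is also a morphism of unstable algebras, since the structure on $L$ is defined pointwise from the target $H^*(V)$.

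Finally, since $d(K)<d$, Theorem \ref{bdsd:fiso} says precisely that this canonical map $K\to b_d(s_d(K))$ is an $F$-isomorphism. This gives the lemma.

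The main obstacle is the choice of $d$ and checking that a single $d$ suffices. Theorem \ref{bdsd:fiso} requires the transcendence degree of $K$ to be strictly less than $d$. This is exactly where the finite transcendence degree hypothesis enters: without it, no single $d$ would work.

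A secondary point is that Proposition \ref{prop:FisoEndViso} tests $F$-isomorphisms against all elementary abelian groups $W$, not only against $V$. To be safe, I would also verify that for $\operatorname{rank} W\le d$ the set $\Hom_{\mathcal K}(-,H^*(W))$ is recovered from $s_d$. For $\operatorname{rank} W> d$, every map to $H^*(W)$ factors through a quotient of rank at most $d(K)$, because the transcendence degrees are bounded. I expect this to be already packaged inside Theorem \ref{bdsd:fiso}, so I would simply cite it.
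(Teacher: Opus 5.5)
Your proposal is correct and follows the paper's proof. In both, you take $L=b_d(s_d(K))$, get Noetherianity from Theorem \ref{b_dTheo}, and get the $F$-isomorphism $K\to b_d(s_d(K))$ from Theorem \ref{bdsd:fiso}. The only difference is that the paper sets $d=d(K)$, whereas your choice $d>d(K)$ literally matches the strict hypothesis of Theorem \ref{bdsd:fiso} as stated; the extra check in your last paragraph is therefore unnecessary.
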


\begin{proof}
    Let $d$ be the transcendence degree of $K$. Since $s_d(K)$ is a Noetherian $\End(V)$-set, we have that $b_d(s_d(K))$ is an unstable Noetherian $\mathbb{F}_p$-algebra. We take $f\colon K\to b_d(s_d(K))$, which is an $F$-isomorphism by Theorem \ref{bdsd:fiso}.
\end{proof}

Recall that by Proposition \ref{PropStep3}, $s_d(H^*(|\mathcal P|_{hK}))$ is a Noetherian $\End(V)$-set for any $d$ and moreover the transcendence degree is finite (Corollary \ref{cor:tdfinite}).

\begin{Coro}\label{Obs.Borel_fgNil}
    Let $G$ be a simple complex of groups over a finite poset, with an homomorphism $\varphi \colon \hat{G}\to K$ and an associated $K$-poset $\mathcal P$ defined as in Definition \ref{def:associatedGspace}. The mod $p$ cohomology $H^*(|\mathcal P|_{hK})$ is Noetherian up to nilpotents.
\end{Coro}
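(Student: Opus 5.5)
The claim follows by verifying the two clauses of the definition of \emph{Noetherian up to nilpotents} for $H^*(|\mathcal P|_{hK})$, both of which have essentially been established above.

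First, the transcendence degree. By Proposition \ref{prop:Fiso_Q} (with $k=\mathbb F_p$), the edge homomorphism $\rho\colon H^*(|\mathcal P|_{hK})\to \lim_{\mathcal Q} H^*(BG(q))$ is an $F$-isomorphism. An $F$-isomorphism preserves transcendence degree: nilpotent elements of the kernel do not affect algebraic independence, and $p$-th power surjectivity up to iteration lets one lift independent families. Hence $d(H^*(|\mathcal P|_{hK}))=d(\lim_{\mathcal Q}H^*(BG(q)))$. This limit embeds in the finite product $\prod_{\mathcal Q}H^*(BG(q))$, since $\mathcal Q$ is finite. Each factor is Noetherian by Theorem \ref{fgRalg}, so its transcendence degree equals its Krull dimension, which by Quillen's theorem is the maximal rank of an elementary abelian $p$-subgroup of $G(q)$. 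This is exactly Corollary \ref{cor:tdfinite}, which we invoke directly. The only care needed is to check that $d$ of a finite product is bounded by the maximum over the factors. A set of algebraically independent homogeneous elements in a product projects to some factor where it remains independent: otherwise each factor would carry a nonzero relation, and their product would be a nonzero relation in the product ring. This is the point I regard as the (mild) main obstacle, and it is handled by this product-of-relations argument.

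Second, Noetherianity of $s_d$. For every $d$, Proposition \ref{PropStep3} gives $s_d(H^*(|\mathcal P|_{hK}))\cong \colim_{\mathcal Q}\Rep(V,G(q))$ with $V=(\mathbb Z/p)^d$, and this colimit is a Noetherian $\End(V)$-set by Proposition \ref{p:colimNoetherian}. The hypotheses of that proposition are met as follows. Each $\Rep(V,G(q))$ is Noetherian by Theorem \ref{b_dTheo} applied to the Noetherian unstable algebra $H^*(BG(q))$. The structure maps induced by the injective inclusions $G(q)\hookrightarrow G(q')$ preserve kernels, because composing with a monomorphism does not change the kernel of a homomorphism $V\to G(q)$.

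Combining the two steps, $H^*(|\mathcal P|_{hK})$ has finite transcendence degree and all $s_d$ are Noetherian, so it is Noetherian up to nilpotents. As a consequence, the preceding lemma also provides an $F$-isomorphism to the Noetherian unstable algebra $b_d(s_d(H^*(|\mathcal P|_{hK})))$.
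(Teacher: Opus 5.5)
Your proposal is correct and follows the paper's route. The paper obtains the corollary directly from the definition of Noetherian up to nilpotents, combining Corollary \ref{cor:tdfinite} (finite transcendence degree) with Proposition \ref{PropStep3} ($s_d$ Noetherian for every $d$). Your extra checks fill in details that the paper's proof of Corollary \ref{cor:tdfinite} leaves implicit, and they are correct: that $F$-isomorphisms preserve transcendence degree, and that the transcendence degree of a finite product is bounded by the maximum over its factors (via the product-of-relations argument).
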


\begin{Prop}\label{general}
Let $A\in \mathcal{K}$ be an unstable Noetherian $\mathbb{F}_p$-algebra up to nilpotents. Let $B$ be an $\mathcal{A}_p$-algebra with $B$ a finitely generated $\mathbb{F}_p$-algebra, and $f\colon A\to B$ be an $F$-isomorphism. There exists $C\subseteq A$ a finitely generated sub-$\mathcal{A}_p$-algebra such that $$f\big|_{C}\colon C\to B$$ is an $F$-isomorphism.
\end{Prop}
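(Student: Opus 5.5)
We will follow the strategy of Broto--Kitchloo \cite{BK:Kac-Moody_groups} and of \cite[Proposition 5.2]{BLO:homotopy_fusion_systems}. The plan is to build $C$ from finitely many elements of $A$, chosen so that each generator of $B$ is hit by a $p$-th power, and then check the two halves of the $F$-isomorphism condition.

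\textbf{Step 1: choosing generators.} Since $B$ is a finitely generated $\mathbb{F}_p$-algebra, fix homogeneous generators $b_1,\dots,b_r$ of $B$. Because $f$ is an $F$-isomorphism, it is surjective up to $p$-th powers. Hence for each $i$ there are $n_i\geq 0$ and a homogeneous $a_i\in A$ with $f(a_i)=b_i^{p^{n_i}}$. Let $C\subseteq A$ be the sub-$\mathcal{A}_p$-algebra generated by $a_1,\dots,a_r$.

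\textbf{Step 2: finite generation of $C$.} A priori $C$ is only generated as an $\mathcal{A}_p$-algebra, not as an $\mathbb{F}_p$-algebra. Our first attempt is to replace $C$ by $C':=f^{-1}(B')\cap C$, where $B'\subseteq B$ is the $\mathbb{F}_p$-subalgebra generated by the elements $f(a_i)$. Then $B$ is a finite $B'$-module, since it is generated by monomials in the $b_i$ of bounded exponents. If this does not work, the fallback is to use Noetherianity of $b_d(s_d(A))$ (the Lemma preceding Corollary \ref{Obs.Borel_fgNil}). We would then argue that the sub-$\mathcal{A}_p$-algebra generated by finitely many elements of an unstable algebra of finite transcendence degree is finitely generated modulo its nilradical. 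Proposition \ref{sq0} makes this work: the operations $Sq_0$/$\mathcal{P}_0$ act as Frobenius, so high Steenrod operations on a generator are $p$-th powers of lower ones. This is the main obstacle. Concretely, we must show that the $\mathcal{A}_p$-orbit of each $a_i$ is integral over the polynomial subalgebra generated by finitely many elements. We would try to prove this by noting that every $\mathcal{A}_p$-image of $a_i$ maps under $f$ into $B$, which is a finite module over $\mathbb{F}_p[f(a_1),\dots,f(a_r)]$. Integrality then lifts to $A$ modulo nilpotents, using the $F$-injectivity of $f$. After this, we enlarge $C$ by finitely many elements so that it is a finitely generated $\mathbb{F}_p$-algebra that is still closed under $\mathcal{A}_p$, using the Cartan formula to control the operations on products.

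\textbf{Step 3: the $F$-isomorphism.} $F$-injectivity of $f|_C$ is automatic, since $\ker(f|_C)\subseteq\ker f$ consists of nilpotent elements. For $F$-surjectivity, let $b\in B$ and write $b$ as a polynomial in the $b_i$. Then $b^{p^N}$, for $N\geq\max n_i$, is the same polynomial, with $p^N$-th powers of its coefficients, in the elements $b_i^{p^N}=f(a_i^{p^{N-n_i}})$. Hence $b^{p^N}\in f(C)$.

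As an alternative check, Proposition \ref{prop:FisoEndViso} can be used. Here $C\to B$ is an $F$-isomorphism if and only if $\Hom_\mathcal{K}(B,H^*(V))\to\Hom_\mathcal{K}(C,H^*(V))$ is bijective for all $V$. Injectivity holds because the images of $C$ generate $B$ up to $p$-th powers, and $H^*(V)$ is reduced. For surjectivity we would combine this with the bijection for $A$ and the Noetherianity of $s_d(A)$.
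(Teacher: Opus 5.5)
\textbf{Verdict: genuine gap.} Step 2 does not go through, and it is where the whole content of the statement lies.

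\textbf{Why Step 2 fails.} Your $C$, the sub-$\mathcal{A}_p$-algebra generated by arbitrary preimages $a_i$, can contain infinitely many indecomposable nilpotent elements. So it need not lie inside any finitely generated sub-$\mathcal{A}_p$-algebra of $A$.
\begin{itemize}
\item No ``enlargement'' can repair this.
\item Your first attempt $f^{-1}(B')\cap C$ fails for the same reason, since it still contains all of $\ker f\cap C$.
\item Integrality of the orbit over $\mathbb{F}_p[a_1,\dots,a_r]$, which $F$-injectivity does give you, does not imply finite generation. Infinitely many integral (even square-zero) elements can generate an infinitely generated algebra.
\end{itemize}

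\textbf{A counterexample.} Let $A$ be the fibre product over $\mathbb{F}_2$ (via augmentations) of $\mathbb{F}_2[x]$ and $\mathbb{F}_2\oplus\Sigma\widetilde H^*(B\mathbb{Z}/2)$, where the second factor has square-zero multiplication. Then $x\cdot \Sigma\widetilde H^*(B\mathbb{Z}/2)=0$ in $A$.
\begin{itemize}
\item The projection $f\colon A\to B=\mathbb{F}_2[x]$ is surjective with square-zero kernel, so it is an $F$-isomorphism.
\item $A$ is Noetherian up to nilpotents.
\item Your Step 1 allows $a_1=x^2+\sigma x$, since $f(a_1)=x^2$.
\item But $Sq^1a_1=\sigma x^2$, and $Sq^{2^j}\cdots Sq^2Sq^1a_1=\sigma x^{2^{j+1}}$ for all $j$.
\item Every product of positive-degree elements of $A$ lies in $\mathbb{F}_2[x]$. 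Hence any subalgebra containing all the $\sigma x^{2^{j}}$ has infinitely many indecomposables.
\end{itemize}

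\textbf{The missing idea.} Pass to Frobenius powers instead of enlarging; this is what the paper does.
\begin{enumerate}
\item Choose a common exponent, $f(a_i)=b_i^{p^k}$.
\item Then $f(\mathcal{P}^m a_i)=\mathcal{P}^m(b_i^{p^k})$ is either $0$ or $(\mathcal{P}^{m/p^k}b_i)^{p^k}$ (use $Sq^m$ in place of $\mathcal{P}^m$ when $p=2$). Either way it lies in $f(\mathbb{F}_p[a_1,\dots,a_r])$.
\item So $\mathcal{P}^m a_i-w_{i,m}$ is nilpotent for some $w_{i,m}\in\mathbb{F}_p[a_1,\dots,a_r]$.
\item By instability, only finitely many pairs $(i,m)$ have $\mathcal{P}^m a_i\neq 0$. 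Hence a single $s\geq 1$ gives $(\mathcal{P}^m a_i-w_{i,m})^{p^s}=0$ for all of them.
\item By Proposition \ref{sq0} and the Cartan formula, $\mathcal{P}^j(a_i^{p^s})$ is either $0$ or $(\mathcal{P}^{j/p^s}a_i)^{p^s}=w_{i,j/p^s}^{p^s}$. Also $\beta(a_i^{p^s})=0$.
\item Therefore $C=\mathbb{F}_p[a_1^{p^s},\dots,a_r^{p^s}]$ is already a finitely generated sub-$\mathcal{A}_p$-algebra. In the example, $a_1^2=x^4$.
\end{enumerate}

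Your Step 3 then applies verbatim to this $C$. This argument never uses that $A$ is Noetherian up to nilpotents, so the fallback through $b_d(s_d(A))$ is unnecessary.
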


\begin{proof}
Since $B$ is a finitely generated algebra, we can choose a finite set of generators $\{x_i\}$ with $1\leq i \leq r$. Since $f$ an $F$-epimorphism, for each $i=1,\dots,r$, there exists $y_i\in A$ such that $$f(y_i)=x_i^{p^k}$$ form some $k$. Note that $k$ may depend on each $x_i$, but as there are finitely many such $x_i$, we can choose the same $k$ for all of them taking powers of the corresponding $y_i$.

Assume $p=2$. Let $$C' = \langle y_i \mid i=1, \dots, r \rangle \leq A$$ generated by $y_i$ as a sub-algebra. It satisfies $$f \left( Sq^i y_i \right) = Sq^i \left( f(y_i) \right) = Sq^i \left( x_i^{2^k} \right)\in L= \langle x_i^{2^k} \mid i=1, \dots, r \rangle $$ which is a sub-$\mathcal{A}_2$-algebra. Analogously, if $p>2$, then $$f \left( \mathcal{P}^i y_i \right) = \mathcal{P}^i \left( f(y_i) \right) = \mathcal{P}^i \left( x_i^{p^k} \right)\in L= \langle x_i^{p^k} \mid i=1, \dots, r \rangle $$ which is a sub-$\mathcal{A}_p$-algebra. In any case, we had $L\subseteq Im(f)$.
    
To follow the argument, we can assume that $p=2$ and then we will see that the case $p>2$ is analogous. Then, for a large enough $k$, there exists $w \in C'$ such that $$f(w) = Sq^i \left( x_i^{2^k} \right) = f(Sq^i y_i).$$

By the Cartan formula, $Sq^i(x^2)=\sum_{k+l=i} Sq^k x\cdot Sq^l x$, and commutativity, for $i$ odd, we will have that $Sq^i(x^2)=0$, and for $i$ even, we will have that $Sq^i(x^2)=(Sq^{i/2}(x))^2$. Note that this is the situation in Proposition \ref{sq0}.

Then, we can see how $$Sq^{i\cdot 2^s} (y_i^{2^s})=(Sq^{i} y_i)^{2^s}=w^{2^s}\in \langle y_i^{2^s}\rangle.$$

Similarly, if $p>2$, we have that for a large enough $k$, there exists $w \in C'$ such that $$f(w) = \mathcal{P}^i \left( x_i^{p^k} \right) = f(\mathcal{P}^i y_i).$$
    
Then, $$w - \mathcal{P}^i y_i \in \ker f$$ (which is an $F$-monomorphism), and consequently $$(w - \mathcal{P}^i y_i)^{p^s}=0.$$ Therefore, as we are working in a $\mathbb{F}_p$-algebra, $w^{p^s}=(\mathcal{P}^i y_i)^{p^s}$.

Again, we are in the situation of Proposition \ref{sq0}, and we have $\mathcal P^i\mathcal P_0(x)=\mathcal P_0 \mathcal P^{i/p}(x)$ if $i$ is a multiple of $p$. 
Then, we can see how $$\mathcal{P}^{i\cdot p^s} (y_i^{p^s})=(\mathcal{P}^i y_i)^{p^s}=w^{p^s}\in \langle y_i^{p^s}\rangle.$$
    
Now, we can define $$C = \langle y_1^{p^k},\dots , y_r^{p^k} \rangle \leq A$$ as a finitely generated sub-$\mathcal{A}_p$-algebra, and we can see that the restriction $f \big|_{C}\colon C\to B$ is an $F$-isomorphism.

At first, we will prove that it is a $F$-monomorphism. Note that as $C\leq A$, for every $c\in C$ such that $c\in \ker f \big|_{C}$, in particular $c\in A$ and it's also true that $c\in \ker f$. So, as $f$ is a $F$-monomorphism, there exists $t\in \mathbb{N}$ such that $c^t=0$, and thus, $f \big|_{C}\colon C\to B$ is an $F$-monomorphism.

Moreover, we would like to affirm that for every $x_i\in B$, there exists a $y_i^{p^r}\in C$ such that $$f(y_i^{p^r})=x_i^{p^k}$$ for some $p^k$. For now, as we have that $f\colon A\to B$ is a $F$-epimorphism, we know that for every $x_i\in B$, there exists a $y_i\in A$ such that $$f(y_i)=x_i^{p^s}$$ for some $p^s$. But then, there exists a $y_i^{p^r}\in C$ such that $$f(y_i^{p^r})=f(y_i)^{p^r}=(x_i^{p^s})^{p^r}=x_i^{p^k}.$$ Thus, we get that $f \big|_{C}\colon C\to B$ is an $F$-epimorphism.

 \end{proof}

Finally, the following proposition corresponds to Step \ref{point} of the proof of Theorem \ref{main}.

\begin{Prop}\label{prop:finiteCmodule}
    Let $G$ be a finite complex of groups over a finite poset $\mathcal{Q}$. There is a finitely generated sub-$\mathcal{A}_p$-algebra $C\subseteq H^*(|\mathcal P|_{hK})$ such that for every $q\in\mathcal{Q}$, $H^*(BG(q))$ is a finitely generated $C$-module via the restriction $C\to H^*(BG(q))$.
\end{Prop}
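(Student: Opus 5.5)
We apply Proposition \ref{general} with $A=H^*(|\mathcal P|_{hK})$ and $B=\lim_{\mathcal Q} H^*(BG(q))$, then use a standard integrality argument to pass from an $F$-isomorphism to finiteness of each $H^*(BG(q))$ over $C$.

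\textbf{Checking the hypotheses of Proposition \ref{general}.} By Corollary \ref{Obs.Borel_fgNil}, $A$ is Noetherian up to nilpotents. By Proposition \ref{prop:Fiso_Q}, the edge map $\rho\colon A\to B$ is an $F$-isomorphism of unstable $\mathcal A_p$-algebras; it is induced by the restrictions $BG(q)\to |\mathcal P|_{hK}$, which are maps of spaces.

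It remains to see that $B$ is a finitely generated $\mathbb F_p$-algebra. Since $\mathcal Q$ is finite, $B$ is a subalgebra of the finite product $P=\prod_{q}H^*(BG(q))$. Each factor is Noetherian by Theorem \ref{fgRalg}, so $P$ is a finitely generated algebra. To conclude that $B$ is finitely generated, choose a finitely generated polynomial subalgebra $D$ such that $P$ is a finite $D$-module and $D\subseteq B$. One way to obtain $D$ is via Chern classes of a faithful representation of $K$ when available. Failing that, use the Evens norm or Quillen's theorem: for each $q$ and each element, some $p$-power of it lifts to $B$ up to nilpotents, which is precisely $F$-surjectivity of $B\to H^*(BG(q))$ for finite $\mathcal Q$. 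Then $B$ is a $D$-submodule of a finite module over the Noetherian ring $D$, hence finite over $D$, hence a finitely generated algebra.

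Proposition \ref{general} then yields a finitely generated sub-$\mathcal A_p$-algebra $C\subseteq A$ such that $C\to B$ is an $F$-isomorphism.

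\textbf{Finiteness of each $H^*(BG(q))$ over $C$.} Fix $q$ and consider the composite $C\to B\to H^*(BG(q))$.

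First show that $B\to H^*(BG(q))$ is integral. The key is that every element $x\in H^*(BG(q))$ has a $p$-power coming from $B$. This should follow from Proposition \ref{PropStep3} together with Proposition \ref{prop:FisoEndViso}. The map $\Rep(V,G(q))\to \colim_{\mathcal Q}\Rep(V,G(\cdot))$ preserves kernels by Proposition \ref{p:colimNoetherian}. By the Henn--Lannes--Schwartz characterization of finite morphisms, a map of Noetherian unstable algebras that induces kernel-preserving maps on all $s_d$ is finite. If invoking that characterization is too heavy, argue directly: each $x$ satisfies $x^{p^n}\in\operatorname{im}(C)$ up to nilpotents, so $x$ is integral over $C$.

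Integrality combined with finite generation of $H^*(BG(q))$ as an algebra gives finiteness as a module: if $z_1,\dots,z_m$ generate $H^*(BG(q))$ as an algebra and each $z_j$ is integral over $C$, then $H^*(BG(q))$ is a finite $C$-module.

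\textbf{Main obstacle.} The hardest step is proving that each element of $H^*(BG(q))$ has a $p$-power in the image of $B$, up to nilpotents; that is, that each individual restriction $B\to H^*(BG(q))$ is an $F$-epimorphism rather than merely that the limit behaves well. My first attempt would be a transfer/norm argument, lifting $x^{p^n}$ compatibly along the finite poset one object at a time using Noetherian induction. If that fails, I would fall back on the kernel-preserving colimit of $\End(V)$-sets together with the Henn--Lannes--Schwartz finiteness criterion.
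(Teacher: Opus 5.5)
There is a genuine gap. The claim your argument rests on is that each restriction $B\to H^*(BG(q))$, and hence $C\to H^*(BG(q))$, is an $F$-epimorphism. This is false in general.

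Take $p=3$, $\mathcal Q=\{a<b\}$ and $G(a)=\mathbb Z/3\subset G(b)=\Sigma_3=K$. Since $b$ is terminal, $|\mathcal P|_{hK}\simeq B\Sigma_3$. So $B=H^*(B\Sigma_3)$, the ring of invariants of $\Lambda(x)\otimes\mathbb F_3[y]$ under $x\mapsto -x$, $y\mapsto -y$, namely $\Lambda(xy)\otimes\mathbb F_3[y^2]$. No power $y^{3^n}$ lies in it, even modulo nilpotents. Yet $H^*(B\mathbb Z/3)$ is free of rank $4$ over $\mathbb F_3[y^2]$.

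Finiteness is governed by preservation of kernels, not by $F$-surjectivity. The map $\Rep(\mathbb Z/3,\mathbb Z/3)\to\Rep(\mathbb Z/3,\Sigma_3)$ identifies the two nontrivial homomorphisms; this is exactly the fusion that destroys $F$-surjectivity. But it still preserves kernels. So the transfer/norm plan in your last paragraph cannot succeed.

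The Henn--Lannes--Schwartz criterion you mention is the right tool, but it must be used to conclude finiteness directly, not to support a $p$-power lifting claim. For Noetherian $C$, the map $C\to H^*(BG(q))$ is finite once $\Hom_{\mathcal K}(H^*BG(q),H^*V)\to\Hom_{\mathcal K}(C,H^*V)$ preserves kernels. Via Lannes' theorem and the $F$-isomorphism $C\subseteq A$, this map is $\Rep(V,G(q))\to\colim_{\mathcal Q}\Rep(V,G(\cdot))$, which preserves kernels by Proposition \ref{p:colimNoetherian}. That is the second half of the paper's proof.

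The more serious consequence is in your first step. To apply Proposition \ref{general} you need $B=\lim_{\mathcal Q}H^*(BG(q))$ to be finitely generated. Your justification is either a faithful representation of $K$, which does not exist for general $K$, or the same false $F$-surjectivity. Finite generation of $B$ does hold a posteriori, since $B$ is a $C$-submodule of the finite $C$-module $\prod_q H^*(BG(q))$, but using that is circular. Without a finitely generated $F$-isomorphic target you never obtain $C$ at all.

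The missing idea is the paper's choice of target, $b_d(s_d(A))$, where $d=d(A)$ is finite by Corollary \ref{cor:tdfinite}:
\begin{itemize}
\item it is a Noetherian unstable algebra by Theorem \ref{b_dTheo}, because $s_d(A)\cong\colim_{\mathcal Q}\Rep(V,G(q))$ is a Noetherian $\End(V)$-set by Proposition \ref{PropStep3};
\item $A\to b_d(s_d(A))$ is an $F$-isomorphism by Theorem \ref{bdsd:fiso}.
\end{itemize}
Applying Proposition \ref{general} to this map produces $C$. Then $C\subseteq A$ is an $F$-isomorphism by two-out-of-three, and the kernel-preservation argument above finishes the proof.
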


\begin{proof}
Now, as we argued in Remark \ref{Obs.Borel_fgNil}, $H^*(|\mathcal P|_{hK})$ is a finitely generated $\mathbb{F}_p$-algebra up to nilpotent elements. Also, $b_d(s_d(H^*(|\mathcal P|_{hK})))$ is a finitely generated $\mathcal{A}_p$-algebra. And finally, by Theorem \ref{bdsd:fiso}, $$H^*(|\mathcal P|_{hK})\to b_d(s_d(H^*(|\mathcal P|_{hK})))$$ is an $F$-isomorphism. Hence, the hypotheses of Proposition \ref{general} are satisfied, so we can apply it to get $$C\subseteq H^* (|\mathcal P|_{hK})\to b_d(s_d( H^* |\mathcal P|_{hK})),$$ where $d$ is the transcendence degree of $H^* (|\mathcal P|_{hK})$.  We  have $C\subseteq H^* (|\mathcal P|_{hK})\xrightarrow[]{f} H^*(BG(q)),$ then $H^*(BG(q))$ is a $C$-module through the map $f$. The goal is to show that $H^*(BG(q))$ is a finite $C$-module.

From \cite{HLSmodulonilpotents}, it is enough to show that the induced map of $\textrm{End}(V)$-sets, $$\mathrm{Hom}_{\mathcal{K}}(H^*BG(q),H^*V)\to \mathrm{Hom}(C,H^*V)$$ preserves kernels where $V$ is an elementary abelian $p$-group of rank $d$.

Now, by Proposition \ref{prop:FisoEndViso}, the inclusion $C\subseteq H^* (|\mathcal P|_{hK})$ is an $F$-isomorphism, and then we have $\mathrm{Hom}_{\mathcal{K}}(H^* (|\mathcal P|_{hK}), H^* V) \to \mathrm{Hom}_{\mathcal{K}}(C, H^* V)$ is an isomorphism. Then, to prove that $\mathrm{Hom}_{\mathcal{K}}(H^*BG(q),H^*V)\to \mathrm{Hom}(C,H^*V)$ preserves kernels, we only need to prove that $\mathrm{Hom}_{\mathcal{K}}(H^*BG(q),H^*V)\xrightarrow{f^*} \mathrm{Hom}_{\mathcal{K}}(H^* (|\mathcal P|_{hK}), H^* V)$ preserves kernels.

By Lannes' theory, we have the following description 
\begin{equation}\label{primerISO}
\mathrm{Hom}_{\mathcal{K}}(H^*BG(q),H^*V)\cong \mathrm{Rep}(V,G(q)).
\end{equation}

Moreover, $H^*(|\mathcal P|_{hK})\to \lim_\mathcal{Q} H^*BG(q)$ is an $F$-isomorphism (see Proposition \ref{prop:Fiso_Q}). Then finally, we have an isomorphism of $\textrm{End}(V)$-sets
\begin{equation*}
\mathrm{Hom}_{\mathcal{K}}(C,H^*V)\cong\mathrm{Hom}_{\mathcal{K}}(H^*(|\mathcal P|_{hK}),H^*V)\cong\mathrm{Hom}_{\mathcal{K}}(\lim_\mathcal{Q} H^*BG(q),H^*V).
\end{equation*}
And $\mathrm{Hom}_{\mathcal{K}}(\lim_\mathcal{Q} H^*BG(q),H^*V)\cong\colim \mathrm{Rep}(V,G(q))$, so we get
\begin{equation}\label{segonISO}
    \mathrm{Hom}_{\mathcal{K}}(H^*(|\mathcal P|_{hK}),H^*V)\cong \colim \mathrm{Rep}(V,G(q)).
\end{equation}

Then, by Proposition \ref{p:colimNoetherian}, we have that $\mathrm{Rep}(V,G(q))\to \colim \mathrm{Rep}(V,G(q))$ preserves kernels. Therefore, using the isomorphisms \ref{primerISO} and \ref{segonISO}, we conclude that
$$\mathrm{Hom}_{\mathcal{K}}(H^*BG(q),H^*V)\xrightarrow{f^*} \mathrm{Hom}_{\mathcal{K}}(H^* (|\mathcal P|_{hK}), H^* V)$$ preserves kernels.
\end{proof}

Now, we have all the ingredients for the proof of the main theorem.

\begin{proof}[Proof of Theorem \ref{main}]
    With Proposition \ref{prop:finiteCmodule} we obtain that the spectral sequence  \begin{equation}
    \lim_{q\in \mathcal{Q}}{ }^i H^j(BG(q))\Rightarrow H^{i+j}(|\mathcal P|_{hK}).
\end{equation} is a spectral sequence of $C$-modules. Moreover, we show that it is a spectral sequence of finite $C$-modules. For every $q\in\mathcal{Q}$, we proved that $H^*BG(q)$ is a finitely generated $C$-module in Proposition \ref{prop:finiteCmodule}.

The reason is that the higher limits $\lim_\mathcal{Q}^i H^kBG(q)$ are computed as the homolo\-gy of the complex
\begin{align*}
\prod_{q_i} H^*BG(q_i) \leftarrow \prod_{q_i\to q_j} H^*BG(q_j) \leftarrow \prod_{q_i\to q_j \to q_k} H^*BG(q_k) \leftarrow \cdots
\end{align*}
Also, every element in this sequence is a finitely generated $C$-module because $\mathcal{Q}$ is a finite poset. Moreover, the differentials are morphisms of $C$-modules. We proved we have compatible morphisms,
\[
C \subseteq H^*(|\mathcal{P}|_{hG}) \xrightarrow{\alpha_{q}} H^*BG(q)
\]
\[
C \subseteq H^*(|\mathcal{P}|_{hG}) \xrightarrow{\alpha_{q'}} H^*BG(q')
\]
Then if we have $\varphi^*\colon H^*BG(q) \to H^*BG(q')$ needs to 
make the diagram commute, so they are morphisms of  $C$-modules.

Then, the complex differentials are linear combinations of maps of type $\varphi^*$. Thus, they are also morphisms of  $C$-modules. And that means that $H^*BG(q)$ is a finitely generated $C$-module.

Moreover, the $E_2$-term is of finite $C$-modules. Since $C$ is Noetherian, $E_\infty$ is a also a finite $C$-module. Finally $H^*(|\mathcal P|_{hK})$ is a finite $C$-module. But since $C$ is a finitely generated $\mathbb{F}_p$-algebra, then $H^*(|\mathcal P|_{hK})$ is also a finitely generated $\mathbb{F}_p$-algebra. 
\end{proof}

\begin{Obs}
    The work of Rector \cite{Rector} applies in this case, where he studied the Noetherian unstable $\mathcal{A}_p$ algebras $K$, showing how they are described up to F-isomorphism by a finite category generalizing Quillen's category of elementary abelian $p$-subgroups. Moreover the spectrum of homogeneous prime ideals admit an stratification parametrized by such a category, in a similar way as in Quillen's strong stratification theorem.

    In that case, the Krull dimension is the maximum of the ranks of elementary abelian $p$-subgroups $V$ such that there is morphism of unstable algebras $f\in \Hom_\mathcal{K}(H^*(|\mathcal P|_{hK}),H^*(V))$ which exhibits $H^*(V)$ as a finite $H^*(|\mathcal P|_{hK})$-module.
\end{Obs}

\subsection{Examples of non-finite posets $\mathcal{Q}$: finiteness properties}

One of the key properties we used, is the fact that the colimit over a finite poset of Noetherian $\textrm{End}(V)$-sets is also a Noetherian $\textrm{End}(V)$-set (Proposition \ref{p:colimNoetherian}). We consider the case in which the poset is not necessarily finite, but the colimit still is.

We focus on a particular example of a colimit of finite groups over the poset of natural numbers. Thus, let $X\colon\mathbb{N}\to \textrm{End}(V)\text{-sets}$ be a functor such that $X(i)$ is a Noetherian $\End(V)$-set for every $i\in \mathbb{N}$. We ask ourselves in which cases we can affirm $\colim_{i\in\mathbb{N}} X(i)$ 
finite. 
\begin{Ex}
    Let $\mathbb{Z}/p^{\infty} \cong \mathbb{Z}[1/p]/\mathbb{Z}$ denote the union of the cyclic $p$-groups $\mathbb{Z}/p^n$ under the obvious inclusions.
Let $X\colon\mathbb{N}\to \textrm{End}(V)\text{-sets}$ defined be $X(i)=\Rep(V,G(i))$ where the corresponding simple complex of groups is $G\colon \mathbb{N} \to \textrm{InjGroups}$ is $G(i)=\mathbb{Z}/p^i$. Then $X(i)=\Rep(V,\mathbb{Z}/p^i)=Hom(V,\mathbb{Z}/p^i)$. Note that in this case $\hat{G}=\mathbb{Z}/p^\infty$. In this case, one can see that $\colim_{\mathbb{N}}X$ is finite since any morphism from $V$ will have image in the elements of order $p$ in $G(i)$, and they are all identify with the image of $G(1)$. Then, in fact,  $\colim_{\mathbb{N}}X=\Rep(V,\mathbb{Z}/p^\infty)$.
\end{Ex}

\begin{Ex}
Another example is given by considering the simple complex of groups over $\mathbb{N}$ given by $G(i)=(\mathbb Z/p)^i$ with first coordinate inclusions. The group colimit $\hat{G}$ is an infinite product of $\mathbb{Z}/p$. Like in the previous example consider for each $i$ $X(i)=\Rep(V,(\mathbb{Z}/p)^i)=Hom(V,(\mathbb{Z}/p)^i)$, which is also finite. One can see that we have proper inclusions $X(i)\subsetneq X(i+1)$:  for every $n\in\mathbb{N}$ there is $g\in \Hom(V,G(n))$ which does not factor through any morphism in $\Hom(V,G(n-k))$ for any $k>0$.  Then $\colim_{\mathbb{N}} X$ is not finite. But note also that $\Rep(V,\colim_{\mathbb N} G)$ is not finite.

\end{Ex}

Both previous examples correspond to abelian simple complexes of groups where representations and morphisms agree.
 
\begin{Ex}\label{ex:quaternion}
    
Let $G\colon \mathbb{N} \to \textrm{InjGroups}$ defined as $G(i)=Q_{2^i},$ the generalized quaternion group. Recall that we have group presentations:
$$\langle x, y \mid x^{2^{i}} = y^{4} = 1,\; x^{2^{i-1}} = y^{2},\; y^{-1}xy = x^{-1} \rangle .$$ In particular all the elements in $Q_{2^i}$ have order $2^k$ with $k\leq i$. For each $Q_{2^i}$ the only order $2$ elements are $1$ and $-1$, which are central in this group. Any morphism from an elementary abelian $2$-group will have image in the center. 
As before we define $X(i)=\Rep(V,Q_{2^i}),$ with $i\in \mathbb{N}$ and $i\geq 3$.  
As before we define $X(i)=\Rep(V,Q_{2^i}),$ with $i\in \mathbb{N}$ and $i\geq 3$.  In this case $\hat{G}=Q_{2^\infty}$, the infinite quaternion group which has only one conjugacy class of elementary abelian $2$-groups.

\end{Ex}

Another example of a non abelian simple complex of groups analogous to Example \ref{ex:quaternion} is the one described by the family of dihedral groups. However, both examples correspond to a family of locally finite groups. We first need to introduce some definitions, and use the paper \cite{broto2007discrete} as a reference.

\begin{Def}
A \emph{discrete toral group} is a group $P$, with a normal subgroup $P_0 \lhd P$, such that $P_0$ is isomorphic to a finite product of copies of $\mathbb{Z}/p_i^{\infty}$, thus:
$$P_0\cong \prod_{i=1}^{n} \mathbb{Z}/p_i^{\infty},$$
where each \(p_i\) is a prime (not necessarily equal to the others).
And $P/P_0$ is a finite group.
\end{Def}

\begin{Def}
A group $G$ is \emph{locally finite} if every finitely generated subgroup of $G$ is finite, and it is a \emph{locally finite $p$-group} if every finitely generated subgroup of $G$ is a finite $p$-group.
\end{Def}

\begin{Def}
A group $G$ is \emph{artinian} if every non-empty set of subgroups of $G$, partially ordered by inclusion, has a minimal element. Equivalently, $G$ is artinian if its subgroups satisfy the descending chain condition.
\end{Def}

\begin{Obs}\label{obs:countable}
  Let $G$ be a locally finite and artinian group. Then the same strategy of proof as in the proof of \cite[Proposition 1.2]{broto2007discrete} applies, and $G$ fits into an extension 
    \begin{equation}
        1\to A \to G \to \pi \to 1
    \end{equation}
    where $\pi$ is a finite group and $A$ is a finite product of groups of the form $\mathbb{Z}/q^n$ with $q$ a prime number and $1\leq n\leq \infty$. In that case we see that locally finite artinian groups are also countable and can be expressed as $G=\cup_{n\in \mathbb{N}} G(n)$ with $G(n)$ finite groups. 
  
\end{Obs}
    
\begin{Prop}\cite[See Prop 1.2]{broto2007discrete}
A group is a discrete toral group if and only if it is artinian and
a locally finite group.
\end{Prop}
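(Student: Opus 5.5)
The plan is to prove the two implications separately, relying on the structure theory of discrete toral groups and on the extension in Remark \ref{obs:countable}.

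For the forward direction, let $P$ be discrete toral with $P_0\cong\prod_{i=1}^n\mathbb{Z}/p_i^\infty$ normal and $P/P_0$ finite.

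\emph{Local finiteness.} Take a finitely generated $H\le P$.
\begin{enumerate}
\item $H\cap P_0$ has finite index in $H$, so it is finitely generated.
\item $P_0$ is abelian and torsion, so a finitely generated subgroup of it is finite. Hence $H\cap P_0$ is finite.
\item $H/(H\cap P_0)$ embeds in the finite group $P/P_0$, so $H$ is finite.
\end{enumerate}

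\emph{Artinian property.}
\begin{enumerate}
\item Each $\mathbb{Z}/p^\infty$ is artinian, since its proper subgroups are the finite cyclic groups $\mathbb{Z}/p^k$, which form a chain.
\item Being artinian is closed under finite direct products and, more generally, under extensions. Given a descending chain $H_1\supseteq H_2\supseteq\cdots$ in an extension $1\to A\to G\to \pi\to 1$, both $H_j\cap A$ and the images of $H_j$ in $\pi$ stabilize. Then $H_j/(H_j\cap A)$ stabilizes as well, and since $H_{j+1}\subseteq H_j$ with equal intersection with $A$ and equal image in $\pi$, we get $H_{j+1}=H_j$.
\item Applying this to $1\to P_0\to P\to P/P_0\to 1$ shows $P$ is artinian.
\end{enumerate}

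For the converse, let $G$ be locally finite and artinian. By Remark \ref{obs:countable} there is an extension $1\to A\to G\to\pi\to 1$ with $\pi$ finite and $A$ a finite product of groups $\mathbb{Z}/q^n$, $1\le n\le\infty$. Split $A=A_\infty\times A_f$, where $A_\infty$ is the product of the factors with $n=\infty$ and $A_f$ is the finite product of the rest. Then $A_\infty\cong\prod\mathbb{Z}/p_i^\infty$.

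It remains to see that $A_\infty$ is normal in $G$ with finite quotient. For normality, characterize $A_\infty$ intrinsically as the maximal divisible subgroup of $A$. This subgroup is characteristic in $A$, and $A$ is normal in $G$, so $A_\infty$ is normal in $G$. For the quotient, $G/A_\infty$ is an extension of the finite group $A_f$ by the finite group $\pi$, hence finite. So $G$ is discrete toral with $P_0=A_\infty$.

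The main obstacle is producing the extension in Remark \ref{obs:countable} itself, which the paper only sketches by citing \cite[Proposition 1.2]{broto2007discrete}. If I had to supply it, I would follow Broto--Levi--Oliver:
\begin{enumerate}
\item Use the artinian property to take a minimal finite-index subgroup $G_0$. It is normal, being an intersection of finitely many conjugates.
\item Show $G_0$ has no proper finite-index subgroups, hence is divisible.
\item Invoke the theorem (Černikov/Kegel--Wehrfritz) that locally finite artinian groups are Černikov groups. This yields $G_0$ abelian, and then the structure theory of divisible abelian artinian torsion groups gives $G_0\cong\prod\mathbb{Z}/p_i^\infty$ with finitely many factors.
\end{enumerate}
I expect the abelianness of $G_0$ in step 3 to be the genuinely hard step, and I would take it from the literature rather than reprove it.
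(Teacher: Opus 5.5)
Your proposal is correct and follows the same route as the paper, which gives no argument of its own but defers to \cite[Prop.~1.2]{broto2007discrete}: the forward direction is routine (closure of local finiteness and of the artinian property under extensions, as you do), and the converse rests on the theorem that locally finite artinian groups are \v{C}ernikov groups (\v{S}unkov, Kegel--Wehrfritz), which enters the paper through Remark~\ref{obs:countable} just as you use it. In your optional sketch, ``no proper finite-index subgroups, hence divisible'' is not valid before $G_0$ is known to be abelian (the finitary alternating group on $\mathbb{N}$ has no proper finite-index subgroups but is not divisible), so that step belongs after step~3; in any case, once that theorem is cited the converse is immediate, since a \v{C}ernikov group is by definition discrete toral in the paper's sense.
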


The important property of this family of groups that we will use is the following, whose proof is the same as in \cite[Lemma 1.4]{broto2007discrete}.

\begin{Prop}\label{toralconjclass}
    Let $G$ be a discrete toral group, then $G$ contains finitely many conjugacy classes of elementary abelian subgroups.
\end{Prop}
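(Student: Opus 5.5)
We follow the strategy of \cite[Lemma 1.4]{broto2007discrete}. First reduce to $p$-subgroups: an elementary abelian subgroup $E\le G$ is an elementary abelian $q$-group for some prime $q$, since "elementary abelian subgroup" means a $(\mathbb Z/q)^r$ for some $q$. Only finitely many primes $q$ are relevant. Indeed, writing $1\to P_0\to G\to \pi\to 1$ with $P_0\cong\prod_{i=1}^n\mathbb Z/p_i^\infty$ and $\pi$ finite, any element of prime order $q$ either lies in $P_0$, so $q\in\{p_1,\dots,p_n\}$, or maps to an element of order $q$ in $\pi$, so $q$ divides $|\pi|$. It therefore suffices to fix one prime $q$ and show that there are finitely many conjugacy classes of elementary abelian $q$-subgroups.

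Fix $q$ and let $E\le G$ be an elementary abelian $q$-group. Let $T=\Omega_1(P_0)_{(q)}$ be the subgroup of elements of order dividing $q$ in $P_0$. It is finite, being a product of copies of $\mathbb Z/q$, one for each $i$ with $p_i=q$, and it is characteristic in $P_0$, hence normal in $G$. The image $\bar E$ of $E$ in $\pi$ is one of finitely many subgroups of $\pi$. Fix such a $\bar E$ and let $H\le G$ be its preimage; then $E\le H$ and $E\cap P_0\le T$. The key step is to show that the subgroups $E\le H$ with a prescribed image $\bar E$ and a prescribed intersection $E\cap P_0=E_0\le T$ fall into finitely many $P_0$-conjugacy classes. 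Such $E$ correspond to complements of $P_0/E_0$-type sections. More concretely, choose a set-theoretic section $s\colon\bar E\to H$. Then $E$ is determined modulo $E_0$ by a function $\bar E\to P_0$, and the conditions that $E$ be a subgroup of exponent $q$ containing $E_0$ turn this into a cocycle-type condition. Conjugation by $P_0$ changes the function by a coboundary.

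Hence the $P_0$-conjugacy classes are parametrized by a set of the form $H^1(\bar E;P_0/E_0)$, twisted by the finitely many choices of extension data. This cohomology group is finite because $\bar E$ is a finite $q$-group and $P_0/E_0$ is a discrete torus modulo a finite group, which is again of the form $\prod\mathbb Z/p_i^\infty\times(\text{finite})$. Its cohomology in positive degrees is killed by $|\bar E|$, and the $|\bar E|$-torsion of such a group is finite, as each $\mathbb Z/p^\infty$ has finite $m$-torsion for every $m$. Since the cochain groups are not finite, we plan to argue finiteness directly: $H^1(\bar E;M)$ is a torsion group of exponent dividing $|\bar E|$, and it is a quotient of $Z^1\subseteq\mathrm{Map}(\bar E,M)$. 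We will use the divisibility of $P_0$ to show that every cocycle is cohomologous to one with values in the finite subgroup $M[|\bar E|^2]$. This divisibility step is where we expect the main obstacle, namely making the reduction to finitely many representatives rigorous.

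Finally, we combine the pieces. There are finitely many choices of $q$, of $\bar E\le\pi$, and of $E_0\le T$, and for each choice there are finitely many $P_0$-conjugacy classes. Therefore $G$ has finitely many conjugacy classes of elementary abelian subgroups, since $G$-conjugacy is coarser than $P_0$-conjugacy.
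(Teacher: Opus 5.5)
Your proposal is correct and is essentially the standard argument behind \cite[Lemma 1.4]{broto2007discrete}, which is all the paper offers as a proof. The argument has three parts: finitely many choices of the prime, of $\bar E\le\pi$ and of $E_0=E\cap P_0\le T$; then the $P_0$-classes of the resulting complements to $P_0/E_0$ in $H/E_0$ are bounded by $|H^1(\bar E;P_0/E_0)|$. Here $E_0$ is automatically central in $H=EP_0$, which is what makes $P_0/E_0$ an $\bar E$-module.

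The step you flag as the main obstacle closes in one line. Put $n=|\bar E|$ and $M=P_0/E_0$. If $nz=\delta m$, write $m=nm'$ using divisibility of $M$; then $z-\delta m'$ is a cohomologous cocycle with values in the finite group $M[n]$.
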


Let $\varphi\colon V\to \hat{G}$ where $\hat{G}$ is locally finite artinian as a colimit group of the simple complex of groups obtained from its structure, see Remark \ref{obs:countable}. Then $\varphi(V)\leq \hat{G}=\bigcup G(n)$ can also be described as a colimit via $\varphi(V)\cap G(n)$, and by finiteness, we see that $\varphi(V)\leq G(N)$ for some $N$. That is, any $\varphi\colon V\to \hat{G}$ factors through $G(N)$ for some $N$. If we consider 

\begin{equation*}
    \Phi\colon\colim_{n\in \mathbb{N}} \Rep(V,G(n))\to \Rep(V,\hat{G}),
\end{equation*}

we just showed that $\Phi$ is surjective. Now assume $[f] \in \Rep(V,G(n))$ and $[g] \in \Rep(V,G(m))$ such that $\Phi([f])=\Phi([g])$: that is, $f$ and $g$ are conjugate by an element $x\in \hat{G}=\cup G(n)$, that is $x\in G(k)$ for a big enough $k$. If we take $N$ bigger than $n$,$m$ and $k$, we see $f$ and $g$ are conjugate as morphisms into $G(N)$ and therefore they represent the same element in $\colim_{n\in \mathbb{N}} \Rep(V,G(n))$. We just proved that 

\begin{Lema}
    Under the previous assumptions, $\Phi$ is a bijection. Therefore, the $\textrm{End}(V)$-set $\colim_{n\in \mathbb{N}} \Rep(V,G(n))$ is finite.
\end{Lema}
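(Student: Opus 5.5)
The plan has two parts: show that $\Phi$ is a bijection, then deduce finiteness of the colimit from finiteness of $\Rep(V,\hat G)$.

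\textbf{Surjectivity.} Let $\varphi\colon V\to\hat G$ be a homomorphism. Since $V$ is finite, its image $\varphi(V)$ is a finite subset of $\hat G=\bigcup_n G(n)$. The $G(n)$ form an increasing chain, so $\varphi(V)\subseteq G(N)$ for some $N$. Hence $\varphi$ factors through $G(N)$, and its class is $\Phi$ of an element of $\Rep(V,G(N))$. Here I use that the canonical maps $G(n)\to\hat G$ are the inclusions, which holds because $\hat G$ is the union of the $G(n)$ as in Remark \ref{obs:countable}.

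\textbf{Injectivity.} Take $[f]\in\Rep(V,G(n))$ and $[g]\in\Rep(V,G(m))$ with $\Phi([f])=\Phi([g])$. Then there is $x\in\hat G$ with $g=c_x\circ f$ as maps into $\hat G$. The element $x$ lies in some $G(k)$. Let $N\ge n,m,k$. Pushed into $\Rep(V,G(N))$, the classes of $f$ and $g$ are conjugate by $x\in G(N)$, so they coincide in $\Rep(V,G(N))$. Therefore $[f]$ and $[g]$ define the same element of the colimit. This step needs a description of the colimit of sets over $\mathbb N$: two elements are equal in the colimit if and only if they become equal at some later stage. This holds because $\mathbb N$ is filtered. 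Compatibility with the $\End(V)$-action is immediate, since precomposition commutes with all the maps involved.

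\textbf{Finiteness.} It remains to see that $\Rep(V,\hat G)$ is finite. By the Proposition citing \cite{broto2007discrete}, $\hat G$ is a discrete toral group. By Proposition \ref{toralconjclass}, it has finitely many conjugacy classes of elementary abelian subgroups $E_1,\dots,E_r$. Every homomorphism $V\to\hat G$ has elementary abelian image. So up to conjugation it lands in some $E_j$, and thus comes from one of the finitely many sets $\Hom(V,E_j)$, each of which is finite. This gives a surjection
\[
\bigsqcup_{j=1}^r \Hom(V,E_j)\longrightarrow \Rep(V,\hat G),
\]
so $\Rep(V,\hat G)$ is finite, and hence so is the colimit.

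The main subtlety, rather than a real obstacle, is the identification of the set-theoretic colimit over $\mathbb N$ used in the injectivity step. Everything else is bookkeeping.
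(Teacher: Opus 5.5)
Your proposal is correct and follows the paper's argument essentially step for step. Surjectivity holds because the finite image of $V$ lies in some $G(N)$, and injectivity follows by enlarging $N$ until the conjugating element lies in $G(N)$. Two further points in your write-up only spell out what the paper leaves implicit in its ``Therefore'': the explicit deduction that $\Rep(V,\hat G)$ is finite, obtained from Proposition~\ref{toralconjclass} via the surjection from $\bigsqcup_j \Hom(V,E_j)$, and the remark that equality in a colimit over $\mathbb{N}$ means equality at some later stage.
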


\begin{Ex}
    Let $\hat{G}=D_{2^\infty}$ as a group colimit of the simple complex of groups $G\colon \mathbb{N} \to \textrm{InjGroups}$ defined by the dihedral groups $G(n)=D_{2^n}:=<r,s\mid r^{2^n}=s^2=1, srs=r^{-1}>$.
\end{Ex}

\begin{Obs}
    It is  known that the mod $p$ cohomology of locally finite artinian groups is Noetherian (see \cite[Proposition 12.1]{Dwyer-Wilkerson}). The proof uses a transfer argument and the Serre spectral sequence applied to the short exact sequence in Remark \ref{obs:countable}. In this section we have viewed this groups as a group colimit for a simple complex of groups indexed on $\mathbb{N}$. We are under the assumptions of Theorem \ref{main}, then $H^*(|\mathcal{P}|_{h\hat{G}})$ is a finitely generated $\mathbb{F}_p$-algebra. In this case the associated poset $\mathcal{P}$ is a tree; and we recover the finiteness result using the same methods. 
\end{Obs}

\subsection{Cohomology with p-adic coefficients}

Understanding mod $p$ cohomology and the torsion in integral cohomology allows to get results about the cohomology with $p$-adic coefficients. In \cite{Andersen_2012}, the authors formally established the relations between these qualitative properties, and we follow their strategy. The main result is the following.

\begin{Teo}\label{padicprincipal}
      Let $G$ be a simple complex groups over a finite poset, with $\varphi \colon \hat{G}\to K$ and an associated $K$-poset $\mathcal P$ defined as in Definition \ref{def:associatedGspace}. Then $H^*(|\mathcal{P}|_{hG};\hat{\mathbb{Z}}_p)$ is a Noetherian $\hat{\mathbb{Z}}_p$-algebra if and only if $H^*(|\mathcal{P}|_{hG};\mathbb{F}_p)$ is a Noetherian $\mathbb{F}_p$-algebra.  
\end{Teo}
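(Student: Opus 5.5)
The plan is to follow the strategy of \cite{Andersen_2012}. Write $X=|\mathcal P|_{hK}$; by Theorem \ref{mainHomotopy}, $X\simeq\hocolim_{\mathcal Q^{op}}BG(q)$. The bridge between the two coefficient rings is the short exact sequence $0\to\hat{\mathbb Z}_p\xrightarrow{p}\hat{\mathbb Z}_p\to\mathbb F_p\to0$. It gives a long exact Bockstein sequence
\[
\cdots\to H^n(X;\hat{\mathbb Z}_p)\xrightarrow{p}H^n(X;\hat{\mathbb Z}_p)\to H^n(X;\mathbb F_p)\xrightarrow{\delta}H^{n+1}(X;\hat{\mathbb Z}_p)\to\cdots,
\]
and hence an injection of rings $H^*(X;\hat{\mathbb Z}_p)/p\hookrightarrow H^*(X;\mathbb F_p)$ whose cokernel is the $p$-torsion $\delta(H^*(X;\mathbb F_p))$. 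By Remark \ref{Noetherian-fingen}, in both directions it suffices to prove finite generation as an algebra.

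\emph{($\Rightarrow$)} Assume $A=H^*(X;\hat{\mathbb Z}_p)$ is Noetherian. Then $A/p$ is a finitely generated $\mathbb F_p$-algebra. The torsion submodule $T=\ker(p)\subseteq A$ is an ideal of a Noetherian ring, so it is finitely generated as an $A$-module. The Bockstein sequence exhibits $H^*(X;\mathbb F_p)$ as an extension of $T$ (shifted by one degree) by $A/p$. Because $\delta$ is a derivation-like map of $A$-modules, we conclude that $H^*(X;\mathbb F_p)$ is a finitely generated module over the image of $A/p$, and therefore a finitely generated $\mathbb F_p$-algebra.

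\emph{($\Leftarrow$)} This is where the specific structure is used, and it is the main obstacle. Assume $H^*(X;\mathbb F_p)$ is Noetherian. The first step is to show that the positive-degree part of $H^*(X;\hat{\mathbb Z}_p)$ has a \emph{bounded exponent}: $p^e\cdot H^{n}(X;\hat{\mathbb Z}_p)=0$ for $n>0$ and a uniform $e$. For this, use the Bousfield--Kan spectral sequence with $E_2^{i,j}=\lim^i_{\mathcal Q}H^j(BG(q);\hat{\mathbb Z}_p)$. Each $G(q)$ is finite, so $H^j(BG(q);\hat{\mathbb Z}_p)$ for $j>0$ is annihilated by $|G(q)|$; take $e$ with $p^e$ dividing the $p$-part of $\prod_q|G(q)|$. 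The row $j=0$ is $H^*(|\mathcal Q|;\hat{\mathbb Z}_p)$, which is finitely generated since $\mathcal Q$ is finite. By Lemma \ref{lem:bounded limits} only finitely many columns are nonzero, so the abutment is an extension of at most $N+1$ pieces. Hence $p^{e(N+1)}$ kills everything except a finitely generated contribution from $H^*(|\mathcal Q|)$.

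Next, apply the Evens--Venkov type argument of Theorem \ref{main} with $R=\hat{\mathbb Z}_p$, via Theorem \ref{fgRalg}. Lift the finitely generated sub-$\mathcal A_p$-algebra $C$ of Proposition \ref{prop:finiteCmodule}, up to $p$-th powers, to elements of $H^{even}(X;\hat{\mathbb Z}_p)$; this is possible because $p$-th powers of classes in $H^{2*}(X;\mathbb F_p)$ lie in the image of reduction modulo $p^e$-torsion, by the bounded exponent. Let $C_{\mathbb Z}$ be the resulting finitely generated $\hat{\mathbb Z}_p$-algebra. Each $H^*(BG(q);\hat{\mathbb Z}_p)$ is then a finite $C_{\mathbb Z}$-module: it is finite modulo $p$ by Proposition \ref{prop:finiteCmodule}, and finite in general by a Nakayama argument for bounded $p$-torsion graded modules. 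The $E_2$-page of the $\hat{\mathbb Z}_p$ Bousfield--Kan spectral sequence is therefore a finite $C_{\mathbb Z}$-module, and as in the proof of Theorem \ref{main} so is $H^*(X;\hat{\mathbb Z}_p)$. This makes it a finitely generated $\hat{\mathbb Z}_p$-algebra.

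The delicate points are two. The first is ensuring that the lifts of the generators of $C$ exist integrally. I would handle this with Remark \ref{remark:Niso}, applied to the exponent $p^e$, which gives that kernel elements are nilpotent after $p$-power raising. The second is the Nakayama step, which requires passing through the filtration $p^kH^*$, of finite length thanks to the bounded exponent.
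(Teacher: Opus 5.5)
Your architecture differs from the paper's and most of it is sound, but the step you flag as delicate is not handled by the tool you name. In $(\Leftarrow)$ everything hinges on lifting $p$-power powers of the generators of $C$ along reduction $H^*(X;\hat{\mathbb Z}_p)\to H^*(X;\mathbb F_p)$, i.e.\ on showing that their Bockstein $\delta(x^{p^k})\in H^{*+1}(X;\hat{\mathbb Z}_p)$ vanishes. Remark \ref{remark:Niso} says nothing about this. It concerns nilpotence of the kernel of the edge map $\rho$ to $\lim_{\mathcal Q}H^*(BG(q))$, not the image of reduction mod $p$.

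What you need is the multiplicative behaviour of higher Bocksteins. The connecting map $\beta$ of $0\to\mathbb Z/p\to\mathbb Z/p^{j+1}\to\mathbb Z/p^j\to0$ obeys a Leibniz rule with respect to mod $p$ reductions. So for an even-degree class $a$ mod $p^j$ one gets $\beta(a^p)=p\,\bar a^{\,p-1}\beta(a)=0$, and $a^p$ lifts mod $p^{j+1}$. Iterating, $x^{p^e}$ is the reduction of some $z\in H^*(X;\mathbb Z/p^{e+1})$. Comparing the coefficient sequences for $p$ and $p^{e+1}$ gives $\delta(x^{p^e})=p^e\,\delta_{p^{e+1}}(z)$. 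This is $p^e$ times a torsion class, hence zero once $p^e$ kills the torsion of $H^*(X;\hat{\mathbb Z}_p)$.

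Relatedly, the ``bounded exponent'' you first assert is false. The row $H^*(|\mathcal Q|;\hat{\mathbb Z}_p)$ can contribute torsion-free classes in positive degrees: take all $G(q)$ trivial with $|\mathcal Q|\simeq S^1$, so $X\simeq S^1$. The correct statement is that the torsion is bounded. That is exactly Lemma \ref{padicLema}(2), and it is all the lifting argument needs. (Also $p^e$ should be divisible by the relevant $p$-parts of the $|G(q)|$, not divide them.)

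With these repairs your proof works, and the comparison is instructive. The paper proves only degreewise finite generation and bounded torsion via the Bousfield--Kan spectral sequence (Lemma \ref{padicLema}), then quotes \cite[Theorem 2.4]{Andersen_2012}.

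Your $(\Rightarrow)$ via the Bockstein sequence is a correct, fully general proof of one half of that theorem. Your $(\Leftarrow)$, by contrast, never uses its hypothesis. It runs through Proposition \ref{prop:finiteCmodule}, the modules $H^*(BG(q);\hat{\mathbb Z}_p)$ and a second pass through the spectral sequence. So it really proves directly that $H^*(X;\hat{\mathbb Z}_p)$ is Noetherian, which is consistent with Theorem \ref{main} but heavier than necessary.

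Given bounded torsion and degreewise finite generation, you can instead lift $p^e$-th powers of algebra generators of $H^*(X;\mathbb F_p)$ itself to get a finitely generated $C_{\mathbb Z}\subseteq A$. Then $A/pA\subseteq H^*(X;\mathbb F_p)$ is a finite $C_{\mathbb Z}$-module, and Nakayama over $\hat{\mathbb Z}_p$ in each degree gives $A$ finite over $C_{\mathbb Z}$. That is essentially the content of the cited theorem, which is why the paper can stop after Lemma \ref{padicLema}.
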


To prove this, we begin with the following lemma.

\begin{Lema}\label{padicLema}
    Under the assumptions of Theorem \ref{padicprincipal}, 
    \begin{enumerate}
        \item\label{one} $H^n(|\mathcal{P}|_{hG};\hat{\mathbb{Z}}_p)$ is a finitely generated $\hat{\mathbb{Z}}_p$-module, for every $n$.
        \item\label{two} $H^*(|\mathcal{P}|_{hG};\hat{\mathbb{Z}}_p)$ has bounded torsion.
    \end{enumerate}
\end{Lema}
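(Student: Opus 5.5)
We use the homotopy colimit description of Theorem \ref{mainHomotopy}, $|\mathcal P|_{hK}\simeq \hocolim_{\mathcal Q^{op}} BG(q)$. The Bousfield--Kan spectral sequence with $\hat{\mathbb Z}_p$ coefficients is
\[
E_2^{i,j}=\lim_{\mathcal Q}{}^i H^j(BG(q);\hat{\mathbb Z}_p)\Rightarrow H^{i+j}(|\mathcal P|_{hK};\hat{\mathbb Z}_p).
\]
Both statements will come from two facts: finiteness of each $H^j(BG(q);\hat{\mathbb Z}_p)$, and vanishing of the $E_2$-term outside finitely many columns.

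For \eqref{one}, first treat each local group. For $j>0$, the group $H^j(BG(q);\hat{\mathbb Z}_p)$ is a finitely generated $\hat{\mathbb Z}_p$-module, since $G(q)$ is finite. In fact it is a finite $p$-group, because it is annihilated by $|G(q)|$ (transfer) and finitely generated (Theorem \ref{fgRalg}, or directly from a finite-type resolution). For $j=0$ it is $\hat{\mathbb Z}_p$. Since $\mathcal Q$ is finite, the cochain complex $\prod_{q_0}H^j\leftarrow\prod_{q_0\to q_1}H^j\leftarrow\cdots$ computing the higher limits consists of finite products of finitely generated modules over the Noetherian ring $\hat{\mathbb Z}_p$. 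Hence every $E_2^{i,j}$ is finitely generated. This is the Remark after Lemma \ref{lem:bounded limits}.

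By Lemma \ref{lem:bounded limits}, $E_2^{i,j}=0$ for $i>N=\leng(\mathcal Q)+1$. Each total degree $n$ therefore meets only finitely many nonzero terms. Passing to subquotients, $E_\infty$ is finitely generated, and then $H^n$ is finitely generated via the finite filtration by extensions. One point needs care: the hocolim, Bousfield--Kan spectral sequence converges for $\hat{\mathbb Z}_p$ coefficients. This follows because the filtration is finite, so no $\lim^1$ issue arises.

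For \eqref{two}, the main step is a uniform bound on torsion. Let $M=\max_q |G(q)|_p$, the $p$-part of the orders. Then $M\cdot H^j(BG(q);\hat{\mathbb Z}_p)=0$ for all $j>0$ and all $q$, so $M\cdot E_2^{i,j}=0$ whenever $j>0$. In the row $j=0$, the coefficient system is the constant functor $\hat{\mathbb Z}_p$, so $E_2^{i,0}=H^i(|\mathcal Q|;\hat{\mathbb Z}_p)$. Because $\mathcal Q$ is finite, this is a finitely generated module with only finitely many nonzero degrees, and its torsion has some bounded exponent $M'$.

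Every element of $E_\infty^{i,j}$ is therefore killed by $MM'$. The filtration on $H^n$ has length at most $N+1$, so any torsion element of $H^n$ is killed by $(MM')^{N+1}$. The subtle point is that the torsion-free part coming from row $0$ could be extended by torsion in an unbounded way. I expect this to be the main obstacle, and I would handle it by the filtration argument just given. A torsion element $x\in F^iH^n$ has image in $E_\infty^{i,n-i}$ that is torsion, hence killed by $MM'$. Multiplying $x$ by $MM'$ pushes it down one filtration step, and after at most $N+1$ steps it vanishes. The exponent $(MM')^{N+1}$ is independent of $n$, which gives bounded torsion.
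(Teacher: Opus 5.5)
Your proposal is correct and follows essentially the same route as the paper. Both use the Bousfield--Kan spectral sequence over the finite poset $\mathcal Q$ with finitely many nonzero columns, finite generation of each $E_2^{i,j}$ over the Noetherian ring $\hat{\mathbb Z}_p$, a uniform exponent coming from the orders $|G(q)|$ in rows $j>0$ and from $H^*(|\mathcal Q|;\hat{\mathbb Z}_p)$ in row $0$, and then the finite filtration on $H^n$. Your explicit ``multiply by $MM'$ and drop one filtration level'' step is a more precise version of the paper's ``finitely many extensions''.

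There is one small slip. Not every element of $E_\infty^{i,j}$ is killed by $MM'$ (e.g.\ $E_\infty^{0,0}\cong\hat{\mathbb Z}_p$), only the torsion ones. To see that torsion in $E_\infty^{i,0}$ is still killed by $M'$, note that every differential into row $0$ starts in a row $j\ge 1$, so its image is killed by $M$; hence torsion classes in the quotient $E_\infty^{i,0}$ lift to torsion classes of $E_2^{i,0}$.
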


\begin{proof}
For the proof of the first statement (\ref{one}), recall that $|\mathcal{P}|_{hG} \simeq \hocolim_\mathcal{Q} BG(q)$, and then there is a spectral sequence $$\lim_\mathcal{Q}{ }^iH^j(BG(q); \hat{\mathbb{Z}}_p) \Rightarrow H^{i+j}(|\mathcal{P}|_{hG};\hat{\mathbb{Z}}_p).$$

    Assuming that $\mathcal{Q}$ is a finite poset, then we know that there exists $N\in \mathbb{N}$ such that $\lim_\mathcal{Q}{}^iH^j(BG(q); \hat{\mathbb{Z}}_p)=0$ for every $i>N$.

    Moreover, each $\lim_{p\in \mathcal{Q}}^i H^j(BG(q); \hat{\mathbb{Z}}_p)$ is a finitely generated  $\hat{\mathbb{Z}}_p$-module since $H^j(BG(q); \hat{\mathbb{Z}}_p)$ is so, with $\hat{\mathbb{Z}}_p$ being a Noetherian ring. Thus, in particular, $E_\infty^{i, j}$ is also a finitely generated $R$-module since the spectral sequence collapses at a finite stage.

    Then, in the case we are studying, by the convergence of the spectral sequence, we can affirm that for $H^n(|\mathcal{P}|_{hG};\hat{\mathbb{Z}}_p)$ there exists a filtration such that $$E_\infty ^{n,0}=F^0\subseteq F^1\subseteq \dots \subseteq F^{n-1} \subseteq F^n=H^n(|\mathcal{P}|_{hG};\hat{\mathbb{Z}}_p),$$ and for every $0<k\leq n$ we have $F^{k-1}\subseteq F^k \twoheadrightarrow E_\infty^{n-k, k}$.
    So, in particular, we have $$E_\infty ^{n,0}=F^0\subseteq F^1 \twoheadrightarrow E_\infty^{n-1, 1},$$ and as $E_\infty ^{n,0}$ and $E_\infty ^{n-1,1}$ are finitely generated $\hat{\mathbb{Z}}_p$-modules, we get that $F^1$ is also a finitely generated $\hat{\mathbb{Z}}_p$-module. Using this argument recursively, as the filtration is finite, and 
    $$F^n=H^n(|\mathcal{P}|_{hG};\hat{\mathbb{Z}}_p),$$
    we obtain that $H^n(|\mathcal{P}|_{hG};\hat{\mathbb{Z}}_p)$ is a $\hat{\mathbb{Z}}_p$-module finitely generated.

    In addition, to prove (\ref{two}), we use the fact that the integer cohomology of a finite group has bounded torsion:   $|G(q)|\cdot H^j(BG(q);\hat{\mathbb{Z}}_p)=0$ for $j>0$. Let $M:= lcm(|G(q)|,q\in \mathcal{Q})$. Then, $M\cdot E_2^{i,j}=0$, thus, $E_2^{i,j}$ has bounded torsion for $j>0$. When  $j=0$, $E_2^{i,0}=H^i(|\mathcal{Q}|;R)$ where $\mathcal{Q}$ is a finite poset, therefore the torsion is also bounded. Moreover, we know that we have a finite number of differentials, and $E_\infty^{i,j}$ has also bounded torsion, and then reconstructing $H^n(|\mathcal{P}|_{hG};\hat{\mathbb{Z}}_p)$ through the filtration as before, we get that it also has bounded torsion since there is a finite number of extensions to be solved, and this number is independent from $n$ because at most there is a number $N$ of $F^k\neq 0$, so $H^*(|\mathcal{P}|_{hG};\hat{\mathbb{Z}}_p)$ has bounded torsion.
\end{proof}

\begin{Obs}
    It is worth noting that one can obtain a bound on the torsion using the exponents of $H^n(BG(q);\hat{\mathbb{Z}}_p)$ and $H^n(|\mathcal{Q}|;\hat{\mathbb{Z}}_p)$ for all $n$ and the length of the poset $\mathcal{Q}$ since this last number measures the number of extensions needed to reconstruct the cohomology from the graded module associated to the filtration.
\end{Obs}

\begin{proof}[Proof of Theorem \ref{padicprincipal}]
By definition, $|\mathcal{P}|_{hG}$ is a connected space, and on Lemma \ref{padicLema}, we proved that $H^n(|\mathcal{P}|_{hG};\hat{\mathbb{Z}}_p)$ is a $\hat{\mathbb{Z}}_p$-module finitely generated. Then, $H^*(|\mathcal{P}|_{hG};\hat{\mathbb{Z}}_p)$ is Noetherian if, and only if, $H^*(|\mathcal{P}|_{hG};\mathbb{F}_p)$ is Noetherian and the torsion in $H^*(|\mathcal{P}|_{hG};\hat{\mathbb{Z}}_p)$ is bounded (see \cite[Theorem 2.4]{Andersen_2012}). But, by Lemma \ref{padicLema}, the torsion in $H^*(|\mathcal{P}|_{hG};\hat{\mathbb{Z}}_p)$ is bounded, so we get that $H^*(|\mathcal{P}|_{hG};\hat{\mathbb{Z}}_p)$ is Noetherian if, and only if, $H^*(|\mathcal{P}|_{hG};\mathbb{F}_p)$ is Noetherian.
\end{proof}

\bibliographystyle{alpha}
\bibliography{bibliography}

@article {BLO:homotopy_fusion_systems,
    AUTHOR = {Broto, Carles and Levi, Ran and Oliver, Bob},
     TITLE = {The homotopy theory of fusion systems},
   JOURNAL = {J. Amer. Math. Soc.},
  FJOURNAL = {Journal of the American Mathematical Society},
    VOLUME = {16},
      YEAR = {2003},
    NUMBER = {4},
     PAGES = {779--856},
      ISSN = {0894-0347,1088-6834},
   MRCLASS = {55R35 (20D15 55R40)},
  MRNUMBER = {1992826},
MRREVIEWER = {David\ J.\ Green},
       DOI = {10.1090/S0894-0347-03-00434-X},
       URL = {https://doi.org/10.1090/S0894-0347-03-00434-X},
}

@article {BK:Kac-Moody_groups,
    AUTHOR = {Broto, Carles and Kitchloo, Nitu},
     TITLE = {Classifying spaces of {K}ac-{M}oody groups},
   JOURNAL = {Math. Z.},
  FJOURNAL = {Mathematische Zeitschrift},
    VOLUME = {240},
      YEAR = {2002},
    NUMBER = {3},
     PAGES = {621--649},
      ISSN = {0025-5874,1432-1823},
   MRCLASS = {55R35 (22E65 55R40)},
  MRNUMBER = {1924024},
MRREVIEWER = {Lionel\ Schwartz},
       DOI = {10.1007/s002090100391},
       URL = {https://doi.org/10.1007/s002090100391},
}

@book {BH,
    AUTHOR = {Bridson, Martin R. and Haefliger, Andr\'{e}},
     TITLE = {Metric spaces of non-positive curvature},
    SERIES = {Grundlehren der mathematischen Wissenschaften [Fundamental
              Principles of Mathematical Sciences]},
    VOLUME = {319},
 PUBLISHER = {Springer-Verlag, Berlin},
      YEAR = {1999},
     PAGES = {xxii+643},
      ISBN = {3-540-64324-9},
   MRCLASS = {53C23 (20F65 53C70 57M07)},
  MRNUMBER = {1744486},
MRREVIEWER = {Athanase\ Papadopoulos},
       DOI = {10.1007/978-3-662-12494-9},
       URL = {https://doi.org/10.1007/978-3-662-12494-9},
}

@article {CS,
    AUTHOR = {Chach\'olski, Wojciech and Scherer, J\'er\^ome},
     TITLE = {Homotopy theory of diagrams},
   JOURNAL = {Mem. Amer. Math. Soc.},
  FJOURNAL = {Memoirs of the American Mathematical Society},
    VOLUME = {155},
      YEAR = {2002},
    NUMBER = {736},
     PAGES = {x+90},
      ISSN = {0065-9266,1947-6221},
   MRCLASS = {55P65 (18G55 55U30)},
  MRNUMBER = {1879153},
MRREVIEWER = {Timothy\ Porter},
       DOI = {10.1090/memo/0736},
       URL = {https://doi.org/10.1090/memo/0736},
}

@article {Dwyer-Wilkerson,
    AUTHOR = {Dwyer, W. G. and Wilkerson, C. W.},
     TITLE = {Homotopy fixed-point methods for {L}ie groups and finite loop
              spaces},
   JOURNAL = {Ann. of Math. (2)},
  FJOURNAL = {Annals of Mathematics. Second Series},
    VOLUME = {139},
      YEAR = {1994},
    NUMBER = {2},
     PAGES = {395--442},
      ISSN = {0003-486X,1939-8980},
   MRCLASS = {55R35 (55P35)},
  MRNUMBER = {1274096},
MRREVIEWER = {Haynes\ R.\ Miller},
       DOI = {10.2307/2946585},
       URL = {https://doi.org/10.2307/2946585},
}

@article {HLSmodulonilpotents,
    AUTHOR = {Henn, Hans-Werner and Lannes, Jean and Schwartz, Lionel},
     TITLE = {The categories of unstable modules and unstable algebras over
              the {S}teenrod algebra modulo nilpotent objects},
   JOURNAL = {Amer. J. Math.},
  FJOURNAL = {American Journal of Mathematics},
    VOLUME = {115},
      YEAR = {1993},
    NUMBER = {5},
     PAGES = {1053--1106},
      ISSN = {0002-9327,1080-6377},
   MRCLASS = {55S10 (55U99)},
  MRNUMBER = {1246184},
MRREVIEWER = {Donald\ M.\ Davis},
       DOI = {10.2307/2375065},
       URL = {https://doi.org/10.2307/2375065},
}

@article {Rector,
    AUTHOR = {Rector, D. L.},
     TITLE = {Noetherian cohomology rings and finite loop spaces with
              torsion},
   JOURNAL = {J. Pure Appl. Algebra},
  FJOURNAL = {Journal of Pure and Applied Algebra},
    VOLUME = {32},
      YEAR = {1984},
    NUMBER = {2},
     PAGES = {191--217},
      ISSN = {0022-4049,1873-1376},
   MRCLASS = {55R40 (55P45 55S10)},
  MRNUMBER = {741965},
MRREVIEWER = {Richard\ Kane},
       DOI = {10.1016/0022-4049(84)90051-3},
       URL = {https://doi.org/10.1016/0022-4049(84)90051-3},
}

@article {Thomason,
    AUTHOR = {Thomason, R. W.},
     TITLE = {Homotopy colimits in the category of small categories},
   JOURNAL = {Math. Proc. Cambridge Philos. Soc.},
  FJOURNAL = {Mathematical Proceedings of the Cambridge Philosophical
              Society},
    VOLUME = {85},
      YEAR = {1979},
    NUMBER = {1},
     PAGES = {91--109},
      ISSN = {0305-0041,1469-8064},
   MRCLASS = {18F25},
  MRNUMBER = {510404},
MRREVIEWER = {Daniel\ R.\ Grayson},
       DOI = {10.1017/S0305004100055535},
       URL = {https://doi.org/10.1017/S0305004100055535},
}

@article {WZ,
    AUTHOR = {Welker, Volkmar and Ziegler, G\"{u}nter M. and
              \v{Z}ivaljevi\'{c}, Rade T.},
     TITLE = {Homotopy colimits---comparison lemmas for combinatorial
              applications},
   JOURNAL = {J. Reine Angew. Math.},
  FJOURNAL = {Journal f\"{u}r die Reine und Angewandte Mathematik. [Crelle's
              Journal]},
    VOLUME = {509},
      YEAR = {1999},
     PAGES = {117--149},
      ISSN = {0075-4102,1435-5345},
   MRCLASS = {55P99 (05B30)},
  MRNUMBER = {1679169},
MRREVIEWER = {R.\ M.\ Vogt},
       DOI = {10.1515/crll.1999.035},
       URL = {https://doi.org/10.1515/crll.1999.035},
}

@incollection {lannes1987cohomologie,
    AUTHOR = {Lannes, J.},
     TITLE = {Sur la cohomologie modulo {$p$} des {$p$}-groupes ab\'eliens
              \'el\'ementaires},
 BOOKTITLE = {Homotopy theory ({D}urham, 1985)},
    SERIES = {London Math. Soc. Lecture Note Ser.},
    VOLUME = {117},
     PAGES = {97--116},
 PUBLISHER = {Cambridge Univ. Press, Cambridge},
      YEAR = {1987},
      ISBN = {0-521-33946-4},
   MRCLASS = {55S10 (20J05 55R35 55T15 55U99)},
  MRNUMBER = {932261},
MRREVIEWER = {J.\ F.\ Adams},
}

@article {Andersen_2012,
    AUTHOR = {Andersen, Kasper K. S. and Castellana, Nat\`alia and Franjou,
              Vincent and Jeanneret, Alain and Scherer, J\'er\^ome},
     TITLE = {Spaces with {N}oetherian cohomology},
   JOURNAL = {Proc. Edinb. Math. Soc. (2)},
  FJOURNAL = {Proceedings of the Edinburgh Mathematical Society. Series II},
    VOLUME = {56},
      YEAR = {2013},
    NUMBER = {1},
     PAGES = {13--25},
      ISSN = {0013-0915,1464-3839},
   MRCLASS = {55U20 (55N25 55R35)},
  MRNUMBER = {3021402},
MRREVIEWER = {J.\ M.\ Boardman},
       DOI = {10.1017/S0013091512000193},
       URL = {https://doi.org/10.1017/S0013091512000193},
}

@article {quillen,
    AUTHOR = {Quillen, Daniel},
     TITLE = {The spectrum of an equivariant cohomology ring. {I}, {II}},
   JOURNAL = {Ann. of Math. (2)},
  FJOURNAL = {Annals of Mathematics. Second Series},
    VOLUME = {94},
      YEAR = {1971},
     PAGES = {549--572; ibid. (2) 94 (1971), 573--602},
      ISSN = {0003-486X},
   MRCLASS = {57F10 (20J05 57D85)},
  MRNUMBER = {298694},
MRREVIEWER = {Larry\ Smith},
       DOI = {10.2307/1970770},
       URL = {https://doi.org/10.2307/1970770},
}

@book {Schwartz_1994,
    AUTHOR = {Schwartz, Lionel},
     TITLE = {Unstable modules over the {S}teenrod algebra and {S}ullivan's
              fixed point set conjecture},
    SERIES = {Chicago Lectures in Mathematics},
 PUBLISHER = {University of Chicago Press, Chicago, IL},
      YEAR = {1994},
     PAGES = {x+229},
      ISBN = {0-226-74202-4; 0-226-74203-2},
   MRCLASS = {55S10 (55S37 55T15)},
  MRNUMBER = {1282727},
MRREVIEWER = {Donald\ M.\ Davis},
}

@article {LANNES1989153,
    AUTHOR = {Lannes, Jean and Schwartz, Lionel},
     TITLE = {Sur la structure des {$A$}-modules instables injectifs},
   JOURNAL = {Topology},
  FJOURNAL = {Topology. An International Journal of Mathematics},
    VOLUME = {28},
      YEAR = {1989},
    NUMBER = {2},
     PAGES = {153--169},
      ISSN = {0040-9383},
   MRCLASS = {55S10},
  MRNUMBER = {1003580},
MRREVIEWER = {John\ C.\ Harris},
       DOI = {10.1016/0040-9383(89)90018-9},
       URL = {https://doi.org/10.1016/0040-9383(89)90018-9},
}

@article {Powell,
    AUTHOR = {Powell, Geoffrey},
     TITLE = {Finite presheaves and {$\mathcal{A}$}-finite generation of unstable
              algebras mod nilpotents},
   JOURNAL = {Ann. Inst. Fourier (Grenoble)},
  FJOURNAL = {Universit\'e{} de Grenoble. Annales de l'Institut Fourier},
    VOLUME = {69},
      YEAR = {2019},
    NUMBER = {5},
     PAGES = {2169--2204},
      ISSN = {0373-0956,1777-5310},
   MRCLASS = {55S10 (16T05 18F20)},
  MRNUMBER = {4018258},
MRREVIEWER = {J\'er\^ome\ Scherer},
       DOI = {10.5802/aif.3292},
       URL = {https://doi.org/10.5802/aif.3292},
}

@article {broto2007discrete,
    AUTHOR = {Broto, Carles and Levi, Ran and Oliver, Bob},
     TITLE = {Discrete models for the {$p$}-local homotopy theory of compact
              {L}ie groups and {$p$}-compact groups},
   JOURNAL = {Geom. Topol.},
  FJOURNAL = {Geometry \& Topology},
    VOLUME = {11},
      YEAR = {2007},
     PAGES = {315--427},
      ISSN = {1465-3060,1364-0380},
   MRCLASS = {55R35 (20D20 20J99 55S15)},
  MRNUMBER = {2302494},
MRREVIEWER = {Haynes\ R.\ Miller},
       DOI = {10.2140/gt.2007.11.315},
       URL = {https://doi.org/10.2140/gt.2007.11.315},
}

@book {Evens_1991,
    AUTHOR = {Evens, Leonard},
     TITLE = {The cohomology of groups},
    SERIES = {Oxford Mathematical Monographs},
      NOTE = {Oxford Science Publications},
 PUBLISHER = {The Clarendon Press, Oxford University Press, New York},
      YEAR = {1991},
     PAGES = {xii+159},
      ISBN = {0-19-853580-5},
   MRCLASS = {20J06 (20-02)},
  MRNUMBER = {1144017},
MRREVIEWER = {U.\ Stammbach},
}

@book {Matsumura89,
    AUTHOR = {Matsumura, Hideyuki},
     TITLE = {Commutative ring theory},
    SERIES = {Cambridge Studies in Advanced Mathematics},
    VOLUME = {8},
   EDITION = {Second},
      NOTE = {Translated from the Japanese by M. Reid},
 PUBLISHER = {Cambridge University Press, Cambridge},
      YEAR = {1989},
     PAGES = {xiv+320},
      ISBN = {0-521-36764-6},
   MRCLASS = {13-01},
  MRNUMBER = {1011461},
}

@book {May67,
    AUTHOR = {May, J. Peter},
     TITLE = {Simplicial objects in algebraic topology},
    SERIES = {Van Nostrand Mathematical Studies},
    VOLUME = {No. 11},
 PUBLISHER = {D. Van Nostrand Co., Inc., Princeton, N.J.-Toronto,
              Ont.-London},
      YEAR = {1967},
     PAGES = {vi+161},
   MRCLASS = {55.40},
  MRNUMBER = {222892},
MRREVIEWER = {A.\ K.\ Bousfield},
}

\end{document}